\newtheorem{theorem}{Theorem}[section]
\newtheorem{question}{Question}
\newtheorem{conjecture}[theorem]{Conjecture}
\newtheorem{lemma}[theorem]{Lemma}
\newtheorem{proposition}[theorem]{Proposition}
\newtheorem{example}[theorem]{Example}
\newtheorem{corollary}[theorem]{Corollary}
\newtheorem{remk}[theorem]{Remark}
\newenvironment{remark}{\begin{remk}

\begin{normalfont}}{\end{normalfont}
\end{remk}}
\def\FullBox{\hbox{\vrule width 8pt height 8pt depth 0pt}}
\def\qed{\ifmmode\qquad\FullBox\else{\unskip\nobreak\hfil
\penalty50\hskip1em\null\nobreak\hfil\FullBox
\parfillskip=0pt\finalhyphendemerits=0\endgraf}\fi}
\def\qedsketch{\ifmmode\Box\else{\unskip\nobreak\hfil
\penalty50\hskip1em\null\nobreak\hfil$\Box$
\parfillskip=0pt\finalhyphendemerits=0\endgraf}\fi}
\newenvironment{proofsketch}{\begin{trivlist} \item {\bf
Proof sketch:~}}
  {\qedsketch\end{trivlist}}
\title{A hidden signal in Hofstadter's $H$ sequence}
\date{}
\begin{document}

\author{Rodrigo Angelo}
\maketitle

\begin{abstract}

The Hofstadter $H$ sequence is defined by $H(1) = 1$ and $H(n) = n-H(H(H(n-1)))$ for $n > 1$. If $\alpha$ is the real root of $x^3+x=1$ we show that the numbers $\alpha H(n) \mod 1$ are not uniformly distributed on $[0,1]$, but converge to a distribution we believe is continuous but not differentiable. This is motivated by a discovery of Steinerberger, who found a real number with similar behavior for the Ulam sequence.

Our result is related with the fact that a certain sequence defined from the linear recurrence $h_n=h_{n-1}+h_{n-3}$ has the property $\|x h_n\| \rightarrow 0$ precisely for $x \in \mathbb{Z}[\alpha]$, a phenomenon we inquire for general linear recurrent sequences of integers.

\end{abstract}

\section{Introduction}
\

In ``A Hidden signal in the Ulam sequence" \cite{steinerberger2015hidden}, Steinerberger reports an empirical discovery in a classical sequence defined by Ulam. It is $u_1=1, u_2=2$, and then each new term is the smallest number that can be written uniquely as a sum of two distinct earlier terms. These rules generate a sequence that is chaotic and intractable --- little is known about it. But Steinerberger empirically found a real number $\alpha$ for which the distribution of $\alpha u_n \mod 1$ is not uniform in $[0,1]$, instead it looks like this:

\begin{figure}[h]
\centering
  \includegraphics[width=250pt]{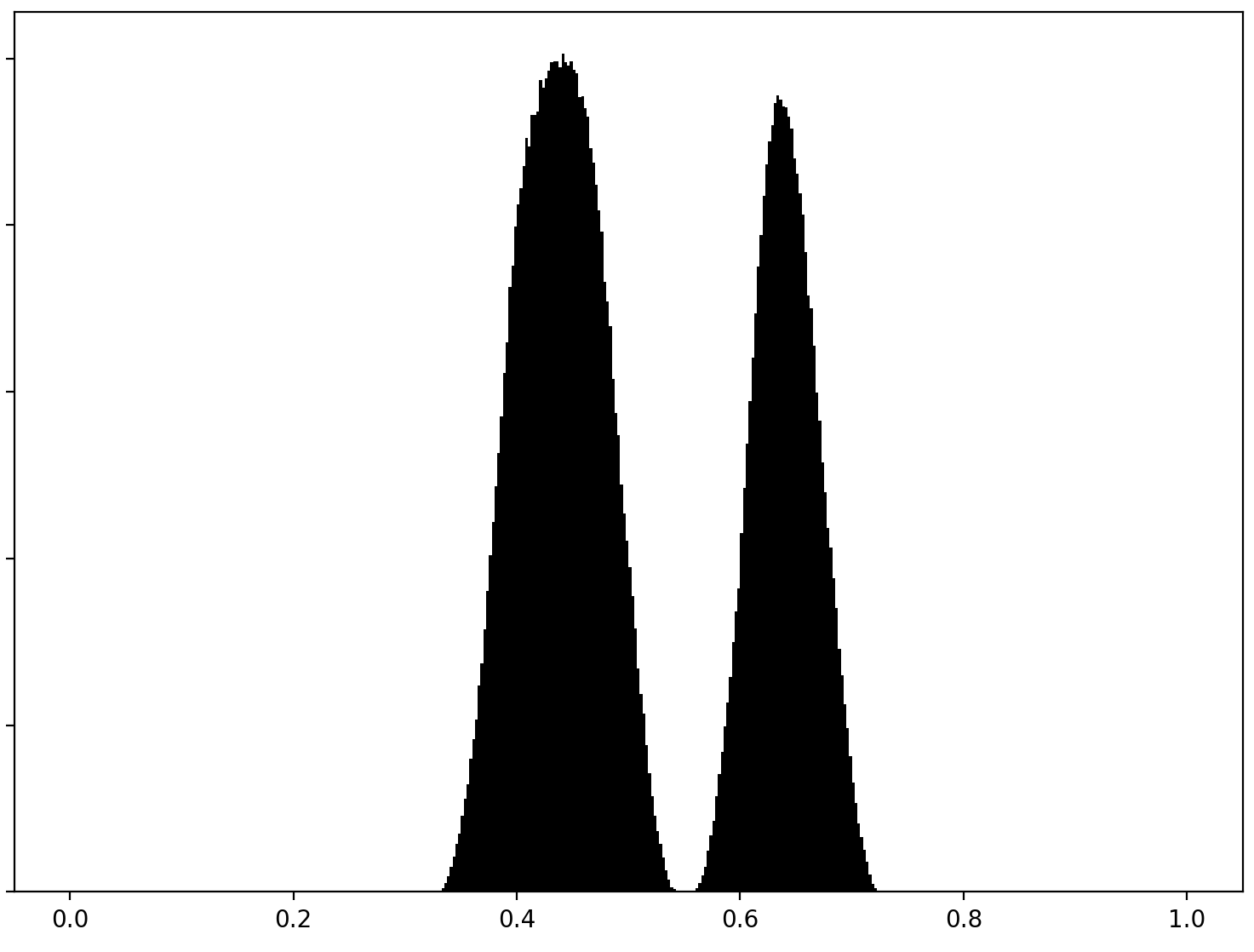}
  \caption{Histogram of $\alpha u_n \mod 1$ for $u_n$ up to $10^6$}
\end{figure}

Here $x \mod 1$ denotes the same as $\{x\}$ - the fractionary part of $x$. This is remarkable because for most naturally arising sequences, such as the natural numbers, the primes or the perfect squares, this distribution is uniform for every irrational number $\alpha$ (see \cite{davenport2000multiplicative, Montgomery1994TenLO}). There is also a theorem of Weyl stating that for any increasing sequence $a_n$ of natural numbers, the distribution of $\alpha a_n \mod 1$ is uniform for almost every real number $\alpha$. 

His method for finding this $\alpha$ was to consider the exponential sums

$$f(x)= \sum_{n \le T} e(x u_n) = \sum_{n \le T} e^{2 \pi i x u_n} $$
and to find a value for $x$ for which $|f(x)| \gg T$ - which means the distribution of $x u_n \mod 1$ can't be uniform, otherwise the terms $e^{2 \pi i x u_n}$ would average out making $f(x) = o(T)$. One reason these sums are convenient to work with is that they can be computed quickly with help of the fast Fourier transform. Then the exceptional value of $\alpha$ looks like an uptick in the plot of the Fourier transform $f(x)$ - hence a ``signal".

He proposes the problem of finding other naturally arising sequences with a real number $\alpha$ such that the distribution of $\alpha a_n$ is not uniform but still absolutely continuous, which he expected to be rare property - this is the question that motivated us.

The Hofstadter $H$ sequence was introduced by Douglas Hofstadter in his beautiful book ``G\"odel, Escher, Bach: An Eternal Golden Braid" \cite{geb}. It is defined by means of a self-referential recursion, with $H(1) = 1$ and

$$H(n) = n - H(H(H(n-1)))$$
for $n > 1$. One can check that it is well-defined and that it grows about linearly, as $H(n) \sim \alpha n$, where $\alpha = 0.682327...$ is the real root of $x^3 + x = 1$. Notice the relationship between this polynomial and the formula defining $H$. Its first few terms are

$$1, 1, 2, 3, 4, 4, 5, 5, 6, 7, 7, 8, 9, 10, 10,...$$

We found that this sequence has a hidden signal at this same root $\alpha$, that is, the distribution of $\alpha H(n) \mod 1$ is not uniform. Rather, this is a picture of it:

\begin{figure}[h]\label{fig:h3fig}
\centering
  \includegraphics[width=250pt]{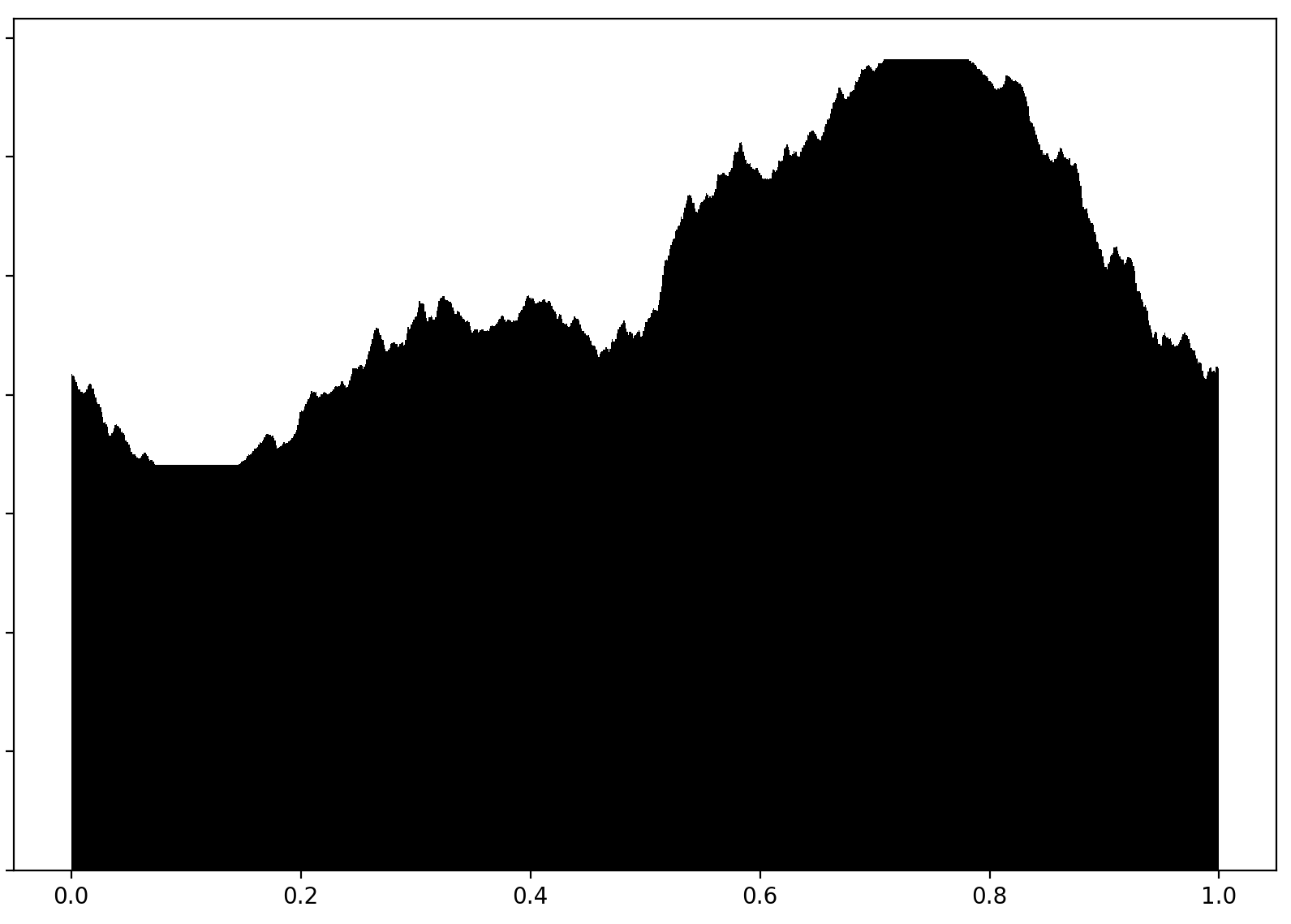}
  \caption{Histogram of $\alpha H(n) \mod 1$ for $n$ up to $10^7$}
\end{figure}

This was found experimentally, looking for spikes in the Fourier transforms of a number of sequences we suspected could carry a hidden signal. Because the $H$ sequence is not nearly as intractable as the Ulam sequence, we manage to explain with proof what is behind this. The following theorem summarizes our results about this sequence:

\begin{theorem} \label{alphadist}

Let $H$ be Hofstadter's $H$ sequence, $\alpha$ be the real root of $x^3+x=1$ and $\beta$ be an irrational number. Then:

\begin{itemize}

\item If $\beta \in \mathbb{Q}[\alpha]$, the distribution of the numbers $\beta H(n) \mod 1$ converges to an absolutely continuous probability measure on the circle.

\item If $\beta \notin \mathbb{Q}[\alpha]$ the distribution of the numbers $\beta H(n) \mod 1$ converges to the uniform distribution in the circle.

\end{itemize}

\end{theorem}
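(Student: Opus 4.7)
The plan is to reduce the theorem to combinatorial and harmonic statements about the auxiliary sequence $h_n$, using a Zeckendorf-type representation of the natural numbers in ``base'' $\{h_n\}$. Specifically, the first step is to prove that every positive integer $n$ has a unique representation
$$n = h_{i_1} + h_{i_2} + \cdots + h_{i_k}, \qquad 1 \leq i_1 < i_2 < \cdots < i_k,$$
subject to a gap constraint dictated by the recurrence $h_n = h_{n-1} + h_{n-3}$ (so that the greedy decomposition is unambiguous), and that $H$ acts on this representation by shifting every index down by one: $H(n) = h_{i_1-1} + h_{i_2-1} + \cdots + h_{i_k-1}$. Both facts should follow by induction on $n$ from the defining recursion $H(n) = n - H(H(H(n-1)))$, combined with a careful analysis of how the representation changes when $n$ is incremented. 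Small cases already display the pattern: for instance $5 = h_5 + h_2 = 4+1$ gives $H(5) = h_4 + h_1 = 3+1 = 4$, matching the table in the introduction.

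With this representation in hand, $\beta H(n) \equiv \sum_j \beta h_{i_j - 1} \pmod{1}$, and the limiting distribution is characterized via Weyl's criterion by the Fourier coefficients
$$c_k = \lim_{N \to \infty} \frac{1}{N}\sum_{n \leq N} e(k \beta H(n)),$$
whose summands factor as $\prod_j e(k\beta h_{i_j - 1})$. Grouping the representations of integers $n \leq N$ by an appropriate tree or transfer-matrix structure reduces the computation of $c_k$ to the spectral analysis of a transfer matrix whose entries are twisted by the phases $e(k\beta h_m)$.

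When $\beta \notin \mathbb{Q}[\alpha]$, every nonzero multiple $k\beta$ also lies outside $\mathbb{Q}[\alpha]$, so by the stated characterization that $\|x h_m\| \to 0$ holds precisely for $x \in \mathbb{Z}[\alpha]$ the phases $e(k\beta h_m)$ remain bounded away from $1$ along infinitely many $m$, producing nontrivial cancellation. This should force the spectral radius of the twisted transfer matrix to be strictly smaller than that of the untwisted one, giving $c_k = 0$ for every nonzero $k$ and hence uniform distribution by Weyl.

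When $\beta \in \mathbb{Q}[\alpha]$, write $\beta = y/q$ with $y \in \mathbb{Z}[\alpha]$ and $q$ a positive integer; then $q\beta h_m = y h_m$ is exponentially close to an integer, so $\beta h_m = r(m)/q + \eta_m$ where $r(m)$ is eventually periodic in $m$ and $\eta_m = O(|\rho|^m)$ for the complex conjugate roots $\rho, \bar\rho$ of $x^3 - x^2 - 1$. The distribution of $\beta H(n) \bmod 1$ should then decompose as the convolution of a discrete law on $\{0, 1/q, \ldots, (q-1)/q\}$, arising from the statistics of $r(i_j - 1) \bmod q$ in the representations of $n \leq N$, with the law of a tail correction series $\sum_j \bigl(C \rho^{i_j - 1} + \bar C \bar\rho^{i_j - 1}\bigr)$. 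The main obstacle is proving absolute continuity of this limit: the tail is a self-similar random variable associated to the cubic Pisot number $1/\alpha$, and absolute continuity for such objects is notoriously subtle, consistent with the authors' remark that the density appears to be continuous but not differentiable. I would attempt this either by exhibiting the density through its Fourier series, controlling the decay of the coefficients via the subdominant eigenvalues of the twisted transfer matrix, or by invoking results in the spirit of Solomyak on absolute continuity of self-similar measures with algebraic contraction ratios.
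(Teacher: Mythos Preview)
Your setup and the uniform-distribution half are essentially the same as the paper's: the Narayana-base representation, the shift formula for $H$, Weyl's criterion, and the observation that when $k\beta \notin \mathbb{Z}[\alpha]$ the phases $e(k\beta h_m)$ stay bounded away from $1$ infinitely often. The paper does not package this as a transfer matrix but as a scalar recursion $x_n = \frac{n-a_k}{n}\,e(\beta b_k)\,x_{n-a_k} + \frac{a_k}{n}\,x_{a_k}$ on the partial Weyl sums, and then argues directly that infinitely many non-aligned phases force $|x_n|\to 0$; your spectral-radius language is a repackaging of the same mechanism. For existence of the limit when $k\beta \in \mathbb{Z}[\alpha]$ the paper likewise shows the recursion limits to the linear one $x_n = \alpha x_{n-1} + \alpha^3 x_{n-3}$, whose characteristic polynomial has a root at $1$ and two roots inside the disk; this is what your ``untwisted transfer matrix'' would give.

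Where you diverge sharply is absolute continuity. You frame it as a hard problem about a self-similar measure with Pisot contraction ratio and reach for Fourier-decay estimates or Solomyak-type machinery. The paper sidesteps all of this with an elementary counting argument that has nothing to do with the fine structure of the measure: from the shift formula one gets $H(n) = \alpha n + O(1)$, so $H(n) \le Bn$ and each integer is hit at most $C$ times. Since $\beta$ is irrational, the multiples $\beta m$ for $m \le Bn$ are equidistributed, so at most $\approx 2BCn|I|$ of the values $\beta H(1),\dots,\beta H(n)$ can land in any interval $I$. Passing to the limit gives $\mu(I) \le 4BC|I|$ for every interval, hence $\mu \ll dx$ with a bounded density. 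Your route via self-similar measures might eventually work, but it is far more delicate (and the paper explicitly says it cannot control the Fourier decay rate), whereas the actual argument is two lines once you notice that $H$ is linear-growth with bounded multiplicity.
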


Furthermore, we conjecture that the distribution arising from the first case $\beta \in \mathbb{Q}[\alpha]$ is never uniform, establishing it in the case $\beta = \alpha$ that is pictured above.

We also investigate the $d$-th Hofstadter sequence - defined by the same recursion as $H$, except with $d$ iterations of $H(... H(n-1) ...)$ instead of $3$. Their behavior is somewhat surprising:

\begin{theorem}

Let $H$ denote the $d$-th Hofstadter sequence. For $2\le d \le 5$, the distribution of the numbers $\alpha H(n) \mod 1$ is non-uniform and absolutely continuous, where $\alpha$ is the positive real root of $x^d+x=1$. For $d\ge 6$ however, $\beta H(n) \mod 1$ is uniformly distributed for every irrational $\beta$.

\end{theorem}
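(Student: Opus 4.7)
The plan is to push the approach of Theorem~\ref{alphadist} through for general $d$, built around the linear recurrence $h_n = h_{n-1} + h_{n-d}$, whose characteristic polynomial $x^d - x^{d-1} - 1$ has dominant real root $\lambda = 1/\alpha$. First I would set up a Zeckendorf-type numeration in which every positive integer has a unique representation $n = h_{i_1} + \cdots + h_{i_k}$ subject to an admissibility condition on the gaps $i_{j+1} - i_j$, and then verify by induction that the $d$-fold recursion defining $H$ corresponds to an index shift, so that $H(n) = h_{i_1 - 1} + \cdots + h_{i_k - 1}$. This reduces the distribution of $\beta H(n) \mod 1$ to a digit sum $\sum_j \{\beta h_{i_j - 1}\}$, turning the problem into an analysis of the fractional parts $\{\beta h_m\}$ and the way they combine as the Zeckendorf digits of $n$ vary.

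For $2 \le d \le 5$, the plan is to verify that $\lambda$ is a Pisot number, i.e., that the remaining roots of $x^d - x^{d-1} - 1$ lie strictly inside the unit disk; this would be checked by direct estimates generalizing the $d = 3$ case. Given the Pisot property, $h_n = c\lambda^n + O(\rho^n)$ with $\rho < 1$, and since $\alpha \lambda^n = \lambda^{n-1}$ is an algebraic integer in $\mathbb{Z}[\alpha]$ whose Galois conjugates tend to zero, one concludes that $\|\alpha h_n\| \to 0$ exponentially. Feeding this into the digit-sum expression for $\alpha H(n)$ and following the template of Theorem~\ref{alphadist} yields a limiting probability measure expressible as an infinite Bernoulli-like convolution on the circle; the rapid geometric decay of $\{\alpha h_m\}$ supplies enough Fourier decay to conclude absolute continuity, and non-uniformity is witnessed by exhibiting a single nonzero Fourier coefficient, most naturally the one arising from the leading geometric contribution.

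For $d \ge 6$, the polynomial $x^d - x^{d-1} - 1$ acquires at least one conjugate root of modulus strictly greater than one; this has to be established rigorously, for example by a Rouché argument on a circle $|z| = 1 + \epsilon$ or by a numerical-and-interval verification combined with irreducibility of the polynomial. Once $\lambda$ is known not to be Pisot, the subdominant expanding roots inject an exponentially growing oscillating term into $h_n$ that prevents $\{\beta h_n\}$ from concentrating for any irrational $\beta$, and combining this with the Zeckendorf decomposition of $H(n)$ should give uniform distribution of $\beta H(n) \mod 1$ via Weyl's criterion, reducing matters to the bound $\sum_{n \le N} e(k \beta H(n)) = o(N)$ for every nonzero $k \in \mathbb{Z}$. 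The main obstacle sits exactly here: obtaining genuine cancellation over admissible digit patterns \emph{uniformly} in every irrational $\beta$ is strictly stronger than merely ruling out concentration, and it seems to require exploiting that the Zeckendorf digits are driven by an odometer dynamical system together with the oscillatory contribution of the non-Pisot conjugates to $h_n$. A secondary but concrete difficulty is pinning down the threshold $d = 6$ itself, which amounts to tracking the root locus of $x^d - x^{d-1} - 1$ as $d$ grows and certifying that the transition out of the Pisot regime happens precisely there.
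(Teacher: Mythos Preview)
Your framework for $2\le d\le 5$ is essentially the paper's, though your route to absolute continuity via Fourier decay of a Bernoulli-type convolution is not what the paper does and is in fact left open there: the paper explicitly cannot determine the decay rate of $\hat\mu(n)$ and instead proves absolute continuity by an elementary counting argument (Lemma~\ref{absolute continuity}), using only that $H(n)=O(n)$ and that each value is attained boundedly often.

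The genuine gap is in the $d\ge 6$ case. You assert that once $\lambda=1/\alpha$ fails to be Pisot, the extra large root ``prevents $\{\beta h_n\}$ from concentrating for any irrational $\beta$.'' This is false in general: the paper gives explicit examples (e.g.\ the recurrence whose characteristic polynomial is the minimal polynomial of $\sqrt 2+\sqrt 3$, with two roots outside the unit disk) where an irrational $\beta$ such as $\sqrt 6$ still satisfies $\|\beta a_n\|\to 0$. The correct characterization (Theorem~\ref{alpharecurence}) is that the set of such $\beta$ is the fixed field of the subgroup of $\mathrm{Gal}(K/\mathbb Q)$ generated by the ``big'' automorphisms, those sending one root of modulus $\ge 1$ to another. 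To force this fixed field down to $\mathbb Q$ the paper needs two further nontrivial inputs you have not anticipated: that $x^d-x^{d-1}-1$ (or its degree-$(d-2)$ irreducible factor when $d\equiv 5\pmod 6$) has Galois group the full symmetric group, obtained by invoking Selmer on irreducibility of trinomials and Osada on their Galois groups; and that with at least two large roots available, the big automorphisms then generate all of $S_d$ (respectively $S_{d-2}$). Without this Galois-theoretic step your argument cannot conclude that no irrational $\beta$ satisfies $\|\beta h_n\|\to 0$, and hence Theorem~\ref{uniform} cannot be invoked. Your stated ``main obstacle'' about Weyl-sum cancellation over admissible digit patterns is a red herring: once $\|\beta h_n\|\not\to 0$ is established, Theorem~\ref{uniform} handles the cancellation directly.
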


In these results the non-uniform behavior of $\beta H(n) \mod 1$  is always associated with an integer multiple of $\beta$ satisfying $\|\beta h_n \| \rightarrow 0$ for an attached linear recurrent sequence $\{h_n\}_{n \in \mathbb{N}}$ (where $\|.\|$ denotes distance to closest integer). We think the problem of determining the set of such $\beta$ for a given linear recurrence is independently interesting, and we study it with more generality. Under some assumptions we find that if $a_n$ is a linear recurrence, the set of real numbers $\beta$ with $\|\beta a_n \| \rightarrow 0$ is a ring of algebraic integers, that can be described with information about the Galois action of the characteristic polynomial of $\{a_n\}_{n \in \mathbb{N}}$ on its roots of absolute value bigger than $1$.

\
\

\section{Outline}
\

In section \ref{a little} we work out a closed formula for $H(n)$ related to Narayana's sequence $h_n$ and a characterization of the real numbers $\beta$ such that $\| \beta h_n \| \rightarrow 0$. We sketch how these properties may be used to establish the hidden signals.

Section 4 is technical - we formalize the work of the previous section, arriving at a criterion regarding the set of signals of a sequence with a similar closed formula. We apply this to the $H$ sequence to characterize its signals and to prove that a limit distribution of $x H(n) \mod 1$ exists at each real number $x$. In section 5 we overview additional applications of this technical criterion.

In section 6 we prove that the distributions on the circle arising from the signals of the $H$ sequence are in fact absolutely continuous. In section 7 we zoom in the picture of the distribution arising from the signal at $x = \alpha$ of the $H$ sequence, and explain some of its idiosyncratic properties.

In section 8 we characterize the signals of the $d$-th generalization of the $H$ sequence. This requires a study of the general problem of given a linear recurrence $a_n$ determining the numbers $\beta$ such that $\| \beta a_n \| \rightarrow 0$.

\section{A formula for $H(n)$} \label{a little}
\

In this section we work out the structure in the sequence $\{H(n)\}_{n \in \mathbb{N}}$ that will explain its signal.

Crucially, it has a closed formula in terms of the Narayana sequence, defined by $h_1 = 1, h_2 = 2, h_3 = 3$ and

$$h_k = h_{k-1}+h_{k-3}$$
for $k > 3$. This is a twist of the Fibonacci sequence originally defined by Narayana Pandita. Its first handful of terms:

$$1,2,3,4,9,13,19,28,41,60,...$$

For any natural number $n$ there is a canonical way of decomposing it as a sum of Narayana numbers (we shall call this the Narayana base representation of $n$). It is obtained by repeatedly subtracting the largest Narayana number below $n$ from it, until nothing is left. The Narayana numbers we obtain from this process are not only distinct, but at least $3$ indices apart from each other. This is because if $h_k \le n < h_{k+1}$, we will subtract $h_k$ from $n$ to obtain

$$n-h_k < h_{k+1}-h_{k} = h_{k-2},$$
hence if $h_k$ is used, neither is $h_{k-1}$ or $h_{k-2}$. This notion is analogous to the Zeckendorf representation of an integer, except with Narayana numbers instead of Fibonacci numbers.

Now the closed formula: $H(n)$ is equal to the right shift of the digits of $n$ in this representation ($h_1=1$ gets shifted to $h_0 = 1$). For example, $16 = 13+3 = h_6+h_3$, so $H(16) = h_5+h_2 = 9+2 = 11$. This closed formula can be proved inductively.

In turn, the Narayana sequence $\{h_k\}_{k \in \mathbb{N}}$ has a close relationship to the real root $\alpha$ of $x^3+x=1$:

\begin{lemma}\label{tendszeronara}

The numbers  $\| \alpha h_k \|$ decrease exponentially as $k \rightarrow \infty$. The same still holds if $\alpha$ is replaced by another number in $\mathbb{Z}[\alpha]$. Here $\|.\|$ denotes distance to the closest integer.

\end{lemma}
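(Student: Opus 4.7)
The plan is to run the classical Pisot-style argument inside the cubic number field $K = \mathbb{Q}(\alpha)$. The first step is to identify the characteristic polynomial of the Narayana recurrence $h_k = h_{k-1} + h_{k-3}$ as $x^3 - x^2 - 1$, whose real root is the plastic number $\beta := 1/\alpha$ and whose two complex roots $\gamma, \bar\gamma$ satisfy $\beta \cdot \gamma\bar\gamma = 1$, so $|\gamma| = \beta^{-1/2} < 1$. Since this polynomial is monic with constant term $\pm 1$, $\beta$ is a unit in $\mathcal{O}_K$; explicitly $\alpha = \beta^2 - \beta$ and $\beta = 1 + \alpha^2$, hence $\mathbb{Z}[\alpha] = \mathbb{Z}[\beta]$ and every $\mu \in \mathbb{Z}[\alpha]$ can be written as $\mu = a + b\beta + c\beta^2$ with $a,b,c \in \mathbb{Z}$.

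Next I would rewrite $h_k$ in the standard closed form
\[
h_k = \lambda\beta^k + \sigma_2(\lambda)\gamma^k + \sigma_3(\lambda)\bar\gamma^k = \mathrm{Tr}_{K/\mathbb{Q}}(\lambda\beta^k),
\]
where $\sigma_2,\sigma_3$ are the non-identity embeddings $K \hookrightarrow \mathbb{C}$ sending $\beta$ to $\gamma,\bar\gamma$, and $\lambda \in K$ is the unique element pinned down by $h_1,h_2,h_3$ (via the non-degeneracy of the trace pairing). For the fixed $\mu$ above, linearity of the trace gives
\[
\mathrm{Tr}(\mu\lambda\beta^k) = a\,h_k + b\,h_{k+1} + c\,h_{k+2} \in \mathbb{Z},
\]
while expanding the same trace through the three embeddings yields
\[
\mathrm{Tr}(\mu\lambda\beta^k) = \mu\lambda\beta^k + \sigma_2(\mu)\sigma_2(\lambda)\gamma^k + \sigma_3(\mu)\sigma_3(\lambda)\bar\gamma^k.
\]
Comparing the two expressions shows that $\mu\lambda\beta^k$ sits within $O(|\gamma|^k)$ of an integer. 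Since also $\mu h_k - \mu\lambda\beta^k = \mu\sigma_2(\lambda)\gamma^k + \mu\sigma_3(\lambda)\bar\gamma^k$ is $O(|\gamma|^k)$, it follows that $\mu h_k$ itself lies within $O(|\gamma|^k)$ of an integer, so $\|\mu h_k\| = O(|\gamma|^k)$ decays exponentially. Specializing to $\mu = \alpha$ recovers the first sentence of the lemma.

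The main point requiring attention is purely bookkeeping: verifying $\mathbb{Z}[\alpha] = \mathbb{Z}[\beta]$ (immediate from $\beta$ being a unit) and checking that $h_k = \mathrm{Tr}(\lambda\beta^k)$ for all $k$, which follows since both sides satisfy the same order-three recurrence and agree on three initial values. There is no genuine obstacle here—the whole argument rests on the fact that the non-real Galois conjugates of $\beta$ have absolute value less than $1$, which is the essence of the Pisot phenomenon and already encodes the exponential rate of decay.
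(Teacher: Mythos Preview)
Your proof is correct and follows essentially the same approach as the paper: both write $h_k$ in closed form with one dominant term $\lambda\beta^k$ (the paper writes this as $c_1\alpha^{-k}$) and two exponentially decaying terms, and both exhibit an explicit integer close to $\mu h_k$ as an integer linear combination of nearby $h_j$'s determined by the coefficients of $\mu$ in a power basis. The paper does this by the bare-hands cancellation $\alpha h_k - h_{k-1} = c_2(\alpha-\theta)\theta^{-k} + c_3(\alpha-\bar\theta)\bar\theta^{-k}$ and, for general $\mu = a_0 + a_1\alpha + \cdots \in \mathbb{Z}[\alpha]$, the combination $a_0 h_k + a_1 h_{k-1} + \cdots$; you package the identical computation in the trace formalism $\mathrm{Tr}(\mu\lambda\beta^k) = a h_k + b h_{k+1} + c h_{k+2}$, which is a bit more machinery than needed but perfectly sound.
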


\begin{proof}

The sequence $\{h_k\}_{k \in \mathbb{N}}$ has the closed formula

$$h_k = c_1 \alpha^{-k} + c_2 \theta^{-k}+c_3 \overline{\theta}^{-k},$$
where $\alpha$, $\theta$ and $\overline{\theta}$ are the roots of $x^3+x=1$. Notice that $|\alpha|$ is less than $1$, while $|\theta|=|\overline{\theta}|>1$. We can cancel out the leading term with

$$\alpha h_k - h_{k-1} = c_2 (\alpha - \theta) \theta^{-k} + c_2 (\alpha - \overline{\theta})\overline{\theta}^{-k},$$
which decreases exponentially. Because $h_{k-1}$ is an integer, this proves $\| \alpha h_k\|$ decreases exponentially, as desired.

For general $\beta \in \mathbb{Z}[\alpha]$, say $\beta = a_0+a_1\alpha+...+a_d \alpha^d$, we can obtain a similar cancellation for

$$\beta h_k -(a_0 h_k + a_1 h_{k-1} + ... + a_d h_{k-d}),$$
making $\beta h_k$ exponentially close to an integer.

\end{proof}

More notably, the sequence $\{h_k\}_{k \in \mathbb{N}}$ does not share this relationship with any number outside $\mathbb{Z}[\alpha]$:

\begin{lemma} If $\beta$ is a real number such that $\| \beta h_k \| \rightarrow 0$, then $\beta \in \mathbb{Z}[\alpha]$.

\end{lemma}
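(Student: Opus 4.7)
The plan is to reverse the previous lemma. Let $n_k$ be the nearest integer to $\beta h_k$, so $e_k := n_k - \beta h_k$ satisfies $|e_k|\le 1/2$ and $e_k\to 0$. Then
\[
n_k - n_{k-1} - n_{k-3} = \beta(h_k - h_{k-1} - h_{k-3}) + (e_k - e_{k-1} - e_{k-3}) = e_k - e_{k-1} - e_{k-3},
\]
an integer of absolute value less than $1/2$ for large $k$, hence $0$. So there is $k_0$ with $\{n_k\}_{k\ge k_0}$ satisfying the Narayana recurrence.

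Next I would show that any such integer solution takes the form $n_k = a h_k + b h_{k+1} + c h_{k+2}$ for integers $a,b,c$. Since both sides satisfy the recurrence, this reduces to solving $M_{k_0}(a,b,c)^T = (n_{k_0},n_{k_0+1},n_{k_0+2})^T$ over $\mathbb{Z}$, where $M_{k_0}$ is the $3\times 3$ Hankel-type matrix with $(M_{k_0})_{ij} = h_{k_0+i+j-2}$. A direct manipulation gives $M_{k+1}=S M_k$ where $S$ is the companion matrix of $x^3-x^2-1$; since $\det S = 1$, the value $\det M_{k_0}$ is independent of $k_0$, and computing it for one small $k_0$ shows the common value is $\pm 1$. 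Hence $M_{k_0}^{-1}$ has integer entries and the required $a,b,c$ exist.

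Using the closed form $h_k = c_1\alpha^{-k} + c_2\theta^{-k} + c_3\overline{\theta}^{-k}$ from the previous lemma, the combinations $h_{k+j}-\alpha^{-j}h_k$ are $O(|\theta|^{-k})$ and so tend to zero. Therefore
\[
\beta h_k - n_k = \bigl(\beta - a - b\alpha^{-1} - c\alpha^{-2}\bigr)h_k + o(1).
\]
The left side tends to zero while $h_k\to\infty$, so $\beta = a + b\alpha^{-1} + c\alpha^{-2}$. From $\alpha^3+\alpha=1$ one reads off $\alpha^{-1}=\alpha^2+1$ and $\alpha^{-2}=\alpha^2+\alpha+1$, both lying in $\mathbb{Z}[\alpha]$, so $\beta\in\mathbb{Z}[\alpha]$ as desired.

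The one mildly delicate step is the unimodularity of $M_{k_0}$: if instead $|\det M_{k_0}| = N > 1$, the argument would only force $N\beta\in\mathbb{Z}[\alpha]$, leaving $\beta$ in a fractional ideal rather than $\mathbb{Z}[\alpha]$ itself. That this does not happen is a pleasant feature of the Narayana recurrence, reflecting the constant term of $x^3-x^2-1$ having absolute value $1$.
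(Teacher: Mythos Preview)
Your proof is correct and takes a somewhat different, more direct route than the paper. Both arguments begin identically: the nearest-integer sequence $n_k$ eventually satisfies the Narayana recurrence. From there the paper proceeds in two stages, first using the closed form to show $\beta = c_1'/c_1 \in \mathbb{Q}[\alpha]$, and then running a separate argument (writing $\beta = a_0 + a_1\alpha + a_2\alpha^2$ with rational $a_i$, showing $a_0 h_k + a_1 h_{k-1} + a_2 h_{k-2}$ is eventually an integer, and tracing the recursion backwards) to upgrade to $\mathbb{Z}[\alpha]$. You instead stay over $\mathbb{Z}$ throughout: the unimodularity of the Hankel matrix $M_{k_0}$ lets you express $n_k$ as an \emph{integer} combination $a h_k + b h_{k+1} + c h_{k+2}$, whence $\beta = a + b\alpha^{-1} + c\alpha^{-2} \in \mathbb{Z}[\alpha^{-1}] = \mathbb{Z}[\alpha]$ in one stroke.

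Both approaches ultimately rest on the constant term of $x^3 - x^2 - 1$ being a unit---the paper uses it to run the recursion backwards, you use it to get $\det S = \pm 1$ and hence $\det M_{k_0} = \pm 1$---and you have correctly flagged this as the delicate point. Your packaging is cleaner and makes the structural reason (companion matrix unimodular) more visible; the paper's two-stage argument is perhaps more hands-on but slightly longer. A small stylistic note: your claim $M_{k+1} = S M_k$ is correct for a suitable companion matrix $S$, but since you only need $|\det M_{k_0}|$ constant, it would be just as clean to observe that adjacent $M_k$ differ by a row operation coming from the recurrence.
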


\begin{proof}

Let $h_k'$ be the closest integer to $\beta h_k$. Then

$$(h_k'-\beta h_k) -  (h_{k-1}' - \beta h_{k-1}) - (h_{k-3}' - \beta h_{k-3})$$
tends to zero. And $h_k = h_{k-1} + h_{k-3}$ so

$$h_k' - h_{k-1}' - h_{k-3}'$$
tends to zero. But this number is an integer, so $\{h_k'\}_{k \in \mathbb{N}}$ eventually satisfies the same linear recurrence as $\{h_k\}_{k \in \mathbb{N}}$.

Now as before write

$$h_k = c_1 \alpha^{-k} + c_2 \theta^{-k}+c_3 \overline{\theta}^{-k}.$$

The polynomial $P(x) = (x- \theta)(x - \overline{\theta})$ has coefficients in $\mathbb{Q}[\alpha]$ and we can use it to neutralize $c_2$ and $c_3$

$$h_2 - (\theta+\overline{\theta})h_1 + \theta \overline \theta h_0 = c_1 P(\alpha^{-1}),$$
which implies $c_1 \in \mathbb{Q}[\alpha]$. Similarly, $\{h_k'\}_{k \in \mathbb{N}}$  has the closed formula for large $k$

$$h_k' = c_1' \alpha^{-k} + c_2' \theta^{-k}+c_3' \overline{\theta}^{-k}.$$
where $c_1' \in \mathbb{Q}[\alpha]$.

Now remember $h_k'$ is the closest integer to $\beta h_k$, so

$$\beta = \lim_{k \rightarrow \infty} \frac{h_k'}{h_k} = \frac{c_1'}{c_1} \in \mathbb{Q}[\alpha].$$

To further narrow it down to $\mathbb{Z}[\alpha]$, write $\beta = a_0+a_1 \alpha + a_2 \alpha^2$, with $a_0, a_2, a_2 \in \mathbb{Q}$. As in the proof of lemma \ref{tendszeronara}, the quantity

$$\beta h_k - (a_0 h_k + a_1 h_{k-1} + a_2 h_{k-2})$$
tends to $0$. Tis means $a_0 h_k + a_1 h_{k-1} + a_2 h_{k-2}$ is  close to an integer for large $k$. But this is a rational number with bounded denominator, hence an integer for large $k$. These satisfy the linear recursion of a unit $x_{k-3} = x_{k}-x_{k-1}$, so tracing back the initial values are integers as well. For say $k = 2,3,4$ we get

\begin{center}
\begin{tabular}{ c }
 $a_0 + a_1 + 2a_2 \in \mathbb{Z}$ \\ 
 $a_0 + 2a_1 + 3a_2 \in \mathbb{Z}$ \\  
 $2a_0 + 3a_1 + 4a_2 \in \mathbb{Z},$ 
\end{tabular}
\end{center}
yielding that $a_0, a_1, a_2 \in \mathbb{Z}$, which completes the proof that $\beta = a_0 +a_1 \alpha + a_2 \alpha^2 \in \mathbb{Z}[\alpha]$.

\end{proof}

Let us return to the problem of understanding the distribution of $\beta H(n) \mod 1$. What we mean by the distribution of these numbers is the limit measure of the sequence of measures

$$\mu_n = \frac{1}{n} \sum_{i = 0}^{n-1} \delta(\beta H(i)),$$
where $\delta(x)$ denotes the Dirac probability measure on the circle, whose entire mass is concentrated at the point $x$.

The closed formula helps us understand this limit because it implies $\mu_{h_n}$ satisfies a recurrence, namely

$$\mu_{h_n} = \frac{h_{n-1}}{h_n} \mu_{h_{n-1}} + \frac{h_{n-3}}{h_{n}} S_{\beta h_{n-2} }(\mu_{n-3}),$$
where $S_x$ denotes the right shift of a measure by $x$. 

This follows from breaking down the terms that appear in the left into two ranges: for $k < h_{n-1}$, then the terms $\delta(\beta H(k))$ appear in $\frac{h_{n-1}}{h_n} \mu_{h_{n-1}}$. Otherwise, the Narayana base representation of $k$ contains the summand $h_{n-1}$, say $k = h_{n-1}+l$, where $l < h_{n-3}$. So $H(k) = h_{n-2} + H(l)$, and $\beta H(k)$ breaks down as $\beta h_{n-2} + \beta H(l)$, so these consist in terms of $\frac{h_{n-3}}{h_{n}} \mu_{n-3}$ shifted by $\beta h_{n-2}$.

The next step here is to look at the Fourier coefficients of these measures. By a version of a theorem of Weyl, which we will later state more rigorously, a sequence of probability measures on the circle $\mu_n$ converges to a measure if and only if for each integer $d$ the sequence of Fourier coefficients $\hat \mu_n (d)$ converges. And the limit measure is the measure whose $d$-th Fourier coefficient matches this limit for each $d$, so this limit measure is uniform if and only if the sequence of $d$-th Fourier coefficients converges to zero for each $d\neq 0$. So we are left with understanding these coefficients, which are the so called  Weyl sums of the sequence $\{\beta H(n)\}_{n \in \mathbb{N}}$. So fix an integer $d$ and take $d$-th Fourier coefficients of the recurrence above, so we get a sequence with the recurrence

$$\hat\mu_{h_n}(d) = \frac{h_{n-1}}{h_n} \hat\mu_{h_{n-1}}(d) + \frac{h_{n-3}}{h_{n-3}} e(d \beta h_{n-2}) \hat \mu_{n-3}(d).$$

So it boils down to an analysis of this recurrence. Let us for readability replace  $\hat\mu_{h_n}(d)$ with $x_n$. From here on we will provide a sketch of this analysis - the next section will deal with it more rigorously. The ratio $\frac{h_{n-1}}{h_n}$ quickly converges to $\alpha$, and $\frac{h_{n-3}}{h_n}$ to $\alpha^3$, so the recurrence is approximately

$$x_n = \alpha x_{n-1} + e(d \beta h_{n-2}) \alpha^3 x_{n-3}.$$

The analysis now breaks down into two cases, which explain the relevance of understanding for which $\beta$ one has $\|\beta h_{n}\| \rightarrow 0$
\\

\textbf{Case 1}: $ \| d \beta h_{n-2} \| \rightarrow 0$ 
\\

In this case  $e(d \beta h_{n-2}) \rightarrow 1$ and the recurrence becomes approximately

$$x_n = \alpha x_{n-1} + \alpha^3 x_{n-3},$$
which is guaranteed to have a limit, since it is a linear recurrence with characteristic polynomial $x^3=\alpha x^2+\alpha^3$, which has a root $x=1$ and two other roots with absolute value less than $1$. Such a sequence necessarily has a limit, so the limit of $x_n$ exists in this case.
\\

\textbf{Case 2}: $\| d \beta h_{n-2} \|$ doesn't converge to $0$
\\

This means $e(d \beta h_{n-2})$ is bounded by a constant away from $1$ infinitely often. So the natural triangle inequality

$$|x_n| = |\alpha x_{n-1} + e(d \beta h_{n-2}) \alpha^3 x_{n-3}| \le \alpha|x_{n-1}| + \alpha^3|x_{n-3}|$$
will be far from sharp infinitely often. Because $\alpha+\alpha^3 = 1$, this guarantees an approximately constant factor $<1$ decrease for $|x_n|$ infinitely often. Hence in this case $x_n$ tends to $0$.
\\

In either case we may conclude that for each $d$ the sequence of Fourier coefficients $\hat\mu_{h_n}(d)$ converges, hence the sequence of measures $\mu_{h_n}$ must converge (and because we know the coefficients tend to zero in some cases, we may in some cases conclude the limit measure is uniform).

In the next section we cary this analysis rigorously in a more general set up. Our approach in the next section also overcomes the issue that the limit was only stablished in the subsequence $\mu_{h_n}$.

\section{Writing in one base and reading as another}
\

In this section we will consider ``replacement sequences" $A(n)$ defined in the following manner: start out with two sequences $\{a_i\}_{i \in \mathbb{N}}$ and $\{b_i\}_{i \in \mathbb{N}}$. For each $n$, write it in greedily in ``base $a_i$" (we soon explain exactly what that means) and replace each appearance of an $a_i$ by the respective $b_i$ - the resulting number is defined to be $A(n)$. These generalize $H(n)$, that is the replacement sequence of $a_i = h_i$ by $b_i = h_{i-1}$. We will prove a criterion for deciding whether each real number $\alpha$ is a signal of $A(n)$ - roughly speaking the criteria only depends on $b_i$: if $\|\alpha b_i\|$ converges to $0$,  then $\alpha$ is a signal, whereas if $\|\alpha b_i\|$  doesn't converge to $0$, then $\alpha$ is not a signal.

We first need to lay down certain technical assumptions on the sequence $a_i$ to make our analysis more workable ($b_i$ is not required to satisfy any assumptions).

Let $a_i$ be increasing with $a_1=1$. Consider the process of writing the terms of the sequence itself in base $a_i$: for each $a_n$, greedly write it as a sum of some $a_{n-1}, a_{n-2},...$ always subtracting the largest $a_i\le a_{n-1}$ less than or equal to what is left (possibly more than once if necessary) until we get to zero - this is a kind of ``signature" of the sequence $a_i$ telling us how each $a_i$ relates to previous terms of the same sequence. Assume there exist a constant $L$ such that for each $n$ this process only uses $a_{n-1},..., a_{n-L}$, so it yields an equality

$$a_n = c_1a_{n-1}+...+c_La_{n-L}$$
for some integers $c_i \ge 0$. The coefficients $(c_1,..., c_L)$ are allowed to be different for different values of $n$. Assume also that $a_i$ satisfies a growth condition: there exist $r,s > 1$ such that for each $n$

$$ra_n < a_{n+1} < sa_{n}.$$

We will call a sequence $a_i$ satisfying those assumptions a sequence of ``mild signature". We borrow this use of the word ``signature" from \cite{hamlingnathanwebb}. Some examples of sequences of mild signature are:

\begin{itemize}

\item The sequence $a_i = b^{i-1}$ for $b \ge 2$, for which our notion of ``writing in base $a_i$" gives the traditional arithmetic base $b$. It's signature comes from $a_i = ba_{i-1}$.

\item The sequence $a_i = F_i$ of Fibonacci numbers, which gives the Zeckendorf coding. Lekkerkerker \cite{Lekkerkerker1951VoorstellingVN} was the first to study this coding, and proved that the greedy algorithm gives a representation of $n$ as a sum of non-consecutive Fibonacci numbers, and that this is the unique representation of $n$ with this property.


\item The Narayana sequence $a_i = h_i$, and indeed many other linear recurrent sequences with non-negative coefficients and $a_1 = 1$. The signatures will often come from the recurrence they satisfy - in many cases one can give some criteria under which these representations are unique, which makes these notions of bases even more compelling. These generalizations have been extensively studied, see for instance \cite{GRABNER199425, BUNDER201499, hamlingnathanwebb}. Uniqueness however won't be a concern in this paper - we just deal with the greedy representation.

\item The sequence $q_n$ of denominators of a real number $\theta = [a_1,a_2,...]$ with bounded continued fraction coefficients. This sequence satisfies the recurrence $q_n = a_n q_{n-1}+q_{n-2}$, which explains its mild signature. It also satisfies $\|\theta q_n \| \rightarrow 0$, which is sure to provide further connections.

\end{itemize}

We can now describe the criterion in detail. In both theorems below, $a_i$ is a sequence of mild signature, $b_i$ is any sequence of integers, and $A(n)$ is the replacement of $a_i$ by $b_i$ in the base $a_i$ representation of $n$.

\begin{theorem} \label{replace}

If $\alpha$ is a real number such that $\| \alpha b_i \| \rightarrow 0$ rapidly ($\sum \| \alpha b_i \| < \infty$ suffices), then the sequence of Weyl sums $x_n = \frac{1}{n} \sum_{i = 0}^{n-1} e(\alpha A(i))$ converges to some complex number.
\end{theorem}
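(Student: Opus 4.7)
The plan is to derive a recurrence for the partial Weyl sums along the subsequence $\{a_n\}$, argue convergence via a contraction-perturbation analysis, and then extend to arbitrary $n$.

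Set $S_n := \sum_{i = 0}^{a_n - 1} e(\alpha A(i))$ and $x_n := S_n/a_n$. Decomposing $[0, a_n)$ by the greedy base-$a$ expansion of each $i$, using the identity $a_n = \sum_k c_k^{(n)} a_{n-k}$ from the mild signature, one obtains
\[ S_n = \sum_{k=1}^L \Phi_k^{(n)} \sigma_k^{(n)} S_{n-k}, \qquad \sigma_k^{(n)} := \sum_{m=0}^{c_k^{(n)}-1} e(\alpha m b_{n-k}), \quad \Phi_k^{(n)} := \prod_{j < k} e(\alpha c_j^{(n)} b_{n-j}). \]
Dividing by $a_n$ gives $x_n = \sum_k w_k^{(n)} \rho_k^{(n)} x_{n-k}$, where $w_k^{(n)} := c_k^{(n)} a_{n-k}/a_n$ satisfies $\sum_k w_k^{(n)} = 1$ and $w_1^{(n)} \ge 1/s > 0$ (since $c_1^{(n)} \ge 1$ and $a_n < s\, a_{n-1}$), and $|\rho_k^{(n)} - 1| \le C \sum_{j=n-k}^{n-1} \|\alpha b_j\|$. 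Since the signature coefficients $c_k^{(n)}$ are uniformly bounded by the growth condition, the summability $\sum_j \|\alpha b_j\| < \infty$ guarantees that the total perturbation $\sum_n |\rho_k^{(n)} - 1|$ is finite.

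Convergence of $x_n$ then reduces to a general principle: a time-varying convex-combination recurrence with $w_1^{(n)} \ge \delta > 0$ and summable multiplicative perturbation has a convergent solution. Setting $\Delta_n := x_n - x_{n-1}$ and unfolding the recurrence gives $\Delta_n = -\sum_{j=1}^{L-1} V_j^{(n)} \Delta_{n-j} + \epsilon_n$ with $V_j^{(n)} := \sum_{k > j} w_k^{(n)} \le 1 - \delta$ and $\epsilon_n$ summable. A weak-ergodicity argument for the row-stochastic companion matrices of the unperturbed recurrence shows that the infinite product contracts all starting vectors to a common consensus, and absorbing the summable perturbation then gives $\sum_n |\Delta_n| < \infty$, so $x_n$ is Cauchy. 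For arbitrary $n$, one applies the same greedy base-$a$ decomposition to $n$ itself: $S_n/n$ becomes a convex-plus-perturbation combination of the values $x_{a_{j_l}}$ corresponding to the greedy expansion $n = \sum_l e_l a_{j_l}$, and converges to the same limit.

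The main obstacle is making the contraction/weak-ergodicity step precise: although $w_1^{(n)} \ge 1/s$ rules out oscillatory degeneracies like $x_n = x_{n-L}$, the other weights $w_k^{(n)}$ may vanish when the signature of $a_n$ skips index $n-k$, so the companion matrices are not uniformly positive in all entries. A careful induction is required: tracking the diameter of the sliding window $\{x_{n-1}, \ldots, x_{n-L}\}$ over blocks of length $\sim L$, one must show this diameter contracts by a definite multiplicative factor regardless of which indices are active at each step, and then verify that the summable errors $\epsilon_n$ cannot accumulate fast enough to destroy the contraction.
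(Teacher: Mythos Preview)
Your recurrence along the subsequence $\{a_n\}$ is set up correctly, and the perturbation bounds are right. The route, however, diverges substantially from the paper's. The paper does \emph{not} work with the full signature decomposition of $a_n$; instead it uses the simpler two--term recurrence
\[
x_n \;=\; \frac{n-a_k}{n}\, e(\alpha b_k)\, x_{n-a_k} \;+\; \frac{a_k}{n}\, x_{a_k},
\]
valid for \emph{every} $n$ (with $a_k$ the largest $a_i<n$). Convergence of the unperturbed version is then obtained by an elementary $\limsup/\liminf$ sandwich on the subsequence $y_{a_k}$: one shows that whenever $y_{a_k}$ is close to $a=\limsup y_{a_k}$, each of $y_{a_{k-1}},\dots,y_{a_{k-L}}$ is forced close to $a$ as well, and then the mild--signature relation propagates this lower bound forward to all later indices. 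The perturbation is handled by freezing the sequence to the exact recurrence past a cutoff $R$ and bounding the drift by a tail of $\sum_i\|\alpha b_i\|$. No matrix products or ergodicity enter.

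Your approach via weak ergodicity of the row-stochastic companion matrices is legitimate and can be completed: the condition $w_1^{(n)}\ge 1/s$ yields a Doeblin minorization---from any lag $j$ one reaches lag $1$ deterministically in $j-1$ steps and then stays there with probability at least $(1/s)^{L-j+1}$, so the $L$-step product has its first column bounded below by $(1/s)^L$, giving contraction of the window diameter by a factor $1-(1/s)^L$ every $L$ steps. Once you make this explicit, the summable perturbation is absorbed as you describe. What the paper's argument buys is that it avoids this machinery entirely and works directly with real $\limsup$'s, which also makes the later proof of Theorem~\ref{uniform} (where one must show the limit is zero) a near-verbatim reuse of the same inequalities; your matrix framework would need a separate argument there. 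Conversely, your formulation makes the role of the growth constant $s$ and the signature depth $L$ more transparent, and would adapt more readily if one wanted quantitative rates.
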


This limit is typically not $0$. In contrast:

\begin{theorem}\label{uniform}

If $\beta$ is a real number such that the sequence $\gamma_i = \|\beta b_i \|$ doesn't converge to $0$, then the sequence of Weyl sums $x_n = \frac{1}{n} \sum_{i = 0}^{n-1} e(\beta A(i))$ converges to 0.

\end{theorem}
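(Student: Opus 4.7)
Plan: Following the heuristic at the end of Section 3, my idea is to derive an explicit recursion for the partial Weyl sums along the subsequence $N = a_n$, and then argue that $\|\beta b_i\| \not\to 0$ prevents these sums from maintaining nontrivial magnitude.

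First, using the greedy signature $a_n = c_1^{(n)} a_{n-1} + \cdots + c_L^{(n)} a_{n-L}$, I would decompose $T_n := \sum_{i=0}^{a_n-1} e(\beta A(i))$ by grouping integers of $[0, a_n)$ according to their leading greedy digits. A straightforward induction on the signature yields
$$T_n = \sum_{j=1}^{L} e(\phi_{n,j})\, G_{n-j}\!\left(c_j^{(n)}\right)\, T_{n-j},$$
where $G_k(c) := \sum_{\ell=0}^{c-1} e(\ell\beta b_k)$ and $\phi_{n,j} := \sum_{\ell<j} c_\ell^{(n)} \beta b_{n-\ell}$. Normalising $\tau_n := T_n/a_n$ and using $\sum_j c_j^{(n)} a_{n-j} = a_n$, the recursion rewrites as a convex combination with phases:
$$\tau_n = \sum_{j=1}^L w_{n,j}\, \zeta_{n,j}\, \tau_{n-j}, \qquad w_{n,j} = \frac{c_j^{(n)} a_{n-j}}{a_n},$$
with $w_{n,j} \ge 0$, $\sum_j w_{n,j} = 1$, and damping factors $|\zeta_{n,j}| \le 1$ that are strictly $< 1$ precisely when $c_j^{(n)} \ge 2$ and $\|\beta b_{n-j}\|$ is bounded away from $0$ (by the Dirichlet-kernel formula for $G$).

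The core step is to show $\tau_n \to 0$. Set $M := \limsup |\tau_n|$ and suppose $M > 0$; pick $n_k \to \infty$ with $|\tau_{n_k}| \to M$. Because $\tau_{n_k}$ is an approximate convex combination of complex numbers of modulus $\le M + o(1)$, attaining $|\tau_{n_k}| \to M$ forces three things simultaneously: (i) $|\tau_{n_k - j}| \to M$ for every $j$ with $w_{n_k, j} > 0$; (ii) $|\zeta_{n_k, j}| \to 1$ there; and (iii) the arguments $\arg(\zeta_{n_k, j}\tau_{n_k-j})$ all align with $\arg(\tau_{n_k})$. By compactness and a diagonal argument, extract a sub-subsequence $k_l$ along which $\tau_{n_{k_l}+m}$ and $\beta b_{n_{k_l}+m} \pmod{1}$ converge for each fixed $m$, with limits $\tau_m^\infty$ and $\gamma_m^\infty$. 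Iterating (i) backward, $|\tau_m^\infty| = M$ for every $m \le 0$. Condition (ii) then gives a direct contradiction if any active $c_j^{(m)} \ge 2$ occurs with $\gamma_{m-j}^\infty \ne 0$; in the remaining all-$c_j \le 1$ case (which covers Narayana), chaining the alignment identities (iii) across consecutive indices forces $\gamma_m^\infty = 0$ for every $m \le -1$.

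Upgrading to a contradiction with $\|\beta b_n\| \not\to 0$ requires a quantitative backward propagation: the inequality $M - |\tau_{n-j}| \le w_{n,j}^{-1}(M - |\tau_n|) + o(1)$ propagates a deficit back over a window $[n_k - K, n_k]$, with the bound shrinking to zero once $M - |\tau_{n_k}|$ is small enough compared to the exponential amplification $w_{\min}^{-K}$. Given any bad index $n^*$ with $\|\beta b_{n^*}\| \ge \delta$, one chooses $k$ so that $n_k > n^*$ and $M - |\tau_{n_k}|$ is sufficiently small in the gap $n_k - n^*$, forcing $\|\beta b_{n^*}\|$ arbitrarily small and contradicting the bound $\delta$. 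Finally, $\tau_n \to 0$ extends to $x_N \to 0$ for general $N$ via the greedy decomposition $N = \sum d_i a_{k_i}$ with digits $d_i < s$ (from $a_{n+1} < s a_n$), giving $|T(N)|/N \le \max_i |\tau_{k_i}| + o(1) \to 0$. The main obstacle is the trade-off in the backward propagation—controlling how close $|\tau_{n_k}|$ must be to $M$ versus how many steps back we need to reach—especially in the all-$c_j \le 1$ case where strictness comes only from phase-alignment rigidity rather than Dirichlet-kernel damping.
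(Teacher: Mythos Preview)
Your overall strategy---set $M = \limsup|\tau_n|$, assume $M>0$, and derive $\|\beta b_n\|\to 0$ from the rigidity of near-equality in the convex combination---matches the paper's, and your recursion for $\tau_n$ is correct. But the step you yourself flag as the main obstacle is a genuine gap, not a technicality.

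The backward-propagation inequality $M-|\tau_{n-j}|\le w_{n,j}^{-1}(M-|\tau_n|)+o(1)$ amplifies the deficit by a bounded constant at each step, so after $K$ steps the bound is of order $w_{\min}^{-K}(M-|\tau_{n_k}|)$. To reach a fixed bad index $n^*$ from $n_k$ you need $K$ comparable to $n_k-n^*$; since the $n_k$ come from an arbitrary subsequence realizing the $\limsup$, there is no control on how fast $M-|\tau_{n_k}|$ decays relative to $w_{\min}^{n_k-n^*}$, and no reason any $n_k$ should land within bounded distance of a given $n^*$. Your compactness conclusion $\gamma_m^\infty=0$ for each fixed $m\le -1$ says only that along one sub-subsequence the values $\|\beta b_{n_{k_l}+m}\|$ tend to $0$; this is perfectly compatible with infinitely many bad indices sitting in the gaps between the $n_{k_l}$, so no contradiction follows.

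What the paper does instead is prove the stronger fact that $|\tau_n|\to M$ along the \emph{full} sequence, not just a subsequence, and the missing ingredient is a forward persistence step. From the convex inequality $|\tau_n|\le\sum_j w_{n,j}|\tau_{n-j}|$, if $L$ consecutive values $|\tau_{n-1}|,\ldots,|\tau_{n-L}|$ all lie below $M-\epsilon$ then so does $|\tau_n|$, and by induction every later value does too---contradicting $\limsup=M$. Hence every window of $L$ consecutive indices contains at least one $m$ with $|\tau_m|\ge M-\epsilon$. Now backward propagation need only run a \emph{bounded} number of steps (at most $L-1$, since $c_1^{(n)}\ge 1$ always lets you pass from $n$ to $n-1$), giving $|\tau_m|\ge M-C_L\epsilon$ for \emph{all} large $m$. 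With full-sequence convergence $|\tau_m|\to M$, the alignment argument applies at every large index and forces $e(\beta b_m)\to 1$ for all large $m$, directly contradicting the hypothesis; no subsequence extraction or compactness is needed.
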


The main ideas of the proof of these theorems are already in Section 3 - right now we are merely handling the technicalities generally. So we encourage the reader to skip straight to the corollaries at the end of this section on a first reading.

\begin{proof}[Proof of Theorem \ref{replace}]

Similarly to our sketch for the $H$ sequence, these Weyl sums satisfy a recurrence: if $a_k$ is the largest $a_i$ strictly less than $n$, by breaking down the numbers up to $n-1$ between the ones at greater or equal to $a_k$ and the ones less than $a_k$. Writing the ones greater or equal to $a_k$ as $a_k+m$ where $m$ is between $0$ and $n-1-a_k$, and keeping the ones less than $a_k$ as they are we get

$$n x_n = (n-a_k) e(\alpha b_k) x_{n-a_k} + a_k x_{a_k}.$$

Now because $\| \alpha b_k \|$ converges to $0$ fast, $e(d \alpha a_k)$ converges to 1 fast, so this recurrence is similar to a recurrence of the shape

\begin{equation}\label{exact}
x_n = \frac{(n-a_k)}{n} x_{n-a_k} + \frac{a_k}{n} x_{a_k},
\end{equation}
where from now on $a_k$ will be thought of as a function of $n$ - the largest $a_i$ less than $n$.

\begin{lemma}

If a sequence $\{y_n\}_{n \in \mathbb{N}}$ satisfies equation \ref{exact} for every large $n$, it converges.

\end{lemma}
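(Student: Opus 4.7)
The plan is to reduce convergence of $y_n$ to convergence of the subsequence $\zeta_k := y_{a_k}$, which I will then establish via a monotonicity-plus-contradiction argument exploiting the mild-signature bound on the look-back.

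First I would iterate equation \ref{exact}: expanding $y_{n - a_k}$ recursively by the same rule yields the additive formula
\[
y_n = \sum_j \frac{a_{k_j}}{n}\,\zeta_{k_j}, \qquad n = \sum_j a_{k_j},
\]
with the sum running over the greedy base-$a_i$ representation of $n$. Applied at $n = a_k$, the mild-signature hypothesis --- which forces the greedy representation of $a_k$ to involve only $a_{k-1}, \dots, a_{k-L}$, with coefficient $c_1 \geq 1$ on $a_{k-1}$ --- specialises to a finite-look-back convex combination
\[
\zeta_k = \sum_{i=1}^{L} \lambda_i^{(k)}\,\zeta_{k-i}, \qquad \lambda_i^{(k)} \geq 0,\ \sum_i \lambda_i^{(k)} = 1,\ \lambda_1^{(k)} \geq \frac{a_{k-1}}{a_k} > \frac{1}{s}.
\]

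To show $\zeta_k$ converges, I would set $M_k = \max_{k-L \leq j < k}\zeta_j$ and $m_k = \min_{k-L \leq j < k}\zeta_j$. Since $\zeta_k \in [m_k, M_k]$, the sequence $M_k$ is non-increasing and $m_k$ non-decreasing; write the limits as $M^*, m^*$, which satisfy $M_k \geq M^*$ and $m_k \leq m^*$ for every $k$. Suppose $\Delta := M^* - m^* > 0$ for contradiction, pick $\varepsilon$ much smaller than $\Delta(s-1)/s^L$, and choose $k_0$ large enough that $M_k \leq M^* + \varepsilon$ for $k \geq k_0$. The $\min$ defining $m_{k_0 + L}$ is realised at some $j \in [k_0, k_0 + L - 1]$ with $\zeta_j \leq m^* + \varepsilon$. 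Iterating
\[
\zeta_k \leq \lambda_1^{(k)}\zeta_{k-1} + (1 - \lambda_1^{(k)})M_k \leq \tfrac{1}{s}\zeta_{k-1} + \bigl(1 - \tfrac{1}{s}\bigr)(M^* + \varepsilon)
\]
inductively for $k = j+1, \dots, j+L-1$ gives $\zeta_{j+m} \leq M^* - \eta_m$ with $\eta_0 = \Delta - \varepsilon$ and $\eta_{m+1} \geq \eta_m/s - \varepsilon$; for our choice of $\varepsilon$ all $\eta_m$ stay positive. Hence $M_{j+L} = \max(\zeta_j, \dots, \zeta_{j+L-1}) \leq M^* - \eta_{L-1} < M^*$, contradicting $M_{j+L} \geq M^*$.

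With $\ell := \lim \zeta_k$ in hand, $y_n \to \ell$ follows from the additive formula: given $\varepsilon > 0$, choose $K$ with $|\zeta_k - \ell| < \varepsilon$ for $k \geq K$ and split
\[
y_n - \ell = \sum_{k_j \geq K}\frac{a_{k_j}}{n}(\zeta_{k_j} - \ell) + \sum_{k_j < K}\frac{a_{k_j}}{n}(\zeta_{k_j} - \ell).
\]
The first sum has modulus at most $\varepsilon$; for the second, the growth condition $a_{k+1} > r a_k$ gives $\sum_{i < K} a_i \leq a_K/(r-1)$, so the second sum is $O(a_K/n)$ and vanishes as $n \to \infty$. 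The main obstacle is the contradiction step, where one must exploit both that $\lambda_1^{(k)} > 1/s$ is uniformly bounded below (from $a_k < s a_{k-1}$) and that the look-back $L$ is finite (from the mild signature), since the effect of a single low $\zeta_j$ is attenuated by a factor $1/s$ per step but must still force all $L$ subsequent values below $M^*$ before $\zeta_j$ exits the window.
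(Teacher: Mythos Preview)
Your proof is correct and follows essentially the same route as the paper: unfold equation~\ref{exact} into the additive formula $y_n = \sum \frac{a_{k_j}}{n}\zeta_{k_j}$, reduce to convergence of the subsequence $\zeta_k = y_{a_k}$ via the bounded look-back convex-combination structure coming from the mild signature, then push convergence back to all $y_n$ using the geometric decay of the small-index tail. Your windowed $\max/\min$ forward-propagation argument for $\zeta_k$ is a clean variant of the paper's $\limsup$-then-backward-propagation argument; just note that both arguments tacitly require $y_n$ to be real (the paper handles this by treating real and imaginary parts separately), which you should state since you invoke $\max$ and $\min$.
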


\begin{proof}

We can assume without loss of generality that each $y_n$ is real by dealing separately with their complex and imaginary part. It follows inductively that the sequence $y_n$ is bounded, so let $a = \limsup y_{a_k}$. Notice that when we recursively apply the equation again to the $n-a_k$ term, this unravels the base $a_i$ representation of $n$. So if $n = \sum a_i$ is the sum obtained from this representation (with possibly repeated $a_i$'s), we get the equation

$$y_n = \sum \frac{a_i}{n} y_{a_i}.$$

Therefore for any $\epsilon>0$, the bound $y_n \le (a+\epsilon)$ holds for all large $n$ (since this inequality holds for $y_{a_i}$ for each large $a_i$, and from the growth condition the weights $\frac{a_i}{n}$ coming from small $a_i$ contribute to only a small proportion of the above sum. Here we use again that the $y_n$ are bounded). 

Now let $k$ be a large index with $y_{a_k} \ge (a-\epsilon)$. From the equation

$$y_{a_k} = \frac{(n-a_k)}{n} y_{a_k-a_{k-1}} + \frac{a_k}{n} y_{a_{k-1}}$$
together with $y_{a_k-a_{k-1}} \le (a+\epsilon)$ we get:

$$y_{a_{k-1}} \ge a - \epsilon \left (\frac{2a_{k}-a_{k-1}}{a_{k-1}} \right) \ge a - \epsilon(2s-1) $$
(using the growth condition $a_k < s a_{k-1}$). Apply this argument repeatedly to get $y_{a_{k-i}} \ge a - \epsilon (2s-1)^i $.

Now for any given $\delta > 0$ by choosing $\epsilon < \delta (2s-1)^{-L}$ we can find an index $k$ such that each $a_{k}, a_{k-1},..., a_{k-L}$ is at least $a - \delta$. Now for an arbitrary $n \ge k$ write $a_n$ greedily as a sum of previous $a_i$'s so

$$y_{a_n} = \sum \frac{a_i}{a_n} y_{a_i}.$$

Because $a_i$ has mild signature, each index $i$ that appears in the sum is inbetween $n-1,..., n-L$. From this equation, together with the fact that $\sum \frac{a_i}{a_n} = 1$ and $y_{a_n} \ge a-\delta$ for each $n$ in  $\{k, k-1,..., k-L\}$, it follows inductively that

$$y_{a_n} \ge a-\delta$$
for each $n \ge k$. Since this holds for any $\delta > 0$, we can conclude that $\liminf   y_{a_k} = a$ as well, that is, $\lim_{k \rightarrow \infty} y_{a_k} = a$ . Again, by writing arbitrary $n$ in base $a_i$ so

$$y_n = \sum \frac{a_i}{n} y_{a_i}$$
and using that the sum of the weights of the $a_i$ for small $i$ is negligible, we may conclude $y_n \rightarrow a$ as $n \rightarrow \infty$, as desired.

\end{proof}

That proved, we can go back to the original sequence $x_n$ which doesn't satisfy equation \ref{exact} exacly. Rewrite the recurrence for $x_n$ as:

\begin{equation} \label{drift}
x_n = \frac{(n-a_k)}{n} x_{n-a_k} + \frac{a_k}{n} x_{a_k} + \frac{n-a_k}{n}r_n,
\end{equation}
where

$$r_n = (1-e(\alpha b_k))x_{n-a_k},$$
which satisfies

$$|r_n| \le |(1-e(\alpha b_k))| \le \| \alpha b_k \|.$$

Here we used that $|x_n|\le 1$, which follows from $x_n$ being a Fourier coefficient of a probability measure. Now take some cutoff $R$ and consider an alternative sequence $y_n$, that for $n \le R$ is defined to be equal to $x_n$, and for $n > R$ is defined to satisfy the recurrence 

$$y_n = \frac{(n-a_k)}{n} y_{n-a_k} + \frac{a_k}{n} y_{a_k}$$
exactly. Equation \ref{drift} shows that $x_n$ and $y_n$ will drift apart only by a little bit (by about $r_n$) at each step. Indeed we have

$$|x_n-y_n| \le S_n,$$
where $S_n$ is the supremum of all sums of the shape

$$|r_{l_1}|+...+|r_{l_s}|$$
with $R \le l_1, l_m \le n$ and each $l_{i+1} \ge c l_i$, where $c = \frac{s}{s-1} > 1$. This can be proved inductively - we have

$$|x_n-y_n| \le \frac{n-a_k}{n}|r_n|+\frac{n-a_k}{n}|x_{n-a_k}-y_{n-a_k}| + \frac{a_k}{n}|x_{a_k}-y_{a_k}|$$ 

$$\le \frac{n-a_k}{n}|r_n|+\frac{n-a_k}{n}S_{n-a_k} + \frac{a_k}{n}S_{a_k}$$
from an induction hypothesis. And notice that that $\frac{s}{s-1} (n- a_k) < n$ (otherwise $a_k$ wouldn't be the biggest $a_i$ below $n$), so

$$|r_n|+S_{n-a_k} \le S_n.$$

And also $a_k \le n$ so $S_{a_k} \le S_n$, so we get

$$|x_n-y_n|\le \frac{n-a_k}{n} S_n + \frac{a_k}{n} S_n = S_n,$$
which completes the induction step. Now remember each $r_n$ satisfies $|r_n| \le \| \alpha b_k \|$, and in any $S_n$, each $\| \alpha b_k \|$ appears at most $\frac{rs}{s-1}$ times, since the $b_k$ of an $r_n$ comes by taking the largest $a_k$ below $n$, and the sequence of $n$'s in a list $l_1,l_2,..., l_s$ in $S_n$ grows at least exponentially by a factor of $\frac{s}{s-1}$, and the ratio between each $a_k$ and the next is by $r$. So $S_n$ is bounded by a constant times the tail after $R$ of the infinite sum:

$$\|\alpha b_1\|+\|\alpha b_2\|+...$$

So for any $\epsilon>0$ we can take $R$ large enough so that this constant times this tail less than $\epsilon$, as to get

$$|x_n - y_n| \le S_n < \epsilon$$
for every large $n$. But we proved previously that the sequence $y_n$ obtained from this $L$ has a limit. Because we can do this for any $\epsilon$, this proves that the sequence $x_n$ is a Cauchy sequence, therefore it converges, which completes the proof of Theorem \ref{replace}.

\end{proof}

\begin{proof}[Proof of Theorem \ref{uniform}]

Like in the proof of Theorem $\ref{replace}$ the sequence $x_n$ satisfies the recurrence

$$x_n = \frac{(n-a_k)}{n} e(\beta b_k) x_{n-a_k} + \frac{a_k}{n} x_{a_k}.$$

By taking absolute values and applying the triangle inequality we get

\begin{equation}\label{ineq}
|x_n| \le \frac{n-a_k}{n}|x_{n-a_k}| + \frac{a_k}{n}|x_{a_k}|.
\end{equation}

By recursively reapplying this inequality we get

$$|x_n| \le \sum \frac{a_i}{n} |x_{a_i}|,$$
where $n = \sum a_i$ is the representation of $n$ in base $a_i$. Again the $x_{n}$ are bounded, so let $a = \limsup |x_{a_k}|$. Again from the negligible contribution of small $a_i$ we may deduce that for any $\epsilon > 0$, the inequality $|x_n| \le a+\epsilon$ holds for every large $n$.

Let $n$ be large enough so that $|x_{a_k}| < a+ \epsilon$ for each $k \ge n-L$. By greedily writing $a_n =\sum a_i$ we get

$$|x_{a_n}| \le \sum \frac{a_{i}}{a_n}|x_{a_{i}}|,$$
where each $i$ is from $n-L, ..., n-1$. From this we deduce that if $L$ consecutive $a_i$'s satisfy $|x_{a_i}| < a - \epsilon$ for some $\epsilon > 0$, then the next $a_i$ also does. This would imply $x_{a_i} < a - \epsilon$ every large $i$, which would contradict $a = \limsup |x_{a_k}|$. So between any $L$ consecutive $a_i$'s, there is one with $|x_{a_i}| \ge a - \epsilon$. But also from the inequality

$$|x_{a_{k+1}}| \le \frac{a_{k+1}-a_k}{a_{k+1}}|x_{a_{k+1}-a_k}| + \frac{a_k}{a_{k+1}}|x_{a_k}|,$$
together with the fact that for large $k$ one has $|x_{a_{k+1}-a_k}| \le a + \epsilon$, and the fact that $\frac{a_k}{a_{k+1}} > \frac{1}{s}$, we get that if $|x_{a_{k+1}}| > a - \epsilon$, then $|x_{a_{k}}| > a - s\epsilon$, where $C$ is some absolute constant. We can repeat this a finite number of times to get that if $|x_{a_k}| > a -\epsilon$ then $|x_{a_{k-t}}| > a - \epsilon ts$. But in any range of $L$ consecutive $x_{a_i}$'s there is one with absolute value at least $a - \epsilon$. So for any $\delta > 0$ one has

$$|x_{a_k}| > a - \delta$$
for every large $k$, so in fact $\lim_{k \rightarrow \infty} |x_{a_k}|$ exists and is equal to $a$. Let's then prove $a = 0$.

Up to now we have only used the inequality \ref{ineq} - we will finally go back to the original equation. Assume by contradiction that $a \neq 0$ - the idea is that this will allow us to talk about the arguments of the $x_{a_n}$, who will align in a certain way to force $e(\beta b_k) \rightarrow 1$. First take a look at the equation

$$x_{a_k} = \frac{(n-a_k)}{n} e(\beta b_k) x_{a_{k}-a_{k-1}} + \frac{a_k}{n} x_{a_{k-1}}.$$

It implies that for any $\epsilon >  0$, the arguments of $x_{a_k}$ and $x_{a_{k-1}}$ are within $\epsilon$ of each other as long as $k$ is large, since $|x_{a_k}| \approx |x_{a_{k-1}}|$ while $|\frac{(n-a_k)}{n} e(\beta b_k) x_{a_{k}-a_{k-1}}| \le \frac{(n-a_k)}{n} (a+\epsilon)$, which forces approximate equality in the triangle inequality, which together with $|x_{a_i}|$ being bounded away from $0$ if $a\neq 0$ forces approximately aligned arguments.

By extending this relationship between consecutive terms a finite number of times we get that for any $\epsilon > 0$ the numbers $x_{a_{k}},..., x_{a_{k-L}}$ have arguments within $\epsilon$ of each other for large $k$. Now apply the recursion

$$x_n = \frac{(n-a_k)}{n} e(\beta b_k) x_{n-a_k} + \frac{a_k}{n} x_{a_k}$$
repeatedly, starting out with $n = a_{k}$ and unraveling the base $a_i$ representation of $n = a_k$. On the left side we get $x_{a_{k}}$, and on the right side we get an expression in which the only $x_i$'s that appear are $x_{a_{k-1}},..., x_{a_{k-L}}$. Notice each term in this expression is composed by a positive real number weight $\frac{a_{k-i}}{a_k}$ (these weights add up to $1$), an $x_{k-i}$ (that have absolute value approximately $a$ and similar arguments), and a unit complex number composed of the product of $e(\beta b_{k-i})$'s.

Now notice that the weights $\frac{a_{k-i}}{a_k}$ are bounded away from zero (from the exponential growth condition on $a_i$), so the fact that $x_{a_k}$ is close to the other $a_i$'s from approximate equality in the triangle inequality implies that each of the unit complex numbers is approximately $1$. But notice that one of these complex numbers is $e(\beta b_k)$ (it is always multiplying the ``second" term), so from this one can deduce that $e(\beta b_k) \rightarrow 1$ as $k \rightarrow \infty$. This implies $\| \beta b_k\| \rightarrow 0$, which is false by assumption.

So indeed $a = 0$, that is, $x_{a_k} \rightarrow 0$. Because for each $\epsilon > 0$ one has $|x_n| \le a+\epsilon$ for all large $n$, this implies $x_n \rightarrow 0$. That is, $\hat \mu_n(d) \rightarrow 0$ for each $d \neq 0$, as desired.

\end{proof}

These results shall be applied together with the following version of a theorem of Weyl, which reduces understanding the distribution of a sequence to understanding its Weyl sums.

\begin{theorem}\label{weyl} If $\mu_1, \mu_2,...$ are probability measures on the circle such that for each $d$ the sequence of their d-th Fourier coefficients $\{\hat \mu_n(d) \}_{n \in \mathbb{N}}$ converges, then $\mu_1, \mu_2,...$ converges weakly to some probability measure $\mu$, whose $d$-th Fourier coefficient is equal to the limit of that sequence for each $d$.

In particular, if for each $d \neq 0$ the sequence $\{\hat \mu_n(d) \}_{n \in \mathbb{N}}$ converges to $0$, then $\mu_1, \mu_2,...$ converges to the uniform distribution.

\end{theorem}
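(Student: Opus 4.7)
The plan is to exploit the compactness of the space of Borel probability measures on the compact circle $\mathbb{T}$, combined with the fact that a measure is determined by its Fourier coefficients. Write $c_d$ for the hypothesized limit of $\hat\mu_n(d)$; note that $c_0 = 1$ since each $\mu_n$ is a probability measure.

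First I would invoke sequential compactness: since $\mathbb{T}$ is a compact metric space, the space of Borel probability measures on $\mathbb{T}$ is sequentially compact in the weak topology (by Prokhorov's theorem, tightness being automatic, or directly via Banach--Alaoglu applied to the unit ball of $C(\mathbb{T})^{*}$). Hence every subsequence of $\{\mu_n\}$ admits a further subsequence $\mu_{n_k}$ converging weakly to some probability measure $\mu$ on $\mathbb{T}$.

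Next I would identify the limit. Applying the definition of weak convergence to the continuous test function $x \mapsto e^{2\pi i d x}$ gives $\hat\mu_{n_k}(d) \to \hat\mu(d)$ for each $d \in \mathbb{Z}$; but by hypothesis this sequence also tends to $c_d$, so $\hat\mu(d) = c_d$ for every $d$. By Stone--Weierstrass, trigonometric polynomials are dense in $C(\mathbb{T})$, so a Borel probability measure on $\mathbb{T}$ is determined by its Fourier coefficients. Therefore every subsequential weak limit of $\{\mu_n\}$ equals the same measure $\mu$.

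Finally I would upgrade subsequential convergence to full convergence by the standard ``every subsequence has a further subsequence converging to $\mu$" argument: if $\mu_n$ did not converge weakly to $\mu$, then there would exist a continuous $f$ and an $\epsilon > 0$ and a subsequence along which $\bigl|\int f\,d\mu_n - \int f\,d\mu\bigr| \ge \epsilon$; extracting a further weakly convergent sub-subsequence yields a contradiction with the previous paragraph. The ``in particular" statement is then immediate: if $c_d = 0$ for all $d \ne 0$ and $c_0 = 1$, these are precisely the Fourier coefficients of the normalized Lebesgue measure on $\mathbb{T}$, so $\mu$ is the uniform distribution. I do not expect a genuine obstacle in this argument, since everything is classical; the only care needed is to invoke compactness and coefficient-determination cleanly.
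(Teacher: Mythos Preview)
Your argument is correct, but it takes a different route from the paper's. The paper argues directly: for a continuous test function $f$, approximate $f$ uniformly by a smooth function with absolutely summable Fourier series $f(x)=\sum_d a_d e(dx)$, and then write $\int f\,d\mu_n=\sum_d a_d\,\hat\mu_n(d)$; termwise convergence of $\hat\mu_n(d)$ together with $\sum|a_d|<\infty$ gives convergence of $\int f\,d\mu_n$, and the limit functional is (via Riesz) integration against a probability measure whose $d$-th Fourier coefficient is $\lim_n\hat\mu_n(d)$ by taking $f(x)=e(dx)$. Your proof instead invokes compactness (Prokhorov/Banach--Alaoglu) to produce subsequential limits, identifies them via Fourier coefficients and Stone--Weierstrass, and then upgrades to full convergence. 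The paper's approach is a shade more elementary and self-contained, avoiding compactness theorems and extracting the limit measure in one step; your approach is the standard ``soft'' argument and makes the role of uniqueness of Fourier coefficients more explicit. Both ultimately rest on the density of trigonometric polynomials in $C(\mathbb{T})$.
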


\begin{proof}

We need that for any continuous $f$ the sequence

$$\int f d\mu_n$$
converge as $n \rightarrow \infty$. By approximating $f$ uniformly with smooth functions, it suffices to prove this same statement when $f$ is smooth. But a smooth function has a Fourier expansion $f(x) = \sum_{d \in \mathbb{Z}} a_d e(dx)$ where $\sum|a_d| < \infty$, so

$$\int f d\mu_n = \sum a_d \int e(dx) d\mu_n = \sum a_d \hat \mu_n(d).$$

Because each $\hat \mu_n(d)$ converges as $n \rightarrow \infty$, and  $\sum|a_d| < \infty$, the value of this sum also converges as $n \rightarrow \infty$.

The statement that the Fourier coefficients of the limit distribution are the limit of the corresponding coefficients of $\mu_n$ is a particular case of the weak convergence for $f(x) = e(dx)$.

\end{proof}

We may now return to the original $H$ sequence, and conclude that the limit distributions we observed indeed exist:

\begin{corollary}

For any real number $\beta$ the distribution of $\beta H(n) \mod 1$ converges to a measure in the circle.

\end{corollary}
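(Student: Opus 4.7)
The plan is to reduce this corollary to an application of Theorem \ref{weyl} together with Theorems \ref{replace} and \ref{uniform}, using the two lemmas from Section \ref{a little} that characterize $\{\beta : \|\beta h_k\| \to 0\} = \mathbb{Z}[\alpha]$. First I would set $\mu_n = \frac{1}{n}\sum_{i=0}^{n-1} \delta(\beta H(i))$, so that the $d$-th Fourier coefficient is precisely the Weyl sum $\hat\mu_n(d) = \frac{1}{n}\sum_{i=0}^{n-1} e(d\beta H(i))$. By Theorem \ref{weyl}, it suffices to check that for every integer $d$ this sequence of Fourier coefficients converges as $n \to \infty$. The case $d = 0$ is trivial since $\hat\mu_n(0) = 1$.

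For $d \neq 0$, I would view $H$ as the replacement sequence built from $a_i = h_i$ and $b_i = h_{i-1}$ as in Section 4, and verify that the Narayana sequence $h_i$ has mild signature: it is increasing with $h_1 = 1$, the recursion $h_k = h_{k-1} + h_{k-3}$ is already the greedy base-$h_i$ expansion of $h_k$ (so one can take $L = 3$), and the growth condition $r h_n < h_{n+1} < s h_n$ follows from $h_n \sim c\alpha^{-n}$. With this in hand, I would split into two cases according to whether $d\beta \in \mathbb{Z}[\alpha]$ or not.

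If $d\beta \in \mathbb{Z}[\alpha]$, then by Lemma \ref{tendszeronara} the quantities $\|d\beta h_i\|$ decrease exponentially, so $\sum_i \|d\beta b_i\| < \infty$, and Theorem \ref{replace} applied with $\alpha := d\beta$ yields convergence of $\hat\mu_n(d)$. If instead $d\beta \notin \mathbb{Z}[\alpha]$, then the second lemma of Section \ref{a little} (the converse) says $\|d\beta h_i\|$ cannot tend to $0$, so the hypothesis of Theorem \ref{uniform} is met with $\beta := d\beta$, and $\hat\mu_n(d) \to 0$. In either case the $d$-th Fourier coefficient converges, and Theorem \ref{weyl} delivers weak convergence of $\mu_n$ to a probability measure on the circle.

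I do not anticipate a genuine obstacle here, since all of the substantial work has been done in the technical theorems of Section 4 and in the characterization lemmas of Section \ref{a little}. The only bookkeeping that requires a moment of care is confirming that $\{h_i\}$ satisfies the mild signature hypotheses and correctly dualizing between statements phrased in terms of ``$\alpha$'' and the actual integer multiple $d\beta$ that appears when taking $d$-th Fourier coefficients.
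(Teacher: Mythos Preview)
Your proposal is correct and follows essentially the same approach as the paper: reduce to convergence of Weyl sums via Theorem \ref{weyl}, recognize $H$ as the replacement sequence of $a_i=h_i$ by $b_i=h_{i-1}$, and split into the two cases $d\beta\in\mathbb{Z}[\alpha]$ versus $d\beta\notin\mathbb{Z}[\alpha]$ using Lemma \ref{tendszeronara} and its converse together with Theorems \ref{replace} and \ref{uniform}. If anything, you are slightly more explicit than the paper in verifying the mild signature hypotheses and in handling $d=0$.
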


\begin{proof}

From Theorem \ref{weyl}, it suffices to argue that for each $d \in \mathbb{Z}$ the Weyl sums $x_n = \frac{1}{n} \sum_{i = 0}^{n-1} e(d \beta H(i))$ converge. These are the Weyl sums of a replacement sequence, of $a_n = h_n$ by $b_n = h_{n-1}$, so Theorems \ref{replace} and \ref{uniform} apply. For each $d$ either  $d \beta \in \mathbb{Z}[\alpha]$, in which case we argued in section \ref{a little} that $\| d \beta h_i \| \rightarrow 0$ quickly so Theorem \ref{replace} applies and this Weyl sum converges, or $d \beta \notin \mathbb{Z}[\alpha]$, in which case we argued the numbers $\| d \beta h_i \|$ does not converge to zero so Theorem \ref{uniform} applies and this Weyl sum converges to zero. In either case the sequence of Weyl sum converges. This happens for each $d \in \mathbb{Z}$ so the limit measure must exist.

\end{proof}

\begin{corollary}

Let $\alpha$ be the real root of $x^3+x=1$. For any real number $\beta \notin \mathbb{Q}[\alpha]$ the distribution of $\beta H(n) \mod 1$ converges to the uniform measure in the circle.

\end{corollary}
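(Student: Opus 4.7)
The plan is to reduce this corollary, via Theorem~\ref{weyl}, to showing that every nontrivial Fourier coefficient of $\mu_n$ tends to $0$, and then invoke Theorem~\ref{uniform} applied to the replacement sequence $a_n = h_n$, $b_n = h_{n-1}$ that produces $H$. Concretely, for each nonzero integer $d$, the $d$-th Fourier coefficient $\hat\mu_n(d)$ is exactly the Weyl sum
\[
x_n = \frac{1}{n}\sum_{i=0}^{n-1} e(d\beta H(i)),
\]
so by Theorem~\ref{weyl} it suffices to show $x_n \to 0$ for every $d \neq 0$.

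The first step is to observe that the hypothesis $\beta \notin \mathbb{Q}[\alpha]$ passes to all nonzero integer multiples: since $\mathbb{Q}[\alpha]$ is a $\mathbb{Q}$-vector space, $d\beta \in \mathbb{Q}[\alpha]$ would force $\beta \in \mathbb{Q}[\alpha]$, a contradiction. In particular $d\beta \notin \mathbb{Z}[\alpha]$.

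Next, I would apply the contrapositive of the second lemma in Section~\ref{a little} (the lemma stating that if $\|\gamma h_k\| \to 0$ then $\gamma \in \mathbb{Z}[\alpha]$). Since $d\beta \notin \mathbb{Z}[\alpha]$, the sequence $\|d\beta h_k\|$ does not converge to zero. This is exactly the hypothesis on $b_k = h_{k-1}$ needed to invoke Theorem~\ref{uniform} with the role of $\beta$ there played by $d\beta$, and the replacement sequence being precisely $H$. Theorem~\ref{uniform} then yields $x_n \to 0$.

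Since this holds for every $d \neq 0$, the second part of Theorem~\ref{weyl} (combined with the previous corollary guaranteeing that the limit measure exists) identifies the limit of $\mu_n$ as the uniform distribution on the circle. There is no serious obstacle here: this corollary is essentially a bookkeeping combination of the lemma that characterizes numbers $\beta$ with $\|\beta h_k\|\to 0$, the Weyl-sum criterion of Theorem~\ref{uniform}, and the Fourier-analytic equivalence in Theorem~\ref{weyl}. The only mild subtlety is remembering to pass from $\beta$ to $d\beta$ before applying the Narayana-characterization lemma.
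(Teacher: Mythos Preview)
Your proposal is correct and follows essentially the same route as the paper's proof: both argue that $\beta\notin\mathbb{Q}[\alpha]$ forces $d\beta\notin\mathbb{Z}[\alpha]$ for every nonzero integer $d$, invoke the lemma from Section~\ref{a little} to conclude $\|d\beta h_k\|\not\to 0$, apply Theorem~\ref{uniform} to get the Weyl sums tending to zero, and finish with Theorem~\ref{weyl}. The only superfluous step is your appeal to the previous corollary for existence of the limit measure; the second part of Theorem~\ref{weyl} already delivers convergence to the uniform measure directly once every nontrivial Fourier coefficient tends to zero.
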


\begin{proof}

Because $\beta \notin \mathbb{Q}[\alpha]$, for each integer d one has $d \beta \notin \mathbb{Z}[\alpha]$, hence $\{ \| d \beta h_i \| \}_{i \in \mathbb{N}}$ does not converge to zero, and Theorem \ref{uniform} always applies. So each Weyl sum converges to zero, which by Theorem \ref{weyl} implies the numbers $\beta H(n)$ are uniformly distributed in the circle.

\end{proof}

We conjecture but can't prove that in contrast to this corollary, for $\beta \in \mathbb{Q}[\alpha]$ the arising distribution is never uniform. Verifying this non-uniformity in any particular case is simple: the same analysis we performed in the case $d\beta \in \mathbb{Z}[\alpha]$, together with the computation of $\hat \mu_n(d \beta)$ for large enough $n$ can show that the limit will not be too far from the computed terms, hence non-zero. This proves a Fourier coefficient of our distribution is non-zero, which stablishes non-uniformity. For every $d\beta \in \mathbb{Z}[\alpha]$ we  tried we established a non-zero Weyl sum limit at $d\beta$ by this method, but we see no general proof these limits are non-zero.

A third corollary follows from the Weyl sums at the rationals.

\begin{corollary}

For each natural number $m$, the sequence $H(n) \mod m$ is uniformly distributed in $\{0,1,..., m-1\}$.

\end{corollary}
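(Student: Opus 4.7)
The plan is to reduce the uniform distribution statement to a collection of Weyl sums at rational scaling factors and then invoke Theorem~\ref{uniform}. By standard Fourier analysis on $\mathbb{Z}/m\mathbb{Z}$, the sequence $H(n) \bmod m$ is uniformly distributed in $\{0,1,\ldots,m-1\}$ if and only if for every $d \in \{1,\ldots,m-1\}$ the exponential sum $\frac{1}{N}\sum_{n<N} e(d H(n)/m)$ tends to $0$ as $N \to \infty$. These are precisely the Weyl sums of the replacement sequence $H$ at the rational scaling factor $\beta = d/m$, where $a_i = h_i$ and $b_i = h_{i-1}$, so Theorem~\ref{uniform} applies as soon as its hypothesis $\|\beta h_{i-1}\| \not\to 0$ is verified.

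The verification is essentially an elementary fact about the Narayana sequence modulo an integer. Since $\|(d/m) h_{i-1}\|$ takes only finitely many values in $\{0, 1/m, 2/m, \ldots\}$, convergence to $0$ would force it to equal $0$ for every sufficiently large $i$, which is to say $h_i \equiv 0 \pmod{M}$ eventually, where $M := m/\gcd(d,m) \ge 2$. I would rule this out by noting that the Narayana recurrence $h_k = h_{k-1}+h_{k-3}$ can be run backwards over $\mathbb{Z}$ (and hence over $\mathbb{Z}/M\mathbb{Z}$) by rewriting it as $h_{k-3} = h_k - h_{k-1}$. If three consecutive terms of $\{h_k\}$ were zero modulo $M$, iterating this backward recurrence would force $h_i \equiv 0 \pmod M$ for every $i \ge 1$, contradicting $h_1 = 1$. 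Hence $\|(d/m) h_{i-1}\|$ cannot converge to zero.

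With the hypothesis of Theorem~\ref{uniform} in hand, each of the $m-1$ non-trivial Weyl sums tends to $0$, and uniform distribution follows. The main obstacle is really just setting up the right reduction: once the correspondence between uniform distribution mod $m$ and the vanishing of Weyl sums at rationals $d/m$ is noted, the earlier machinery does all the work, and the invertibility of the Narayana recurrence modulo $M$ is immediate.
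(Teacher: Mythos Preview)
Your proof is correct and follows essentially the same route as the paper: reduce uniform distribution mod $m$ to the vanishing of the Weyl sums at $\beta = d/m$ for $0 < d < m$, then invoke Theorem~\ref{uniform} after checking that $\|(d/m)h_i\|\not\to 0$. The only difference is in that last check: the paper appeals to the general lemma that $\|\beta h_k\|\to 0$ forces $\beta\in\mathbb{Z}[\alpha]$ (hence no non-integer rational qualifies), whereas you give the more elementary self-contained argument via the backward recurrence $h_{k-3}=h_k-h_{k-1}$ modulo $M$; both are fine, and yours avoids importing the algebraic characterization.
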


\begin{proof}

This is equivalent to a statement abou the distribution of $\beta H(n) \mod 1$ for $\beta = \frac{1}{m}$. If $d$ is not a multiple of $m$, $d \frac{1}{m} \notin \mathbb{Z}[\alpha]$, so the Weyl sums at $d \frac{1}{m}$ converge to $0$, and if $d$ is a multiple of $m$ clearly the Weyl sums at $d \frac{1}{m}$ converge to $1$. These correspond to the Fourier coefficients of the uniform discrete distribution supported at $\frac{0}{m}, \frac{1}{m}, ..., \frac{m-1}{m}$, which by Theorem \ref{weyl} proves our assertion.

A further remark is that one can use a similar argument to show that for $\beta \in \mathbb{Q}[\alpha]$, if $m$ is the smallest natural with $m \beta \in \mathbb{Z}[\alpha]$, the distribution of $\beta H(n)$ consists of $m$ consecutive scaled down copies of the distribution of $(m \beta) H(n)$ (as this the unique distribution whose $md$-th Fourier coefficient equals the $d$-th Fourier coefficient of the original distribution for each $d$, while the remaining Fourier coefficients at non-multiples of $m$ are zero).

\begin{figure}[h]\label{fig:h3fig}
\centering
  \includegraphics[width=250pt]{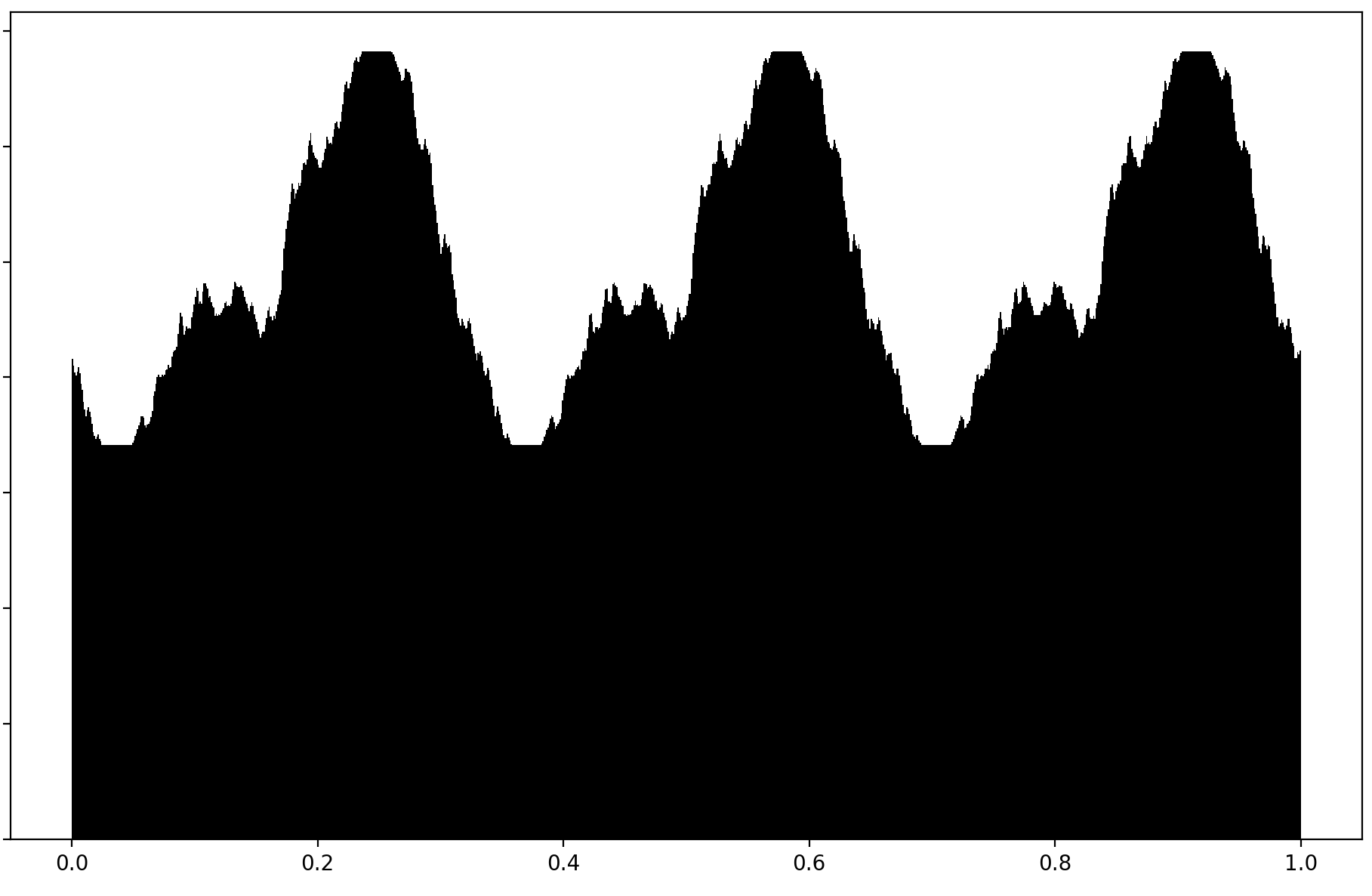}
  \caption{Histogram of $\frac{\alpha}{3} H(n) \mod 1$ for $n$ up to $10^7$}
\end{figure}

\end{proof}

\section{Additional applications}
\

\begin{proposition}

Let $a_i$ be a sequence of mild signature, and let $s(n)$ denote the sum of digits of $n$ when written in base $a_i$. If $\alpha$ is an irrational number, $\{ \alpha s(n) \mod 1\}_{n \in \mathbb{N}}$ is uniformly distributed.

\end{proposition}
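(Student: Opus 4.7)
The plan is to reduce this to the criteria of Theorems \ref{replace} and \ref{uniform} by observing that $s(n)$ is precisely the replacement sequence arising from $a_i$ (as the ``base'') together with the constant sequence $b_i = 1$. Indeed, if the greedy base-$a_i$ decomposition of $n$ is $n = c_1 a_{k_1} + c_2 a_{k_2} + \cdots$, then replacing each occurrence of $a_{k_j}$ by $b_{k_j} = 1$ yields $c_1 + c_2 + \cdots$, which is exactly $s(n)$.

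With this identification in hand, the proof reduces to a direct application of Theorem \ref{weyl}: it suffices to show that for each nonzero integer $d$ the Weyl sum $x_n = \frac{1}{n}\sum_{i=0}^{n-1} e(d\alpha s(i))$ tends to $0$. Since $\alpha$ is irrational, $d\alpha$ is irrational, hence $\|d\alpha\|$ is a strictly positive constant. The relevant sequence in Theorem \ref{uniform} is $\gamma_i = \|d\alpha \cdot b_i\| = \|d\alpha\|$, which is bounded away from zero and in particular does not converge to $0$. Theorem \ref{uniform} then immediately gives $x_n \to 0$.

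Thus the $d$-th Fourier coefficients of the empirical measures $\mu_n = \frac{1}{n} \sum_{i=0}^{n-1} \delta(\alpha s(i))$ tend to $0$ for every $d \neq 0$, and Theorem \ref{weyl} concludes that $\mu_n$ converges weakly to the uniform measure on the circle, which is equivalent to $\{\alpha s(n) \bmod 1\}_{n \in \mathbb{N}}$ being equidistributed. There is no real obstacle here beyond noticing the reinterpretation: the whole machinery of Section 4 was already built for exactly this kind of sum, and the constant-$b_i$ case is essentially the simplest instance of it.
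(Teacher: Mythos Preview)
Your proof is correct and follows essentially the same approach as the paper: both identify $s(n)$ as the replacement sequence with $b_i = 1$, observe that $\gamma_i = \|d\alpha\|$ is a nonzero constant for irrational $\alpha$ and $d \neq 0$, and invoke Theorem~\ref{uniform}. Your write-up is simply a slightly more explicit version, spelling out the role of Theorem~\ref{weyl} in passing from vanishing Weyl sums to equidistribution.
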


\begin{proof}
This follows from Theorem \ref{uniform} for $a_i$ being this sequence $a_i$ and $b_i = 1$ for each $i$, so $A(n)$ matches the sum of digits. Then for any $\alpha$ irrational and integer $d \neq 0$ the sequence $\gamma_i = \|d \alpha b_i\| = \|d \alpha\|$ is constant and nonzero, so it doesn't converge to zero.

\end{proof}

This applies to most notions of sum digits such as the traditional base $b$, Zeckendoff coding and Narayana base. There has been  is some literature on the distribution of $s(n)$ for these bases, for instance \cite{DUMONT199719} proves a central limit theorem for the distribution of $s(n)$.

\begin{proposition}

Let $w_n$ be the number obtained when writing the number $n$ in base $2$ and reading it in base $3$ \cite{A005836}.  If $\alpha$ is an irrational number, then $\alpha w_n \mod 1$ is uniformly distributed. 

\end{proposition}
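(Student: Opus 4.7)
My plan is to identify $w_n$ as a replacement sequence in the sense of Section 4 and invoke Theorems~\ref{uniform} and~\ref{weyl}. Taking $a_i = 2^{i-1}$ and $b_i = 3^{i-1}$, the greedy ``base $a_i$'' expansion of $n$ is just its binary representation, so the associated $A(n)$ is exactly $w_n$. First I would verify that $a_i = 2^{i-1}$ has mild signature: the doubling identity $a_n = 2 a_{n-1}$ furnishes a length-$1$ signature with non-negative coefficients, and $a_{n+1}/a_n = 2$ satisfies the growth bounds with any $1 < r < 2 < s$.

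Then, following the template of the previous proposition, it suffices to show that $\|d\alpha \cdot b_i\| = \|d\alpha \cdot 3^{i-1}\|$ does not tend to $0$ for any nonzero integer $d$. The main content of the argument is a short lemma: for real $\gamma$, if $\|\gamma \cdot 3^i\| \to 0$ then $\gamma \in \mathbb{Z}[1/3]$. I would prove this by letting $n_i$ denote the closest integer to $\gamma \cdot 3^i$; the triangle inequality gives $|n_{i+1} - 3 n_i| \le \|\gamma \cdot 3^{i+1}\| + 3\|\gamma \cdot 3^i\| \to 0$, and since these differences are integers we get $n_{i+1} = 3 n_i$ for all $i$ past some $N$. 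Hence $n_i/3^i$ is constant equal to some $\gamma' = n_N/3^N \in \mathbb{Z}[1/3]$ for $i \ge N$, and the estimate $3^i|\gamma - \gamma'| = |\gamma \cdot 3^i - n_i| \to 0$ forces $\gamma = \gamma'$.

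Since $\alpha$ is irrational and $d \neq 0$, the real number $d\alpha$ is irrational and in particular lies outside $\mathbb{Z}[1/3] \subset \mathbb{Q}$, so the lemma yields $\|d\alpha \cdot 3^{i-1}\| \not\to 0$. Theorem~\ref{uniform} then gives that the $d$-th Weyl sum of $\alpha w_n$ tends to zero, and Theorem~\ref{weyl} converts this into equidistribution of $\alpha w_n \mod 1$. There is no real obstacle here: the replacement-sequence framework does all the heavy lifting, and the only genuinely new input is the elementary observation about $\mathbb{Z}[1/3]$, which reflects the fact that a real number whose base-$3$ digits are eventually constant must be triadic rational.
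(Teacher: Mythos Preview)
Your proof is correct and follows essentially the same route as the paper: both identify $w_n$ as the replacement sequence with $a_i=2^{i-1}$, $b_i=3^{i-1}$ and reduce to showing $\|d\alpha\cdot 3^i\|\not\to 0$ for irrational $d\alpha$. The only cosmetic difference is that the paper dispatches this last point by observing that $\|\gamma\cdot 3^n\|\to 0$ forces the base-$3$ expansion of $\gamma$ to be eventually all $0$'s or all $2$'s (hence $\gamma$ rational), whereas you reach the same conclusion via the closest-integer recurrence $n_{i+1}=3n_i$; your own final sentence notes that these are two phrasings of the same fact.
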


\begin{proof}
This is an application of Theorem \ref{uniform} for $a_n=2^{n-1}$ and $b_n = 3^{n-1}$. In order to prove it applies we need to show that for each irrational $\alpha$ and $d \neq 0$, the sequence $\|d \alpha 3^n\|$ doesn't converge to zero. But notice that the only way  $\|d \alpha 3^n\|$ can converge to zero is if the base 3 representation of $d \alpha$ is eventually made up of only 0's or of only 2's, but that is inconsistent to the irrationality of $d \alpha$. So all the assumptions apply.
\end{proof}

This sequence was first explored by Szekeres, as it is the greedy sequence of natural numbers avoiding $3$-term arithmetic progressions. Let us briefly mention two other sequences for which the uniformity of $\alpha a_n \mod 1$ for any irrational $\alpha$ follows similarly from Theorem \ref{uniform}:

\begin{itemize}

\item The Moser-de Bruijn sequence \cite{A000695}, in which $a_n$ is obtained from writing $n$ in base $2$ and reading in base $4$. This sequence is also well-studied for enjoying many additive properties.

\item The sequence of $n$'s such that ${3n \choose n}$ is odd \cite{A003714}. These are exactly the numbers without consecutive $1$'s in their base $2$ representation, so the $n$-th number with this property is obtained by writing $n$ in base Fibonacci and reading it in base $2$.

\end{itemize}

Let us now see an example where the distributions achieved are not uniform, and the signals involved not algebraic.

\begin{proposition}

Let $f_n$ be the sequence of numbers that can be written as a sum of distinct factorials, arranged in increasing order, where $0!$ and $1!$ are not considered distinct \cite{A059590}. 

The distribution of $\frac{1}{e} f_n \mod 1$ converges to a non-uniform, smooth measure, while  $e f_n \mod 1$ does not converge to a distribution at all.

\end{proposition}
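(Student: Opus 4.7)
The plan is to identify $f_n$ as the replacement sequence associated to $a_n=2^{n-1}$ (ordinary binary expansion, which trivially has mild signature with $r=s=2$) and $b_n=n!$: writing $n=\sum_i \epsilon_i\,2^{i-1}$ in binary yields $f_n=\sum_i \epsilon_i\cdot i!$. The big payoff of working in base $2$ is that the Weyl sum along the subsequence $N=2^K$ factorises exactly, since the digits $(\epsilon_1,\dots,\epsilon_K)$ range uniformly and independently over $\{0,1\}^K$:
$$\frac{1}{2^K}\sum_{n=0}^{2^K-1}e(t\, f_n)=\prod_{i=1}^K \frac{1+e(t\cdot i!)}{2}.$$

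For $\beta=1/e$ and integer $d$, taking $t=d/e$ and invoking the derangement identity $i!/e=D_i+r_i$ (where $D_i\in\mathbb Z$ counts derangements of $i$ letters and $r_i=(-1)^{i+1}\bigl(\tfrac{1}{i+1}-\tfrac{1}{(i+1)(i+2)}+\cdots\bigr)$ satisfies $1/(i+2)\le |r_i|<1/(i+1)$) gives $e(d\cdot i!/e)=e(d\,r_i)$. Since $\sum r_i^{\,2}<\infty$, the product $L(d):=\prod_{i\ge 1}(1+e(d\,r_i))/2$ converges to a nonzero complex number, so $\hat\mu_{2^K}(d)\to L(d)$. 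Promoting this convergence from the subsequence $N=2^K$ to arbitrary $N$ is the technical step I expect to be the main obstacle: one iterates the Weyl sum recurrence $N x_N = 2^{k_1} x_{2^{k_1}}+(N-2^{k_1})\,e(d\,(k_1+1)!/e)\,x_{N-2^{k_1}}$ along the binary expansion $N=\sum_j 2^{k_j}$, realising $x_N$ as a weighted combination of the $x_{2^{k_j}}$ with phase twists that tend to $1$ since $r_{k+1}\to 0$, and concluding $x_N\to L(d)$. Theorem \ref{weyl} then identifies the limit measure $\mu$ as the one with Fourier coefficients $\hat\mu(d)=L(d)$.

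Non-uniformity is immediate from $\hat\mu(1)=L(1)\neq 0$. For smoothness I would estimate $|L(d)|=\prod_i|\cos(\pi d\,r_i)|$: using $\log|\cos(\pi x)|\le -c x^2$ for $|x|\le 1/2$ together with the lower bound $|r_i|\ge 1/(i+2)$, the factors with $i\ge 2d$ contribute at most $-c\sum_{i\ge 2d}(d\,r_i)^2\le -c' d^2\sum_{i\ge 2d}(i+2)^{-2}\asymp -c''d$, giving $|L(d)|\lesssim e^{-c''d}$. This exponential Fourier decay makes the density of $\mu$ real-analytic, in particular smooth.

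For $\beta=e$ the same factorisation yields $x_{2^K}=\prod_{i=1}^K (1+e(s_i))/2$ where $s_i:=\{e\cdot i!\}=\sum_{k>i}i!/k!=\tfrac{1}{i+1}+O(i^{-2})>0$. Writing each factor in polar form as $\cos(\pi s_i)\,e^{i\pi s_i}$, the modulus $\prod\cos(\pi s_i)$ converges to a positive constant (since $\sum s_i^{\,2}<\infty$), while the argument $\pi\sum_{i=1}^K s_i\ge \pi(H_{K+1}-1)$ diverges as $K\to\infty$. Hence the single Fourier coefficient $\hat\mu_{2^K}(1)$ already fails to converge in $\mathbb C$, and by the easy direction of Theorem \ref{weyl} the measures $\mu_N$ cannot have any weak limit. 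Thus in both halves the essential inputs are the closed-form product formula along $N=2^K$ together with the elementary identities describing the fractional parts of $i!/e$ and $e\cdot i!$.
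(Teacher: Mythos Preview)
Your argument is correct and follows the paper's approach closely: both use the exact product formula $\hat\mu_{2^K}(d)=\prod_{i\le K}\tfrac{1+e(d\alpha\cdot i!)}{2}$, observe that the $\sim(-1)^{i+1}/i$ fractional parts for $\alpha=1/e$ make the product converge to a nonzero limit while the same-sign $\sim 1/i$ parts for $\alpha=e$ make its argument diverge, and then promote from $N=2^K$ to general $N$. The only cosmetic difference is in the smoothness estimate, where the paper bounds the factors with $i\in[1.2d,1.8d]$ (where $\{d\,i!/e\}$ stays away from $0$ and $1$) while you use the tail $i\ge 2d$ together with $\log|\cos(\pi x)|\le -cx^2$; both routes give the same exponential decay $|\hat\mu(d)|\lesssim e^{-cd}$.
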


\begin{proof}

Notice that $f_n$ is the replacement sequence of $a_n = 2^{n-1}$ by $b_n = n!$. And we also have

$$\|n! e\| \asymp \frac{1}{n} \text{  and  } \|n! \frac{1}{e}\| \asymp \frac{1}{n}$$
(since for instantance $n! \frac{1}{e} = \frac{n!}{0!}-\frac{n!}{1!}+...\pm \frac{n!}{n!} + O(\frac{1}{n})$).

So indeed for $\alpha = e$ or $\frac{1}{e}$, the sequence $\|\alpha n!\|$ converges to $0$, just not quite quickly enough for Theorem \ref{replace} to apply, since their sum diverges. 

So instead of applying it, we will for each $d$ compute the limits of $\hat \mu_n(d)$ ``by hand", where $\mu_n = \frac{1}{n} \sum_{i = 0}^{n-1} \delta(\alpha A(i))$, using the same recurrences from the proof of Theorem \ref{replace}. It is useful to look only at the subsequence $\hat \mu_{2^n}(d)$ - if a limit over this subsequence exists, so does a limit over all $n$ (simply by breaking down $n$ as a sum of powers of $2$). It satisfies the simpler recursion

$$\hat \mu_{2^n}(d) = \frac{(1+e(d\alpha n!))}{2} \hat \mu_{2^{n-1}}(d).$$

Hence the limit we are interested in is just the infinite product:

$$\lim_{n \rightarrow \infty} \hat \mu_{2^n}(d) = \prod_{n = 1}^{\infty} \left(\frac{1+ e(d \alpha n!)}{2}\right).$$

Let us first take on the case $\alpha = e$, where $e(d \alpha n!) = 1+ 2 \pi i \frac{d}{n} + O(\frac{1}{n^2})$. The last term $O(\frac{1}{n^2})$ makes little difference for convergence, so we get something like

$$\prod_{n = 1}^{\infty}\left(1+  \pi i d \frac{1}{n}\right),$$
which is a product that doesn't converge! Even though the absolute value of this complex number converges to $\prod_{n = 1}^{\infty}(1+ \pi^2  d^2 \frac{1}{n^2})$, the argument of the $n$-th term is around $2 \pi  d\frac{1}{n}$ - the sum of these diverges, so the product just swirls forever in a circle. It follows that $\mu_{n}$ can't converge to a distribution, or the sequence of their $d$-th Fourier coefficienst of would also converge for each $d$. This example shows that the decrease condition $\sum_n \|b_n \alpha \| < \infty$ in Theorem \ref{replace} is about as sharp as possible.

As for $\alpha = \frac{1}{e}$, we get instead $e(d\alpha n!) = 2 \pi i d \frac{(-1)^{n+1}}{n} + O(\frac{1}{n^2})$. So we get a product like

$$\prod_{n = 1}^{\infty}\left(1+ \pi i d \frac{(-1)^{n+1}}{n}\right),$$
which converges, thanks to the alternation. This can be used to show that $\hat \mu_{n}(d)$ converges for each $d$, which in turn implies that the sequence $\mu_n$ converges to a certain limit measure. It is precisely the measure whose $d$-th Fourier coefficient is equal to

$$\prod_{n = 1}^{\infty}\left(\frac{1+ e(d n!\frac{1}{e})}{2}\right)$$
for each $d$. Because these Fourier coefficients are non-zero we know the resulting distribution is not uniform. Furthermore, we can show this this distribution is smooth!

The fractionary part of $dn!\frac{1}{e}$ is $\frac{d}{n} + O(\frac{d}{n^2})$, which for $n$ between $1.2d$ and $1.8d$ is between $0.1$ and $0.9$. This implies $|(\frac{1+e(d n!\frac{1}{e})}{2} )| \le 0.96$ for each such $n$. At the same time, each term of the infinite product is at most $1$ in absolute value. This yields the inequality

$$|\hat \mu(d)| \le 0.96^{0.6d} \le 0.98^d$$.

The exponential decay of these Fourier coefficients implies that the limit distribution is smooth. Here is a picture of this distribution:

\begin{figure}[h]
\centering
  \includegraphics[width=250pt]{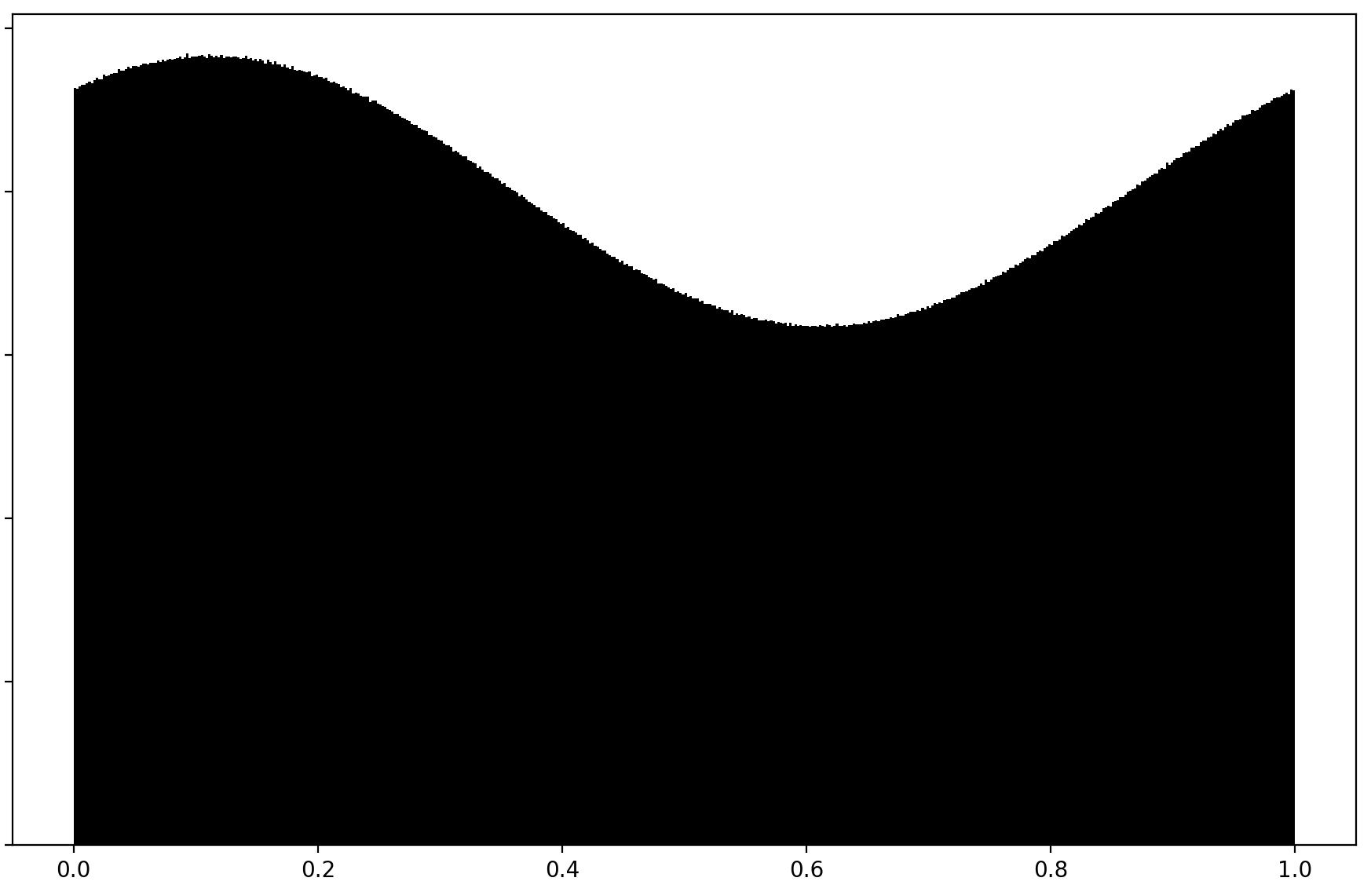}
  \caption{Histogram of $\frac{1}{e} f_n$ for $n$ up to $10^7$}
\end{figure}

\end{proof}

\section{Absolute Continuity}\label{abscontinuity}
\

We showed that for the Hofstadter $H$ sequence the distribution of $\alpha H(n) \mod 1$ converges to a certain measure $\mu$ - in this section we will show this measure is absolutely continuous (with respect to the Lebesgue measure). This means that $\mu(E) = 0$ for any set $E$ of Lebesgue measure zero - or equivalently by the Radon-Nikodym theorem \cite{lang1969analysis} that $\mu$ is of the shape $u dx$ for some non-negative integrable function $u$. So concretely, the absolute continuity of $\mu$ means that Figure \ref{fig:h3fig} of the distribution of $\alpha H(n) \mod 1$ pictures an actual function, rather than say a singular measure. The proof is straightforward.

\begin{lemma} \label{absolute continuity}

Let $a_1,a_2,...$ be a sequence of natural numbers. Assume there are constants $B$ and $C$ such that $a_n \le Bn$ for each $n$, and such that each natural number appears at most $C$ times in the sequence.

If $\alpha$ is irrational and the distribution of $\alpha a_n \mod 1$ converges to a measure, then this measure is absolutely continuous.

\end{lemma}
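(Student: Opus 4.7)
The plan is to establish that the Fourier coefficients of $\mu$ satisfy $\sum_{d \in \mathbb{Z}} |\hat{\mu}(d)|^2 < \infty$, which by Parseval makes $\mu$ have an $L^2$ density on the circle, and in particular absolutely continuous. The maneuver is to mollify $\mu$ and bound the $L^2$ norm of the mollifications uniformly.

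Fix a non-negative smooth function $\phi$ on $\mathbb{R}$ supported in $[-1/2,1/2]$ with $\int \phi = 1$, and for $\epsilon > 0$ set $\phi_\epsilon(x) = \epsilon^{-1} \phi(x/\epsilon)$, viewed as a periodic function on the circle once $\epsilon$ is small. Let $\mu_N = \frac{1}{N}\sum_{i=1}^N \delta(\alpha a_i)$, which converges weakly to $\mu$, and set $f_{N,\epsilon} = \mu_N * \phi_\epsilon$. Expanding the $L^2$ norm and writing $\psi_\epsilon = \phi_\epsilon * \widetilde{\phi_\epsilon}$ (where $\widetilde{\phi_\epsilon}(x) = \phi_\epsilon(-x)$), one obtains
$$\|f_{N,\epsilon}\|_2^2 \;=\; \frac{1}{N^2}\sum_{i,j\le N}\psi_\epsilon\bigl(\alpha(a_i - a_j)\bigr),$$
where $\psi_\epsilon$ is supported on $\{x : \|x\| < \epsilon\}$ with $\|\psi_\epsilon\|_\infty \le \epsilon^{-1}\|\phi\|_2^2$.

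Next I would group the pairs by $m = a_i - a_j$. The hypothesis that each natural number appears at most $C$ times implies that for each $m$ the number of pairs $(i,j)$ with $a_i - a_j = m$ is at most $CN$: for each of the $N$ indices $j$, the equation $a_i = a_j + m$ has at most $C$ solutions for $i$. Moreover $\psi_\epsilon(\alpha m)$ vanishes unless $\|\alpha m\| < \epsilon$, and the bound $a_n \le Bn$ forces $m \in [-BN,BN]$. Therefore
$$\|f_{N,\epsilon}\|_2^2 \;\le\; \frac{C \|\phi\|_2^2}{\epsilon N} \cdot \big|\{m \in [-BN, BN] : \|\alpha m\| < \epsilon\}\big|.$$
By Weyl's equidistribution theorem applied to the irrational $\alpha$, the count on the right is asymptotic to $4 B \epsilon N$ as $N \to \infty$ with $\epsilon$ fixed. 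So for each $\epsilon > 0$ and every sufficiently large $N$ one has $\|f_{N,\epsilon}\|_2^2 \le K$, with a constant $K$ depending on $B,C,\phi$ but not on $\epsilon$.

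To finish, pass to the limit twice. Since $\mu_N \to \mu$ weakly and $\phi_\epsilon$ is continuous, $f_{N,\epsilon} \to \mu * \phi_\epsilon$ pointwise with a uniform sup bound, so $\|\mu * \phi_\epsilon\|_2 \le K^{1/2}$ for every $\epsilon$. By Parseval on the circle, $\|\mu * \phi_\epsilon\|_2^2 = \sum_d |\hat{\mu}(d)|^2 |\hat{\phi}(\epsilon d)|^2$; letting $\epsilon \to 0$ and using $\hat{\phi}(\epsilon d) \to \hat{\phi}(0) = 1$ pointwise, Fatou's lemma gives $\sum_d |\hat{\mu}(d)|^2 \le K$, so $\mu$ has an $L^2$ density. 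The delicate point is the balancing of the $\epsilon^{-1}$ loss from $\|\psi_\epsilon\|_\infty$ against the $\epsilon N$ gain from the equidistribution count: the cancellation succeeds precisely because the linear growth hypothesis $a_n \le Bn$ keeps the range of differences $m$ of order $N$ (rather than larger), so the two factors exactly compensate.
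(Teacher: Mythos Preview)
Your argument is correct, but it takes a genuinely different route from the paper. The paper tests $\mu$ directly against intervals: for a closed interval $I$ it picks a continuous $f$ squeezed between $\mathbf{1}_I$ and $\mathbf{1}_{I'}$ with $|I'|=2|I|$, then counts how many $a_k$ with $k\le n$ can land in $I'$ modulo $1$. Since all such $a_k$ lie in $[1,Bn]$, equidistribution of $\alpha m$ gives at most $\sim 2Bn|I'|$ eligible values $m$, each hit at most $C$ times, so $\int f\,d\mu_n \le 4BC|I|$; weak convergence then yields $\mu(I)\le 4BC|I|$ for every interval, hence an $L^\infty$ density. Your mollification/pair-counting method instead bounds $\|\mu\|_{L^2}$ via Parseval, grouping pairs by the difference $m=a_i-a_j$ and using equidistribution on the range $|m|\le BN$. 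Both proofs hinge on the same two ingredients (linear growth to control the range, bounded multiplicity to control the count, equidistribution to turn ranges into densities), but the paper's single-sum argument is shorter and delivers the stronger conclusion $\mu\in L^\infty$ with an explicit constant $4BC$, whereas your double-sum approach yields only $\mu\in L^2$. On the other hand, your method would generalize more readily to settings where one wants $L^p$ information for intermediate $p$, and the Fourier framing connects naturally to the Weyl-sum viewpoint used elsewhere in the paper.
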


\begin{proof}

Let $\mu_n = \frac{1}{n} \sum_{i = 0}^{n-1} \delta(\alpha a_i)$, and let $\mu$ be the measure that this sequence converges to. Let $I$ be an arbitrary closed interval of the circle. Let $f$ be a continuous function between the characteristic function of $I$ and the characteristic function of some interval $I'$ of length twice the length of $I$ containing it. We have

$$\mu(I) \le \int f(x) d \mu.$$

At the same time, if $n$ is large enough the number of $a_k$'s with $k\le n$  such that $\alpha a_k \mod 1 \in I'$ is at most $2BCn|I'|$. This is because all these $a_k$'s are less than $Bn$, and provided $n$ is large enough, up to $Bn$ at most $2Bn|I'|$ naturals $m$ satisfy $\alpha m \in I'$ (this follows from the equidistribution of $\alpha m \mod 1$, since $\alpha$ is irrational). Each of these can appear at most $C$ times between the $a_k$'s, hence the inequality.

Since $f$ is $0$ outside $I'$ and at most $1$ inside it, this means that for each large $n$65
$$\int f(x) d \mu_n \le 2BC |I'| = 4BC|I|.$$

But from the weak convergence, $\int f(x) d \mu_n \rightarrow \int f(x) d \mu$ as $n \rightarrow \infty$. Hence we may conclude

$$\mu(I) \le 4BC|I|$$
for any interval $I$, which suffices to prove $\mu(E) = 0$ for any set $E$ of Lebesgue measure zero (since any measure zero set can be approximated above by a countable set of intervals with arbitrarily small total lenghts). Hence $\mu$ is absolutely continuous with respect to the Lebesgue measure, as desired.

\end{proof}

\begin{remark}

We can go a bit further - while the Radon-Nikodym theorem implies that $\mu$ is of the shape $u dx$ for some non-negative function $u \in L^{1}$, the bound 

$$\mu(I) \le 4BC |I|$$
for any interval $I$ implies that the density function $u$ actually belongs to $L^{\infty} \subset L^{1}$. That's because if the set $\{x: u(x) > 4BC \}$ has positive measure, by approximating this set with unions of intervals one can argue that one of those intervals has to satisfy $\mu(I) > 4BC |I|$, which would contradict the inequality. So $u(x) \le 4BC$ a.e., which implies $u \in L^{\infty}$. Concretely, this means the function pictured in Figure \ref{fig:h3fig} is bounded.

\end{remark}

\begin{corollary}

The limit measure of the numbers $\alpha H(n) \mod 1$ is absolutely continuous. This still holds if $\alpha$ is replaced by a number in $\mathbb{Z}[\alpha]$.

\end{corollary}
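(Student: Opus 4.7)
The plan is to invoke Lemma \ref{absolute continuity} directly on the sequence $a_n = H(n)$. The existence of the limit measure is already furnished by the preceding corollary, so the task reduces to verifying the lemma's two hypotheses: a linear bound $H(n) \le Bn$ and a uniform bound on the multiplicity with which each natural number occurs in $\{H(n)\}_{n \in \mathbb{N}}$.

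The bound $H(n) \le n$ (so $B = 1$ works) is immediate from the recurrence $H(n) = n - H(H(H(n-1)))$, since the subtrahend is a positive integer. For the multiplicity bound, I would first establish by strong induction that $H(n) - H(n-1) \in \{0, 1\}$. Writing
$$H(n) - H(n-1) = 1 - \bigl[H(H(H(n-1))) - H(H(H(n-2)))\bigr],$$
the inductive hypothesis, applied three times in succession, shows that an input increment in $\{0,1\}$ propagates through $H$ to an output increment in $\{0,1\}$, which bounds the bracket and hence the left side. To rule out three consecutive equal values, I would observe that $H(n) = H(n-1) = H(n-2)$ would force the bracketed quantity to equal $1$ (from $H(n) = H(n-1)$) while simultaneously collapsing it to $0$ (from $H(n-1) = H(n-2)$), a contradiction. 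So each natural number is repeated at most $C = 2$ times.

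With both hypotheses in hand, Lemma \ref{absolute continuity} yields absolute continuity for the limit of $\alpha H(n) \bmod 1$. To extend the conclusion to a number $\beta \in \mathbb{Z}[\alpha]$, I would note that since $\alpha$ has degree $3$ over $\mathbb{Q}$ the elements $1, \alpha, \alpha^2$ are $\mathbb{Q}$-linearly independent, so any $\beta = a_0 + a_1 \alpha + a_2 \alpha^2 \in \mathbb{Z}[\alpha]$ with $(a_1, a_2) \neq (0,0)$ is irrational, and Lemma \ref{absolute continuity} applies verbatim (the integer case being trivial and outside the scope of an absolute continuity statement). The only delicate point in the whole plan is the three-fold propagation of the induction hypothesis through nested applications of $H$; the rest is bookkeeping, and I do not anticipate any serious obstacle.
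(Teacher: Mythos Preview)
Your proposal is correct and follows the same high-level strategy as the paper: both reduce to verifying the hypotheses of Lemma~\ref{absolute continuity}. The difference lies in how those hypotheses are checked. The paper appeals to the Narayana-base description of $H$ developed in Section~\ref{a little}: since $h_{i-1} = \alpha h_i + O(|\theta|^{-i})$, the right-shift formula gives $H(n) = \alpha n + O(1)$, from which linear growth and bounded multiplicity follow in one stroke. You instead work directly from the defining recurrence, proving $H(n)-H(n-1)\in\{0,1\}$ by strong induction and then ruling out three consecutive equal values via the contradiction you describe (which is valid: $H(n-1)=H(n-2)$ makes the innermost inputs to the triple composition coincide, forcing the bracket to vanish). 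Your route is more elementary and self-contained---it does not rely on the Narayana closed formula or on Lemma~\ref{tendszeronara}---and it yields the sharp constant $C=2$, whereas the paper's $H(n)=\alpha n + O(1)$ gives only an unspecified $C$. On the other hand, the paper's argument is shorter given the machinery already in place and generalises immediately to the higher $d$-th Hofstadter sequences of Section~\ref{htype}.
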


It suffices to check the assumptions of Theorem \ref{absolute continuity} apply: indeed because each shift $h_{i-1}$ is equal to $\alpha h_i$ plus an exponentially small error, it follows that $H(n) = \alpha n + O(1)$. Therefore $H(n)$ has at most linear growth, and each natural numbers appears at most a constant number of times in this sequence.


Going deeper in the question of how smooth is the limit distribution $\mu$ of $\alpha H(n) \mod 1$ (Figure \ref{fig:h3fig}), we ask:

\begin{question}
What is the rate of decrease of the Fourier coefficients $\hat \mu(n)$ of this distribution?

\end{question}

We have been unable to answer this question - the most straightforward approach seems to require a detailed understanding of the numbers $d \alpha h_n \mod 1$ for $n \ll \log d$, which does not look easy. Certainly one has that $\sum | \hat \mu(n)|^2$ converges, since the limit measure $\mu$ is inside $L^{\infty} \subset L^2$. Empirically it seems $|\hat \mu(n)|$ is on average decreases like $|n|^{-1.47}$. If  that is true, it would suffice to establish the following:

\begin{conjecture}

The probability density function of $\mu$ is continuous but not differentiable.

\end{conjecture}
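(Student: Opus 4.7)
Plan. Both halves of the conjecture should follow from sufficiently precise control on the Fourier coefficients $\hat\mu(d)$ of the limit measure. Continuity of the density $u$ is implied by $\sum_{d \in \mathbb{Z}} |\hat\mu(d)| < \infty$, so I would aim to show the pointwise bound $|\hat\mu(d)| = O(|d|^{-1-\varepsilon})$ for some $\varepsilon > 0$. Non-differentiability on a dense set would follow, via a Hardy--Littlewood style argument, from a matching lower bound $|\hat\mu(d_k)| \gg |d_k|^{-(2-\varepsilon)}$ along a subsequence $d_k \to \infty$ whose phases align to prevent cancellation. The empirical decay exponent $\approx 1.47$ reported in the paper sits squarely in the window $(1,2)$ required for this combined upper-and-lower scheme.

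Both bounds should come from iterating the Fourier recurrence
$$\hat\mu_{h_n}(d) = \frac{h_{n-1}}{h_n}\,\hat\mu_{h_{n-1}}(d) + \frac{h_{n-3}}{h_n}\,e(d\alpha h_{n-2})\,\hat\mu_{h_{n-3}}(d)$$
as a product of $3\times 3$ transfer matrices whose only $d$-dependence is in the phases $e(d\alpha h_n)$. There are two regimes separated by the critical scale $n_0 \sim \log|d|/\log|\theta|$, where $\theta$ is the complex root of $x^3+x=1$. For $n > n_0$, the decay $\|\alpha h_n\| \asymp |\theta|^{-n}$ makes the phase essentially $1$, and the matrix collapses to one with eigenvalues $\{1, \lambda, \bar\lambda\}$, where $\lambda^2 + (1-\alpha)\lambda + (1-\alpha) = 0$ has $|\lambda| = \alpha^{3/2} \approx 0.564$; the tail therefore converges to a scalar with exponentially controlled error. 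The genuine content of the estimate lies in the oscillatory regime $n \le n_0$, where the phases look generic. I would analyze the associated cocycle along the orbit $\{d\alpha h_n\}_n$, which is governed by the expanding complex conjugate roots $\theta, \bar\theta$, and try to extract a strictly positive Lyapunov exponent, translating into polynomial contraction in $d$.

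Complementing the Fourier approach, I would exploit the self-similarity inherited from the Narayana base: the right-shift formula for $H(n)$ together with the exponential smallness of $\|\alpha h_n\|$ yields an approximate functional equation $\mu \approx \alpha\,\mu + \alpha^3\,S_\eta(\mu)$ in the weak topology, where $\eta$ encodes the limiting residual shift. The reciprocal $\alpha^{-1} \approx 1.466$ is a Pisot number, whose conjugates $\theta^{-1},\bar\theta^{-1}$ lie strictly inside the unit disk; such algebraic parameters classically drive fractal behavior on the Fourier side. The attempted pointwise non-differentiability argument would iterate this approximate self-similarity around a dense set of ``algebraic'' base points of the orbit $\{\alpha h_n \bmod 1\}$ to produce bounded but nowhere-linear oscillations of $u$, in the style of Takagi-type functions.

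The main obstacle, as the paper explicitly concedes, is the oscillatory regime $n \ll \log|d|$ of the Fourier recurrence. There the phases $e(d\alpha h_n)$ are effectively arbitrary, and extracting a \emph{strictly} positive Lyapunov exponent for the associated cocycle --- uniformly in $d$, or even for a density-one set of $d$ --- seems to require new arithmetic input about the distribution of $\{d\alpha h_n\}_n$ modulo $1$ for $n$ up to the critical scale. Turning Fourier estimates into pointwise non-differentiability at a specified dense set of points, rather than just a global failure to be $C^1$, demands a refined and localized version of the matching lower bound, which is subtler still.
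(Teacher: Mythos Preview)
The statement you are addressing is a \emph{conjecture} in the paper, not a theorem: the paper gives no proof and explicitly says ``We have been unable to answer this question.'' So there is no paper proof to compare your proposal against.

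Your proposal is not a proof either, and you acknowledge as much. It is a coherent research plan that correctly identifies the two Fourier-analytic ingredients the paper itself singles out (summability of $|\hat\mu(d)|$ for continuity, divergence of $\sum |d\,\hat\mu(d)|^2$ for non-differentiability) and correctly locates the difficulty in the oscillatory regime $n \ll \log|d|$, which is precisely the obstruction the paper names: ``a detailed understanding of the numbers $d\alpha h_n \bmod 1$ for $n \ll \log d$, which does not look easy.'' The transfer-matrix / Lyapunov-exponent framing is a natural way to organize the problem, and the split at the critical scale $n_0 \sim \log|d|/\log|\theta|$ is the right one.

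The genuine gap is exactly the one you flag yourself: establishing a strictly positive Lyapunov exponent for the cocycle in the oscillatory regime, uniformly (or for enough $d$), is the entire content of the conjecture, and nothing in your outline supplies it. The self-similarity heuristic and the Pisot observation are suggestive but do not by themselves yield the needed quantitative control. So the proposal should be read as a reasonable roadmap toward an open problem, not as a proof; it neither proves nor disproves the conjecture, and it does not go beyond what the paper already identifies as the bottleneck.
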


This is because the convergence of  $\sum |\hat \mu(n)|$ guarantees continuity, while the divergence of $\sum |n \hat \mu(n)|^2$ guarantees non-differentiability.
\\

We also remark on the relationship of Lemma \ref{absolute continuity} with the Ulam sequence.

\begin{remark}

Conditional on the hypotheses that the Ulam sequence grows at most linearly and that Steinerberger's signal $\alpha$ is irrational, if the limit distribution $\alpha u_n \mod 1$ exists them from this lemma it would follow it is absolutely continuous.
\end{remark}

The problem of determining the natural density of this sequence was considered by Ulam. Muller predicted it to be zero, but more extensive recent computations are suggestive that it is positive \cite{steinerberger2015hidden}. If so we could apply Theorem \ref{absolute continuity} with $C = 1$ since the sequence is increasing by definition. Before Steinerberger's discovery the observed positive density seemed inconsistent with heuristic analysis, but taking the hidden distribution into account Gibbs \cite{gibbsheuristic} manages to heuristically explain the linear growth. With that in mind, it seems likely to us that if these hypotheses about the Ulam sequence are ever proved they will be proved at the same time - though they seem currently out of reach.

\newpage
\section{A closer look at Figure 2}\label{closerlook}
\

In this section we will look at some strange physical features of this distribution. We print its picture again for convenience:

\begin{figure}[h]\label{h3fig2}
\centering
  \includegraphics[width=300pt]{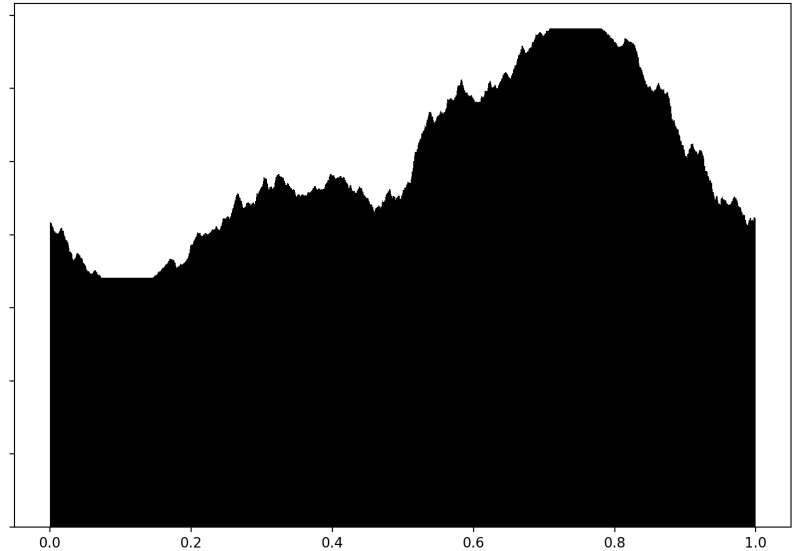}
  \caption{Copy of Figure 2, a histogram of $\alpha H(n) \mod 1$ for $n$ up to $10^7$}
\end{figure}

Notice how there is a ``valley" and a ``hill", and each has an interval where they look flat. Nothing is lower than the valley or higher than the hill, and the height of the valley is exactly half the height of hill. There also seems to be a symmetry between the valley and the hill as if they are figure and ground to each other - here is a zoomed in picture of how they fit:

\begin{figure}[h]
\centering
  \includegraphics[width=250pt]{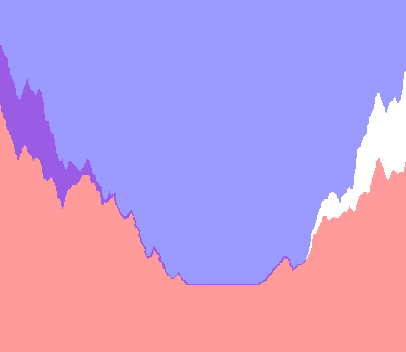}
  \caption{The valley is in red and a vertically reflected copy of the hill in blue}
\end{figure}

They fit perfectly, even beyond their flat interval, althought they eventually drift apart. Let us sketch explanations for all these observations (though we welcome the reader to figure out these puzzles on their own).

Because $H(n)$ is the right shift of $n$ in the Narayana base, for each $m$ there is either exactly one or exactly two $n$'s with $H(n) = m$. There is always at least one solution - the left shift of $m$ - and the $m$'s that have two solutions are precisely the ones of the shape $m=H(n)$ if $n$ is a number that ends with a $1$ in its Narayana base representations - in which case the two solutions will be this $n$ and the left shift of $m$. This relates to why the height of the hill is exactly twice that of the valley: eventually we will see that each $m$ such that $m \alpha \mod 1$ is around the valley has exactly one solution, and each $m$ such that $m \alpha \mod 1$ is around the hill has exactly two.

Let us then write the sequence $H(n)$ as a union of the sequence of all natural numbers with the sequence of the numbers of the shape $m=H(n)$ where $n$ ends with a $1$ in its Narayana base representation, which in turn breaks down the the distribution of $\alpha H(n) \mod 1$ as a sum of two distributions. Because $\alpha$ is irrational, the sequence $\alpha n \mod 1$ where $n$ goes over all natural numbers is equidistributed, so the former distribution is uniform. This uniform component gives a flat baseline for the overall distribution - now all we have to do to explain the flatness of the valley is to prove the other component is zero in a certain interval.

But for any $n$ that ends with a $1$ in the Narayana base it can't have the next two digits $2$ and $3$. And remember $\|\alpha h_i\|$ decreases rapidly, so if $n$ is a sum of $h_i$'s, $\alpha n$ will be the sum of the associated $\alpha h_i$'s, which will generally be small. Based on that one can give a bound to where $\alpha H(n) \mod 1$ of such $n$ can be (we have to be a bit careful and account for the ``signs" of the first few $\alpha h_i \mod 1$, getting a different upper and lower bound), which suffices to prove the existence of an interval in which $\alpha H(n) \mod 1$ of such $n$ can never arrive, which corresponds to the valley. The exact endpoints of this interval however seem hard to compute, since they involve an infinite sum that takes into account the signs of $\theta^n+\bar{\theta}^n$, where $\theta$ is a non-real root of $x^3= x^2+1$.

Let us now explain the fit between the valley and the hill. We showed how the distribution $\mu$ can be seen as a sum of an uniform distribution and the distribution only over the $n$'s that appear twice on the $H$ sequence - call that distribution $\eta_1$. But conversely, we can see $\mu$ as two times an uniform distribution minus the numbers that appear only once in the $H$ sequence - call the distribution of those numbers $\eta_2$ (this subtraction corresponds to the vertical reflection of the hill). Now proving that the valley and the hill fit into each other is the same as proving that there is a shift of $\eta_1$ that is equal to $\eta_2$ in a certain interval.

But one can show that $m$ appears in the sequence associated to $\eta_2$ if and only if $m+2$ appears in the sequence associated to $\eta_1$, except for a small proportion of exceptions. This will show that the distribution $\eta_1$ is roughly equal to the shift of $\eta_2$ by $2 \alpha$ - indeed one may confirm in the picture that the horizontal distance between the valley and the hill is equal to $2 \alpha \mod 1$. If $I$ is the interval of the hill in $\mu_2$, it suffices to prove that any exceptional $m$, the number $\alpha m \mod 1$ lies outside $I$. 

Playing a bit with the Narayana base it is easy to find a bunch of restrictions on such exceptional $m$. They never have the digits $2$ or $3$ in the Narayana base, and they have the property that the digit $1$ appears if and only if the digit $4$ appears. Again, because of the fast decrease of $\| \alpha h_n\|$, it is easy to use these first conditions on the first few digits to show that $\alpha m \mod 1$ avoids a certain interval for any exceptional $m$. 

Because this condition takes into account a few more digits than before, it is not hard to believe it makes sure that $\alpha m \mod 1$ is always outside a slightly larger interval than I (hence the fit going a bit beyond $I$), which completes our explanations.

Some mysteries about this picture remain, as we still don't know for sure if the function we are looking at is continuous.


\section{Further $H$ sequences, and the more general problem}\label{htype}
\

Let us begin this section overviewing the signals in generalized $H$ sequences for $d \le 5$.

For $d = 1$ the sequence $F(n)$ is defined by $F(1) = 1$ and then the recursion

$$F(n) = n - F(n-1).$$

It has the closed formula $F(n) = \lceil \frac{n}{2} \rceil$, which can also be seen as a right shift of $n$ in base $2$ (with $2^0$ is shifted to $1$). So for instance from Theorem \ref{uniform} it follows easily that $\alpha F(n) \mod 1$ is uniform for any irrational $\alpha$.

The $d=2$ sequence is defined by $G(1) = 1$ and

$$G(n) = n - G(G(n-1)).$$

The number $G(n)$ can be proven to be the right shift of $n$ in the Fibonacci base (a famous procedure for converting kilometers to miles!). But this case is still exceptional for having the simple closed formula $G(n) = \lceil \frac{n}{\phi} \rceil$ (no such formula holds for the $H$ sequence we previously overviewed). This allows us to understand exactly the signal it has at $\alpha = \frac{1}{\phi}$ (the root of $x^2+x=1$), and the associated distribution of $\frac{1}{\phi}  G(n)\mod 1$:

\begin{figure}[h]
\centering
  \includegraphics[width=250pt]{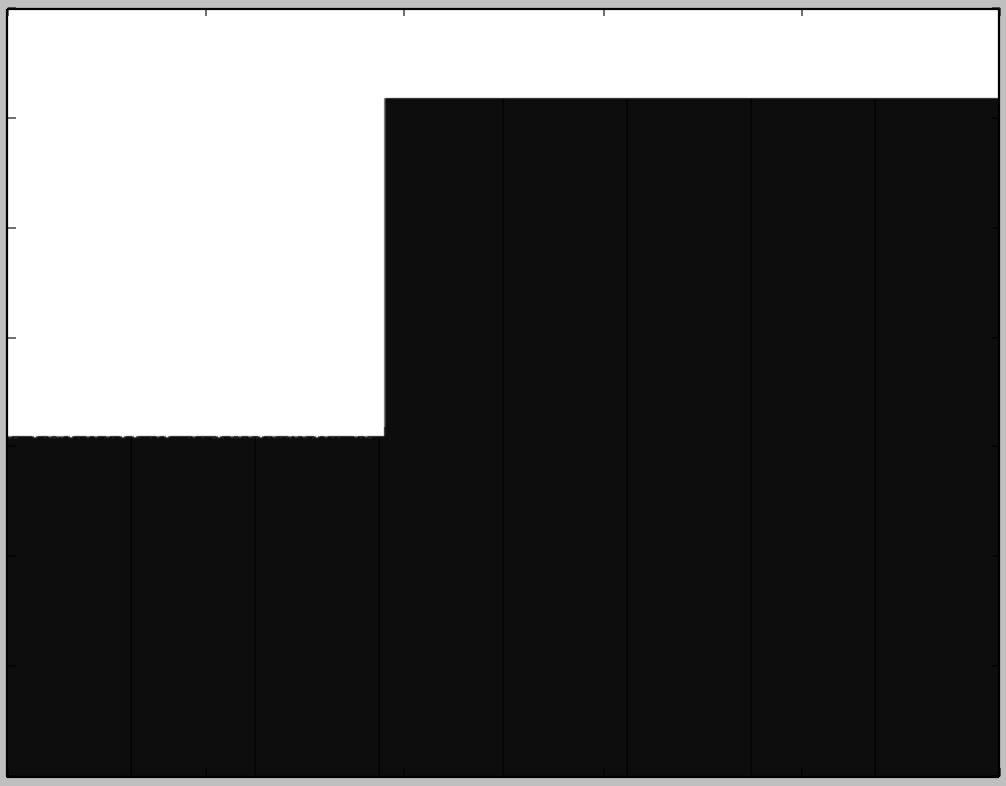}
  \caption{Histogram of $\frac{1}{\phi} G(n) \mod 1$ for $n$ up to $10^7$}
\end{figure}

For a quick proof that this distribution is a step function, notice that $\frac{1}{\phi} G(n)= (\phi-1)G(n)$, so its fractionary part is

$$\{\frac{1}{\phi} G(n)\} = \{\phi G(n) \} = \{ n - \phi(-\{\frac{n}{\phi} \} +1)\} = \{-\phi (1-\{\frac{n}{\phi}\}) \}.$$

But $\frac{1}{\phi}$ is irrational, so $(1-\{\frac{n}{\phi}\})$ is equidistributed in $[0,1]$, so $-\phi (1-\{\frac{n}{\phi}\})$ is equidistributed in $[-\phi, 0]$. The fractionary part of that distribution is exactly the step function we see in the picture. With the formula $G(n) = \lceil \frac{n}{\phi} \rceil$ it is also simple to prove that the distribution of $\beta G(n) \mod 1$ is not uniform are precisely when $\beta \in \mathbb{Q}[\phi]$, and to describe those limit distributions.

Now the sequence with $4$ iterations:

$$I(n) = n - I(I(I(I(n-1)))).$$

It has a signal at $\alpha$ the positive root of $x^4+x=1$, with the following picture for the distribution of $\alpha I(n) \mod 1$:

\begin{figure}[h]
\centering
  \includegraphics[width=250pt]{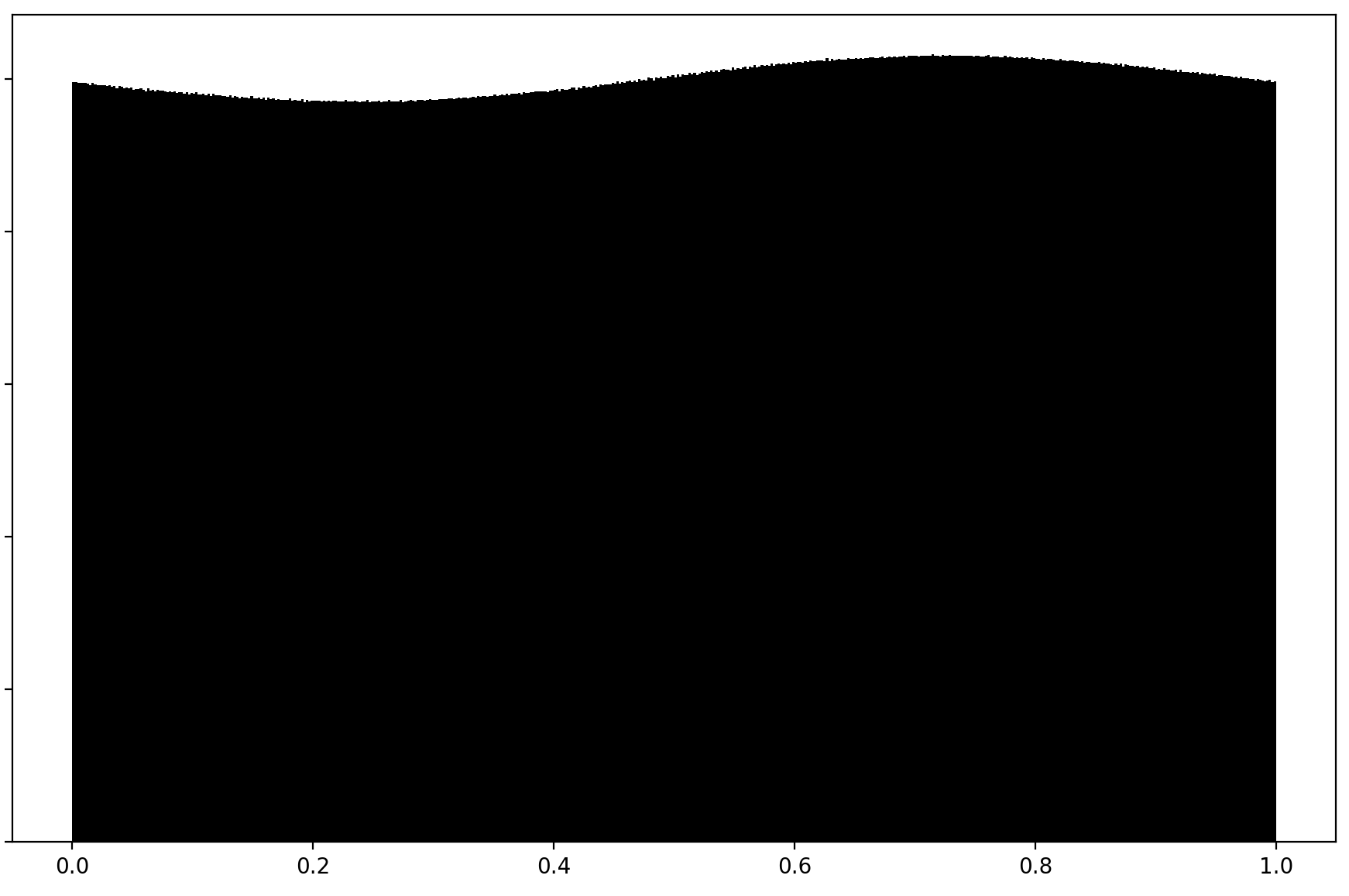}
  \caption{Histogram of $\alpha I(n) \mod 1$ for $n$ up to $10^7$}
\end{figure}

The existence of this limit distribution for the $I$ sequence follows from Theorem \ref{replace}. The number $I(n)$ is equal to the right shift of $n$ when written in base $i_n$, where $i_n$ is a sequence satisfying the linear recurrence $i_{n} = i_{n-1}+i_{n-4}$. This sequence is of finite signature, and it is easy to show that the root $\alpha$ of $x^4+x=1$ has the property that $\|\alpha i_n \| \rightarrow 0$ quickly, and that this is false for any $\beta$ outside $\mathbb{Q}[\alpha]$, so Theorems \ref{replace} and \ref{uniform} apply. The same methods as the $H$ sequence can also prove that this distribution is not uniform and is absolutely continuous. It does look like the Fourier coefficients of this distribution decrease faster than in the $H$ case, making it more smooth - it seems to be at least differentiable, but only finitely many times.

The next case is the $J$ sequence, with 5 iterations has the observed distribution at the signal $\alpha$ root of $x^5+x=1$ (which is also root of $x^3-x-1=0$):

$$J(n) = n - J(J(J(J(J(n-1)))))$$

\begin{figure}[h]
\centering
  \includegraphics[width=250pt]{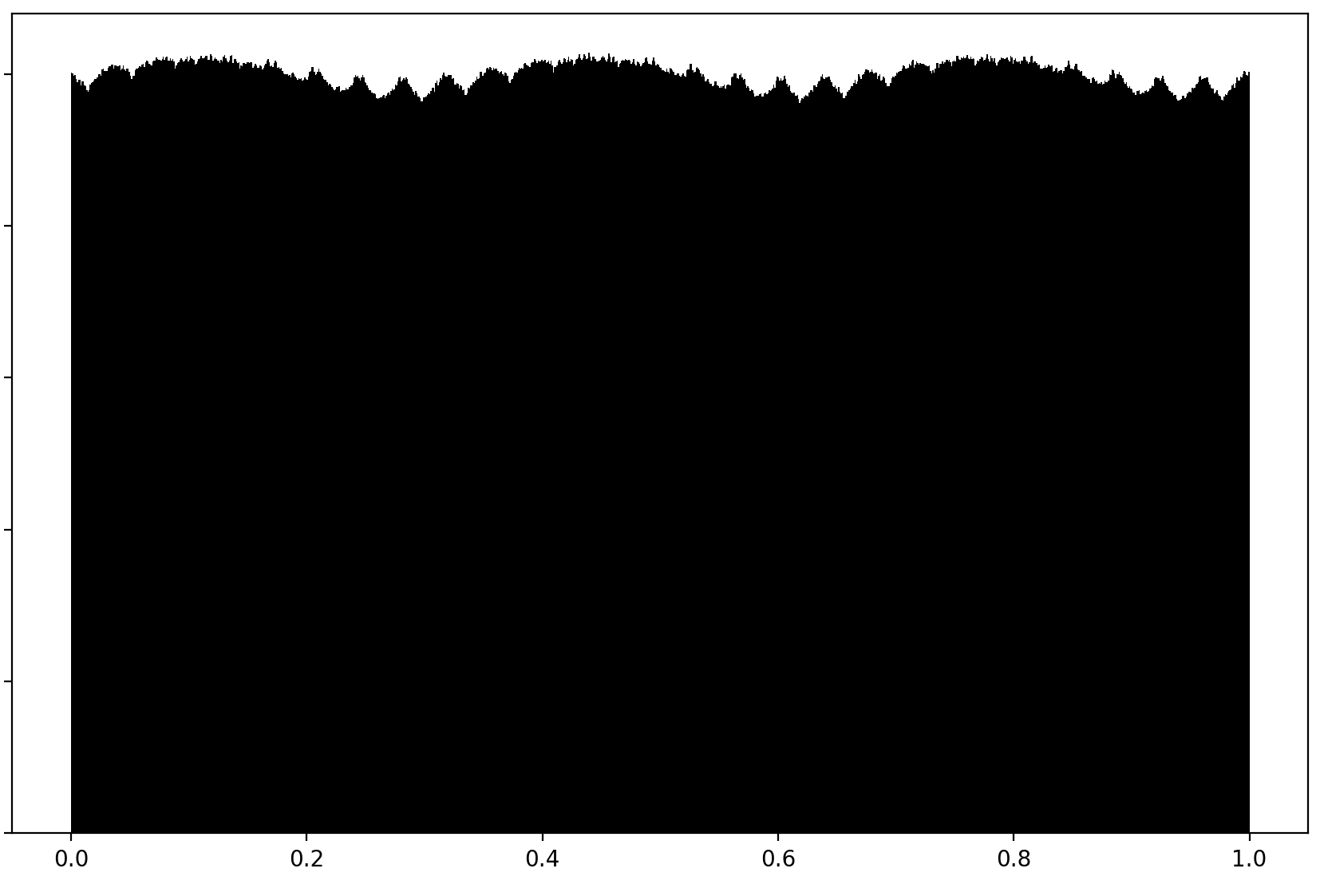}
  \caption{Histogram of $\alpha J(n) \mod 1$ for $n$ up to $10^7$}
\end{figure}

Again $J(n)$ is the right shift in the base of a recurrent sequence $j_n$ which is of finite signature and has the property $\|\alpha j_n \| \rightarrow 0$, so \ref{replace} and \ref{uniform} apply.

The cases $d \ge 6$ however have a different behavior - it is still the case that these sequences have a closed formula as the right shift of a linear recurrence $a_n$, but this recurrence no longer has any irrational numbers with the property $\|\alpha a_n \| \rightarrow 0$. The main culprit is that for $d \ge 6$ the equation $x^d+x =1$ has more than $1$ root of absolute value greater than $1$.

This is related to the concept of a Pisot-Vijayaraghavan number - which is defined to be an algebraic real number $\alpha$ with absolute value more than $1$ whose all other algebraic conjugates have absolute value less than $1$. These numbers have been extensively studied \cite{vijayaraghavan_1941, Pisot1938LaRM, salem1944} for instance Pisot \cite{Pisot1938LaRM} proved they are precisely the ones with the property that $\| \alpha^n \| \rightarrow 0$ quickly. Another incredible property of these numbers conjectured by Vijayaraghavan and proved by Salem \cite{salem1944}, is that the limit of a sequence of Pisot-Vijayaraghavan numbers is still Pisot-Vijayaraghavan.

 If $\alpha$ a Pisot-Vijayaraghavan number, and $a_n$ a linear recurrence of integers with the same characteristic polynomial as $\alpha$, one can easily show that $\| \alpha a_n \| \rightarrow 0$ quickly. The converse however is a bit more subtle --- there are numbers with this property that are not Pisot-Vijayaraghavan. The following theorem lays out a more precise description of the real numbers $\alpha$ and linear recurences $a_n$ with $\| \alpha a_n \| \rightarrow 0$:

\begin{theorem}\label{alpharecurence}

Let $P$ be an irreducible monic polynomial in $\mathbb{Z}[x]$ and let $K \hookrightarrow \mathbb{C}$ be the splitting field of $P$. We will say a field automorphism $\sigma \in Gal(K/\mathbb{Q})$ is ``big" if there exist (not necessarily distinct) roots $\alpha_i, \alpha_j$ of $P$ satisfying $|\alpha_i|, |\alpha_j| \ge1$ such that $\sigma (\alpha_i) = \alpha_j$. Let $a_n$ be a non-zero linear recurrent sequence of integers with characteristic polynomial $P$.

If $\alpha$ is a real number such that

$$\|\alpha a_n\| \rightarrow 0,$$
then $\alpha \in K$ and $\sigma (\alpha) = \alpha$ for each big automorphism $\sigma$ of $K$. 

And as an approximate converse, if $\alpha\in K$ is fixed by each big automorphism of $K$, then there is a non-zero integer $d$ such that $\|d\alpha a_n\| \rightarrow 0$ quickly.

\end{theorem}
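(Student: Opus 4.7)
The plan is to analyze both directions through the closed form $a_n = \sum_{i=1}^{d} c_i \alpha_i^n$, where $\alpha_1,\dots,\alpha_d$ are the roots of $P$ and the coefficients $c_i \in K$ are determined by the initial values of $a_n$. Since $P$ is irreducible and $a_n$ is not identically zero, all $c_i$ are nonzero (if some $c_i = 0$, then by Galois transitivity every $c_j$ would be, forcing $a_n \equiv 0$), and the integrality of $a_n$ forces the equivariance $\sigma(c_i) = c_j$ whenever $\sigma(\alpha_i) = \alpha_j$ (by uniqueness of the Vandermonde closed form).

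For the forward direction, I would let $a_n'$ be the closest integer to $\alpha a_n$, so $\alpha a_n - a_n' \to 0$. Applying the degree-$d$ recurrence operator with characteristic polynomial $P$ to $a_n'$ yields an integer-valued sequence that also tends to zero, hence equals zero for every large $n$; thus $a_n'$ eventually satisfies the same recurrence and has a closed form $a_n' = \sum_i c_i' \alpha_i^n$. The identity $\alpha a_n - a_n' = \sum_i (\alpha c_i - c_i')\alpha_i^n \to 0$, after discarding the $o(1)$ contribution of roots with $|\alpha_i| < 1$, forces $\alpha c_i = c_i'$ for every $\alpha_i$ with $|\alpha_i| \ge 1$ (using the standard fact that $\sum_{|\alpha_i| \ge 1} e_i \alpha_i^n \to 0$ implies every $e_i = 0$). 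In particular $\alpha = c_i'/c_i \in K$. For any big $\sigma$ with $\sigma(\alpha_i) = \alpha_j$ (both of absolute value at least one), applying $\sigma$ to $\alpha c_i = c_i'$ and using the equivariances $\sigma(c_i) = c_j$, $\sigma(c_i') = c_j'$ gives $\sigma(\alpha) c_j = c_j' = \alpha c_j$; since $c_j \neq 0$, we conclude $\sigma(\alpha) = \alpha$.

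For the converse, assume $\alpha \in K$ is fixed by every big automorphism. I will construct a rational-valued sequence $b_n$ approximating $\alpha a_n$ up to an exponentially small remainder. Define coefficients $\tilde c_i$ on the roots: set $\tilde c_i = \alpha c_i$ whenever $|\alpha_i| \ge 1$, and for each remaining root $\alpha_k$ pick any $\sigma$ with $\sigma(\alpha_i) = \alpha_k$ for some big $\alpha_i$ (which exists by transitivity of the Galois action on the roots of an irreducible polynomial) and set $\tilde c_k := \sigma(\alpha c_i) = \sigma(\alpha) c_k$. This is well-defined: if $\sigma_1, \sigma_2$ are two choices, then $\sigma_2^{-1}\sigma_1$ maps a big root to a big root, hence is big, hence fixes $\alpha$. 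By construction the collection $\tilde c_i$ is Galois-equivariant, so $b_n := \sum_i \tilde c_i \alpha_i^n$ is fixed by $\mathrm{Gal}(K/\mathbb{Q})$ and therefore rational. The error $\alpha a_n - b_n = \sum_{|\alpha_i| < 1}(\alpha c_i - \tilde c_i)\alpha_i^n$ involves only roots of absolute value strictly less than one, so it decays exponentially. Since $b_n$ satisfies the integer recurrence $P$, its denominators are controlled by those of $b_0, \dots, b_{d-1}$, so some fixed nonzero integer $d$ makes $d b_n \in \mathbb{Z}$ for all $n$, yielding $\|d \alpha a_n\| \to 0$ exponentially.

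I expect the main obstacle to be the linear-independence step in the forward direction. Roots with $|\alpha_i| > 1$ are handled by a dominant-term argument, but roots of absolute value exactly one give rise to an almost-periodic trigonometric sum $\sum_{|\alpha_i| = 1} e_i \alpha_i^n$ whose tending to zero must be shown to force every coefficient to vanish. This can be argued via Cesaro averaging of $|\sum e_i \alpha_i^n|^2$, with extra care when ratios $\alpha_i/\alpha_j$ are roots of unity, but it is the one place in the proof that leaves pure algebra. The Galois-descent bookkeeping for the converse is delicate but straightforward once the correct definition of $\tilde c_i$ is found.
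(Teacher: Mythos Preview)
Your proposal is correct and follows essentially the same route as the paper: pass to the nearest-integer sequence, show it eventually satisfies the same recurrence, compare closed forms to get $c_i' = \alpha c_i$ on the large roots, and then use Galois equivariance of the $c_i$ and $c_i'$ to force $\sigma(\alpha)=\alpha$ for big $\sigma$; for the converse, manufacture Galois-equivariant coefficients $\tilde c_i$ agreeing with $\alpha c_i$ on the large roots and clear denominators. Your indexing of the converse by roots (defining $\tilde c_k$ via any $\sigma$ sending a large root to $\alpha_k$ and checking well-definedness) is a mild repackaging of the paper's sum over $\mathrm{Gal}(K/\mathbb{Q})$, but the content is identical.

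One point where you are actually more careful than the paper: you flag the step ``$\sum_{|\alpha_i|\ge 1} e_i\alpha_i^n \to 0$ implies every $e_i=0$'' as needing an argument when roots lie exactly on the unit circle. The paper simply asserts this. Your Ces\`aro-averaging idea is the right fix, and in fact no extra care for root-of-unity ratios is needed: for distinct roots $\alpha_i\neq\alpha_j$ on the unit circle one has $\alpha_i\overline{\alpha_j}\neq 1$, so $\frac{1}{N}\sum_{n<N}(\alpha_i\overline{\alpha_j})^n\to 0$ regardless of whether the ratio is a root of unity, and the Ces\`aro limit of $|\sum e_i\alpha_i^n|^2$ is just $\sum|e_i|^2$.
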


So Theorem \ref{alpharecurence} states that the set of of real numbers $\alpha$ of interest is roughly the fixed field of the subgroup of $Gal(K/\mathbb{Q})$ generated by big automorphisms. An important particular case is when $P$ has a single root satisfying $|\alpha| \ge 1$ (that is, when $\alpha$ is a Pisot-Vijayaraghavan number). In this case the big automorphisms are precisely those that fix $\alpha$, so the subfield of $K$ fixed by those automorphisms is precisely $\mathbb{Q}[\alpha]$, which is indeed the set of interest in the cases $d \le 5$ of the $H$ sequence, all of which are atatched to Pisot-Vijayaraghavan numbers.

\begin{proof}

Let $b_n$ be the closest integer to $\alpha a_n$. Because $b_n$ is close to $\alpha a_n$, the $b_n$ ``nearly" satisfies the same linear recurrence as $a_n$. But the $b_n$ are integers, and the linear recurrence has integer coefficients, so for large $n$ the recurrence is satisfied exactly. So $b_n$ is also linear recurrent with characteristic polynomial $P$. Now let

$$a_n = \sum_i c_i \alpha_i^n \text{ and } b_n = \sum_i c_i' \alpha_i^n$$
be the closed formulas for $a_n$ and $b_n$ respectively, where $\alpha_i$ are the roots of $P$. By applying the linear operator of characteristic polynomial $\frac{P(x)}{(x-\alpha_i)}$ to these sequences it follows each $c_i, c_i' \in \mathbb{Q}[\alpha_i] \subset K$. And taking the limit of the ratio between these formulas proves that $\alpha \in K$.

Now because the $a_n$ are integers we have $\sigma(a_n) = a_n$, therefore

$$\sum \sigma (c_i) \sigma (\alpha_i)^n = \sum c_i \alpha_i^n.$$

From the uniqueness of this closed formula, we get that if $\sigma (\alpha_i) = \alpha_j$, then $\sigma (c_i) = c_j$. That is, the rationality of $a_n$ requires a certain consistency is required between the $c_i$. Similarly, $\sigma (c'_i) = c'_j$. 

But we know that $b_n - \alpha a_n \rightarrow 0$, which implies that when we write $b_n - \alpha a_n$ as a combination of powers of $\alpha_i$, the coefficient associated to each root $|\alpha_i|\ge 1$ is zero. That is, $c_i' = \alpha c_i$ for any root $|\alpha_i|\ge1$.

Now for a given big automorphism $\sigma$, let $\sigma(\alpha_i) = \alpha_j$ where $|\alpha_i|, |\alpha_j| \ge 1$ we get:

$$\alpha c_j = c_j' = \sigma (c_i') = \sigma (\alpha c_i) = \sigma (\alpha) \sigma (c_i) = \sigma (\alpha) c_j$$

And we know that $c_j \neq 0$. This is because $P$ is irreducible, so the action of $Gal(K/\mathbb{Q})$ on the the roots $\alpha_i$ is transitive. Because of the consistency between the $c_i$'s this means that if one of them were $0$, all of them would be $0$, which would contradict that the sequence $a_n$ is non-zero. Canceling out $c_j$ yields

$$\alpha = \sigma (\alpha).$$

This holds for any big automorphism of $K$, as desired.
\\

For the converse, let $\alpha \in K$ be fixed by each big automorphism of $K$. The first thing we notice is that complex conjugation is a big automorphism, so $\alpha$ must be real - it is therefore meaningful to talk about $\|\alpha a_n\|$. Let $\alpha_1$ be a root with absolute value at least $1$ of $P$. From the consistency between the coefficients $c_i$ of $a_n$ and the transitivity of the action of $ Gal(K/\mathbb{Q})$, we can write

$$a_n = \sum_{\sigma}  \sigma(c_1) \sigma (\alpha_1)^n,$$
where the sum is over every automorphism of $K/\mathbb{Q}$. And define

$$b_n = d\sum_{\sigma} \sigma(\alpha) \sigma(c_1) \sigma(\alpha_1)^n$$
where $d$ is an integer to be chosen soon. Its coefficients satisfy the same consistency check as $a_n$, so $\sigma b_n = b_n$ for each  $\sigma\in Gal(K/\mathbb{Q})$, which implies the $b_n$ are rationals. Further, if we choose $d$ so that $d\alpha c_1$ is an algebraic integer (for instance the leading coefficient of the minimal polynomial of $\alpha c_1$ works), then $b_n$ is a combination of algebraic integers, so it is not only rational but an integer.

Now we have the equation

$$d\alpha a_n - b_n =  d\sum_{\sigma} (\alpha -\sigma(\alpha))  \sigma(\alpha_1)^n$$
and we also know that each $\sigma$ such that $|\sigma(\alpha_1)|\ge 1$ is a big automorphism (since it takes $\alpha_1$ to another large root), so for each such $\sigma$ the associated coefficient $(\alpha -\sigma(\alpha))$ vanishes. It follows that

$$d\alpha a_n - b_n \rightarrow 0$$
which means $\|d\alpha a_n\| \rightarrow 0$ quickly, as desired.

\end{proof}

The following statement gives a setting in which integrality is clean, revealing some additional algebraic structure:

\begin{theorem}\label{integrality}

Let $P$ be a monic polynomial in $\mathbb{Z}[x]$. Consider the set $R$ of real numbers $\beta$ such that $\|\beta a_n\| \rightarrow 0$ for every sequence of integers $a_n$ satisfying the linear recurrence of characteristic polynomial $P$. Then:

\begin{itemize}

\item $R$ is a ring.

\item If $P(0) = \pm 1$, the ring $R$ is integral over $\mathbb{Z}$ (that is, each of its elements is an algebraic integer).

\end{itemize}

Remark: The condition that $\|\beta a_n\| \rightarrow 0$ for every sequence satisfying a certain recurrence rather than a particular one may seem much stronger, but in many cases it is not. Indeed, $R$ is simply the set of numbers satisfying the condition for the sequence with initial terms $(0,...,0,1)$, since any other sequence satisfying the recurrence is an integral combination of shifts of this sequence.

\end{theorem}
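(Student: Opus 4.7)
My plan is first to check that $R$ is closed under the ring operations (where only multiplication is nontrivial), and then to prove integrality by realizing each $\beta \in R$ as an eigenvalue of an integer matrix obtained from its action on the $\mathbb{Z}$-lattice $L$ of all integer sequences satisfying the $P$-recurrence.

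For part 1, closure under addition and negation follows immediately from $\|(\beta_1+\beta_2) a_n\| \le \|\beta_1 a_n\| + \|\beta_2 a_n\|$ and $\|-\beta a_n\| = \|\beta a_n\|$. For multiplicative closure, take $\beta_1,\beta_2 \in R$ and $a \in L$, and let $b_n$ be the closest integer to $\beta_1 a_n$. Exactly as in the proof of Theorem \ref{alpharecurence}, the $P$-recurrence applied to the reals $\beta_1 a_n$ vanishes identically, while $b_n - \beta_1 a_n \to 0$; hence the $P$-recurrence applied to the integers $b_n$ is an integer tending to zero, and so vanishes for all large $n$. The shifted tail of $b$ is therefore an element of $L$, so $\|\beta_2 b_n\| \to 0$, and combined with $\|\beta_2(\beta_1 a_n - b_n)\| \le |\beta_2|\cdot|\beta_1 a_n - b_n| \to 0$ we obtain $\|\beta_1 \beta_2 a_n\| \to 0$.

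For part 2, the hypothesis $P(0) = \pm 1$ makes the inverse recurrence also have integer coefficients, so $L$ is a rank-$d$ $\mathbb{Z}$-lattice (where $d = \deg P$) closed under both the shift $S$ and its inverse $S^{-1}$. For fixed $\beta \in R$, define $T_\beta: L \to L$ by rounding $\beta a_n$ to the nearest integer on any tail where $\|\beta a_n\| < 1/2$, and extending backwards to all of $L$ by $S^{-1}$. Because the rounding errors $\|\beta a_n\|$ decay to zero, roundings add exactly for large $n$, so $T_\beta$ is $\mathbb{Z}$-linear on tails, hence on all of $L$ by backward extension, and it clearly commutes with $S$. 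Choosing a $\mathbb{Z}$-basis of $L$ produces a matrix $M_\beta \in M_d(\mathbb{Z})$ with monic characteristic polynomial $Q(x) \in \mathbb{Z}[x]$.

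It remains to show $Q(\beta) = 0$. Since $M_\beta$ commutes with the shift matrix $M_S$, both preserve each generalized eigenspace of $M_S$ on $L \otimes \mathbb{C}$, and those eigenspaces correspond to the roots $\alpha_j$ of $P$. The condition $P(0) = \pm 1$ forces at least one root with $|\alpha_j| \ge 1$. Writing any $a \in L$ in its closed form $a_n = \sum_j p_j(n) \alpha_j^n$ and $T_\beta(a)$ as $\sum_j q_j(n) \alpha_j^n$, the relation $T_\beta(a)_n - \beta a_n \to 0$ together with the standard fact that a nontrivial combination of sequences $n^k \alpha_j^n$ with $|\alpha_j| \ge 1$ cannot tend to zero forces $q_j = \beta p_j$ for every root with $|\alpha_j| \ge 1$. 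Hence $M_\beta$ acts as multiplication by $\beta$ on every ``big'' generalized eigenspace, so $\beta$ is an eigenvalue of $M_\beta$ and a root of $Q$, making $\beta$ an algebraic integer. The main technical subtlety I expect is the rounding ambiguity in defining $T_\beta$ on the initial segment of a sequence, which is exactly what $P(0) = \pm 1$ resolves by letting me fix $T_\beta$ unambiguously on the tail and extend back via $S^{-1}$; repeated roots of $P$ are handled by the same argument with generalized eigenvectors $\binom{n}{k}\alpha_j^n$ in place of $\alpha_j^n$.
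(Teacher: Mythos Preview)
Your proof is correct and follows essentially the same approach as the paper. Both proofs handle the ring structure identically (round $\beta_1 a_n$ to get an integer sequence eventually in $L$, then apply $\beta_2$), and for integrality both use $P(0)=\pm 1$ to run the recurrence backwards and turn ``round then extend'' into a well-defined $\mathbb{Z}$-linear endomorphism of the rank-$d$ lattice $L$; the only difference is that the paper invokes the abstract fact that a ring acting faithfully on a finitely generated $\mathbb{Z}$-module is integral, whereas you unpack this by exhibiting $\beta$ explicitly as an eigenvalue of the integer matrix $M_\beta$ via the big generalized eigenspaces of the shift --- which is exactly the Cayley--Hamilton argument underlying that abstract fact.
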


\begin{proof}

Closure under addition is clear. For multiplication, assume that$ \alpha, \beta \in R$. Then if $a_n$ is any sequence satisfying the recurrence $P$, because $\beta \in R$, the sequence $b_n$ of the closest integer to $\beta a_n$ eventually satisfies the recurrence too (repeating previous arguments we've made). And because $\alpha \in R$, it follows that $\| \alpha b_n \| \rightarrow 0$. That is $\| \alpha (\beta a_n \pm \|\beta a_n\|) \| \rightarrow 0$, which implies $\|\alpha \beta a_n \| \rightarrow 0$. That is for any sequence satisfying the linear recurrence $P$, so $\alpha \beta \in R$, as desired.

For the integrality, the key observation is that when $P(0) = \pm 1$ we can give the set of integer sequences satisfying $P$ the structure of an $R$-module. Let $\alpha \in R$ act on a sequence $a_n$ by taking it to the sequence of closest integers to $\alpha a_n$. We don't get a full sequence right away - this new sequence only satisfies the recurrence for large $n$. But if $P(0)=\pm 1$ one can reconstruct the initial terms of the sequence using the recursion backwards. So we can make $\alpha$ act taking a sequence satisfying the recurrence to another such sequence. Hence the $R$-module structure. Notice that this $R$-module is finitely generated over $\mathbb{Z}$, since we may use the shifts of the sequence with initial terms $(0,...,0,1)$ as a basis. And every ring $R$ with an $R$-module that is finitely generated over $\mathbb{Z}$ is integral, concluding the proof.

The condition $P(0) =\pm 1$ is required; for instance for $P(x) = x-2$ the ring of $\alpha$ such that $\|\alpha 2^n\| \rightarrow 0$ is $\mathbb{Z}[\frac{1}{2}]$, which is not made up of algebraic integers. In fact it is simple to establish integrality over $\mathbb{Z}[\frac{1}{P(0)}]$ generally.

\end{proof}

\begin{example}

Let $a_n = 0,0,0,0,0,1$ for $1\le n \le 6$, and let it satisfy the recurrence $a_n = 3a_{n-1}+6a_{n-2}-4a_{n-3}-5a_{n-4}+a_{n-5}+a_{n-6}$. Then $\| \beta a_n \| \rightarrow 0$ precisely for $\beta \in \mathbb{Z}[\frac{1+\sqrt{13}}{2}]$.

\end{example}

\begin{proof}

The characteristic polynomial of this recurrence is the minimal polynomial of $\frac{1}{2 \cos(\frac{2 \pi}{13})} = \frac{1}{\zeta_{13}+\zeta_{13}^{-1}}$, with conjugates $\frac{1}{2 \cos(k\frac{2 \pi}{13})} = \frac{1}{\zeta_{13}^k+\zeta_{13}^{-k}}$ for $k =1,2,3,4,5,6$. Only two of these are bigger than $1$ in absolute value: $k = 3$ and $k =4$.

The splitting field is the maximal real subfield of $\mathbb{Q}[\zeta_{13}]$. Its Galois group is cyclic and each automorphism can be described by an element $a \in (\mathbb{Z}/(13))^*$, acting as $\sigma(\frac{1}{\zeta_{13}^k+\zeta_{13}^{-k}}) = \frac{1}{\zeta_{13}^{ak}+\zeta_{13}^{-ak}}$ (the elements $a$ and $-a$ describe the same automorphism).

The big automorphisms will be the ones either fixing the big roots $k = 3$ or $4$ or taking one to another. Because $3$ and $4$ are both squares mod $13$, the subgroup generated by big automorphisms are only quadratic residues $\mod 13$. These form a subgroup of index $2$ of our Galois group, whose fixed field is $\mathbb{Q}[\sqrt{13}$] (the only quadratic field in $\mathbb{Q}[\zeta_{13}]$).

From Theorem \ref{integrality} our ring of interest is integral is hence a subring of $\mathbb{Z}[\frac{1+\sqrt{13}}{2}]$. One can verify $\beta = \frac{1+\sqrt{13}}{2}$ indeed has the property $\|\beta a_n \| \rightarrow 0$, by checking that the sequence $b_n$ constructed in theorem $\ref{alpharecurence}$ meant to be the sequence of closest integers to $\beta a_n$ is in fact made up of integers for $d=1$, which completes the proof.

\end{proof}

\begin{example}

Let $a_n = 1,2,3,4$ for $n = 1,2,3,4$ and $a_n = 10 a_{n-2}-a_{n-4}$ for $n>4$. Then $\| \beta a_n \| \rightarrow 0$ precisely for $\beta \in \mathbb{Z}[\sqrt{6}]$.

\end{example}

\begin{proof}
This sequence has characteristic polynomial $P(x) = x^4-10x^2+1$, which is the minimal polynomial of $\sqrt{2}+\sqrt{3}$. Its roots are $\pm \sqrt{2} \pm \sqrt{3}$, and the Galois group of its splitting field $K = \mathbb{Q}[\sqrt{2},\sqrt{3}]/\mathbb{Q}$ is $\{1, -1\}^2$, where $\sigma = (\xi_1, \xi_2) \in  \{1, -1\}^2$ acts on $K$ as $\sigma(\sqrt{2}) = \xi_1 \sqrt{2}$ and $\sigma(\sqrt{3}) = \xi_2 \sqrt{3}$. Then the the big automorphisms in this context are  $(1,1)$ and $(-1,-1)$, which generate a subgroup of index $2$ of the Galois group, whose fixed field is $\mathbb{Q}[\sqrt{6}]$. So the ring of interest is a subring of $\mathbb{Q}[\sqrt{6}]$. From here it is simple to check $\| \sqrt{6}a_n\| \rightarrow 0$ so the ring of interest contains $\mathbb{Z}[\sqrt{6}]$, which largest integral subring of $\mathbb{Q}[\sqrt{6}]$ so it must be our ring.

\end{proof}

Finally, let us apply this to the linear recurrences associated with the generalized $H$ sequences.

\begin{theorem}

Let $d\ge 6$ be an integer and $H$ be the $d$-th hofstadter sequence. Then $\alpha H(n) \mod 1$ is uniform for any irrational $\alpha$.

\end{theorem}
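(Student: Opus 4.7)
The plan combines the replacement-sequence machinery of Section~4 with the Galois-theoretic analysis of Section~8. By the same inductive argument as in the $d=3$ case, $H(n)$ is the right-shift of $n$ written in the base $\{a_i\}$, where $\{a_i\}$ is the linear recurrence $a_n = a_{n-1} + a_{n-d}$ with natural initial conditions; this sequence is of mild signature, so $H$ is the replacement sequence associated with $b_i = a_{i-1}$. By Theorem~\ref{weyl}, uniformity of $\alpha H(n) \bmod 1$ for every irrational $\alpha$ reduces to showing that every nonzero Weyl sum $\frac{1}{n}\sum_{i<n} e(m\alpha H(i))$ tends to $0$, and by Theorem~\ref{uniform} this in turn follows from $\|m\alpha a_i\| \not\to 0$ for every nonzero integer $m$ and every irrational $\alpha$.

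Suppose for contradiction that $\|m\alpha a_i\| \to 0$ for some nonzero $m$ and irrational $\alpha$. Theorem~\ref{alpharecurence} then places $m\alpha$ inside the splitting field $K$ of the characteristic polynomial $P(x) = x^d - x^{d-1} - 1$, as an element fixed by every big automorphism. The proof therefore reduces to the Galois-theoretic claim that for every $d \ge 6$ the fixed field of the subgroup $\langle B \rangle$ of $G := \mathrm{Gal}(K/\mathbb{Q})$ generated by big automorphisms is $\mathbb{Q}$; this forces $m\alpha \in \mathbb{Q}$, contradicting the irrationality of $\alpha$.

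To establish this Galois-theoretic claim, I would proceed in two steps. First, a root analysis for $P$: a numerical verification, bootstrapped from a saddle-point estimate of $P(e^{i\phi + \epsilon})$, shows that for every $d \ge 6$ the polynomial $P$ has at least three roots with $|z| \ge 1$, namely the Pisot-like positive real root $\theta > 1$ together with a complex conjugate pair of modulus marginally greater than $1$. This is the ``phase transition'' at $d = 6$ tied to the inequality $|e^{i\phi} - 1| \le 1$ holding in a neighborhood of $\phi = \pm \pi/3$; hence the set $L$ of large-root indices satisfies $|L| \ge 2$. Second, a group-theoretic check: assuming $P$ is irreducible (true for $d \ge 6$ outside the exceptional case $d \equiv 5 \pmod 6$, where $P$ splits off the cyclotomic factor $x^2 - x + 1$ and the argument is reapplied to the complementary irreducible factor of degree $d-2$) and that the Galois group $G$ contains $A_d$, every $3$-cycle and every transposition of $S_d$ is a big automorphism once $|L| \ge 2$: such a permutation moves at most three indices, so it necessarily fixes at least one element of $L$ and hence preserves an element of $L$. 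Since $3$-cycles generate $A_d \subseteq G$ and, when $G = S_d$, a big transposition supplies the missing odd coset, we conclude $\langle B \rangle = G$, so $K^{\langle B \rangle} = \mathbb{Q}$.

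The main obstacle is controlling the Galois group $G$ uniformly in $d$: proving $A_d \subseteq G$ for all $d \ge 6$ requires locating favorable cycle types in $G$ via factorizations of $P \bmod p$ at well-chosen primes, together with a discriminant calculation to rule out exotic transitive subgroups. Additional bookkeeping is needed in the case $d \equiv 5 \pmod 6$, where the cyclotomic factor $x^2 - x + 1$ contributes a pair of unit-modulus roots to $L$ and one must verify that the complementary irreducible factor of degree $d-2$ still has at least two roots with $|z| > 1$ so that the same $|L| \ge 2$ argument goes through on its splitting field.
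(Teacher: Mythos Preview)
Your outline follows essentially the same route as the paper: reduce via Theorem~\ref{uniform} to showing $\|\beta a_n\|\not\to 0$ for every irrational $\beta$, then use Theorem~\ref{alpharecurence} and the claim that the big automorphisms generate the full Galois group. The paper resolves what you call the ``main obstacle'' not by hunting for cycle types modulo primes but by quoting two classical results: Selmer's work on trinomials gives the irreducibility pattern (irreducible for $d\not\equiv 5\pmod 6$, else $(x^2-x+1)$ times an irreducible of degree $d-2$), and Osada's theorem gives $G=S_d$ (respectively $S_{d-2}$) outright. With $G$ symmetric and at least two roots $\alpha_1,\alpha_2$ outside the unit disk, the paper's generation argument is also a bit more elementary than yours: any permutation factors as (move $\alpha_1$ to its target)$\cdot$(do the rest fixing $\alpha_1$), and each factor is big.

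Two small points to tighten. First, your justification ``a $3$-cycle moves at most three indices, so it necessarily fixes at least one element of $L$'' is false as stated when $|L|\le 3$ (consider the $3$-cycle permuting all of $L$); the correct reason every $3$-cycle or transposition is big once $|L|\ge 2$ is that it either fixes some element of $L$ \emph{or} sends one element of $L$ to another. Second, in the case $d\equiv 5\pmod 6$ you cannot apply Theorem~\ref{alpharecurence} directly, since it requires an irreducible characteristic polynomial; the paper passes to $b_n=a_n+a_{n+3}$, which kills the roots of $x^2-x+1$ and leaves an integer sequence governed by the irreducible degree-$(d-2)$ factor, and only then invokes Theorem~\ref{alpharecurence}. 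Your proposal mentions reapplying the argument to the complementary factor but does not supply this reduction step.
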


We may define a generalized Narayana sequence by $a_i = i$ for $1 \le i \le d$, and

$$a_i = a_{i-1}+a_{i-d}$$
for $i > d$. Then it is simple to prove inductively that $H(n)$ is the right shift of of $n$ in base $a_i$. This equation gives the signature of the sequence $a_i$, hence it is of mild signature and Theorem \ref{uniform} applies. So in order to prove $\alpha H(n) \mod 1$ is uniform for any irrational $\alpha$, it suffices to prove that $\| a_n \alpha \|$ does not converge to zero for any irrational $\alpha$. In order to get that as an application of  Theorem \ref{alpharecurence} we will need two inputs: one about the location of the roots of the characteristic polynomial $x^d-x^{d-1}-1$ of the sequence $a_i$, and another about the Galois structure of this polynomial. First the location of the roots:

\begin{proposition}

For $d \ge 6$ the polynomial $x^{d}-x^{d-1}-1$ has at least two roots strictly outside the unit disk. Indeed, for large $d$ approximately $\frac{d}{3}$ of its roots lie outside the unit disk.

\end{proposition}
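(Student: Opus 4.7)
The plan is to reduce to counting zeros inside the unit disk of the reverse polynomial $Q_d(y) = y^d P_d(1/y) = 1 - y - y^d$, whose zeros with $|y| < 1$ correspond bijectively to zeros of $P_d$ strictly outside. A unit-modulus root $x = e^{i\theta}$ of $P_d$ requires $|x^{d-1}(x-1)| = 1$, hence $|x - 1| = 1$ and $x = e^{\pm i\pi/3}$, and these are actual roots precisely when $d \equiv 5 \pmod 6$; in that case I would apply the argument principle on $|y| = 1 - \eta$ and take the count as $\eta \to 0^+$, so that the boundary roots are not counted.

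The heart of the argument is the winding number of $\gamma(\theta) = Q_d(e^{i\theta}) = (1 - e^{i\theta}) - e^{id\theta}$ around $0$. The geometric key is that $|1 - e^{i\theta}| = 2|\sin(\theta/2)|$ exceeds $1$ precisely on the outer arc $\theta \in (\pi/3, 5\pi/3)$. On that outer arc the first summand dominates, so $\arg\gamma(\theta)$ stays within $\pi/2$ of $\arg(1 - e^{i\theta}) = \theta/2 - \pi/2$ and contributes an argument change of $\approx 2\pi/3$ up to a bounded transition error. On each of the two complementary inner arcs the second summand dominates, so $\arg\gamma(\theta)$ tracks $\arg(-e^{id\theta}) = d\theta + \pi$ to within $\pi/2$ and contributes $\approx d\pi/3$. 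Summing the three pieces, the winding number of $\gamma$ around $0$ is $\frac{d+1}{3} + O(1)$. Since this winding number counts the zeros of $Q_d$ inside the unit disk, it directly yields the asymptotic claim that $\sim d/3$ roots of $P_d$ lie strictly outside.

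The main obstacle is the smallest cases $d = 6, 7$, where the $O(1)$ slack in the crude estimate could in principle allow a winding number as low as $1$. For these I would carry out the transition corrections at $\theta = \pm\pi/3$ explicitly: the value $Q_d(e^{i\pi/3}) = e^{-i\pi/3} - e^{id\pi/3}$ depends only on $d \bmod 6$, so the continuous argument of $\gamma$ across each transition is fully determined, and a direct bookkeeping check then shows the winding number equals $3$ for all $d \in \{6, \ldots, 11\}$ (it stabilizes at $2\lfloor d/6 \rfloor + 1$ in general, matching the $d/3$ asymptotic). Since $P_d$ has only one real root strictly outside the unit disk (a positive root slightly above $1$), any additional outside root forces a complex-conjugate pair, so a winding number of $3$ already gives three roots of $P_d$ strictly outside the unit disk in the small cases and in particular at least $2$, completing the proof for every $d \ge 6$.
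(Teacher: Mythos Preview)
Your approach is correct and genuinely different from the paper's. The paper argues the asymptotic by invoking the Erd\H{o}s--Tur\'an equidistribution theorem for roots of polynomials with small coefficients, combined with the same geometric observation you make (that a root of $x^{d-1}(x-1)=1$ lies outside the unit disk iff $|x-1|<1$, i.e.\ iff it sits on the right third of the circle). For the delicate small cases $d\ge 6$ the paper does \emph{not} compute anything directly: it cites Smyth's theorem that Lehmer's conjecture holds for non-reciprocal polynomials with constant $\theta\approx 1.324$, and then observes that the unique real root of $x^d-x^{d-1}-1$ is already below $\theta$ once $d\ge 6$, forcing at least one further complex-conjugate pair outside. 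Your argument-principle computation on the reciprocal polynomial $Q_d(y)=1-y-y^d$ replaces both cited results by a single self-contained winding-number count, splitting the circle into the arc where $|1-e^{i\theta}|>1$ dominates and the two arcs where $e^{id\theta}$ dominates. This is more elementary and in principle yields an exact count rather than just ``at least two''; the price is the explicit bookkeeping at the crossover points $\theta=\pm\pi/3$, which the paper avoids entirely by outsourcing to Smyth.

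Two small cautions. First, your stated closed form $2\lfloor d/6\rfloor+1$ for the winding number should be double-checked; the transition correction at $\theta=\pm\pi/3$ depends on $d\bmod 6$ in a way that can shift the count by one, and in the case $d\equiv 5\pmod 6$ the curve actually passes through $0$ (since $Q_d(e^{\pm i\pi/3})=0$ there), so the limiting procedure on $|y|=1-\eta$ genuinely matters. None of this affects the method, only the exact formula. Second, your parenthetical that $P_d$ has a single real root outside the unit disk is true but not completely immediate for even $d$ (one must check the negative real root lies in $(-1,0)$); a one-line derivative analysis handles it.
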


\begin{proofsketch} A classic result of Erd\'os and Turán \cite{erdosturan}, \cite{soundararajan2018equidistribution} states roughly that the roots of a monic polynomial with small coefficients all have absolute value close to $1$, and that they are approximately equidistributed around the unit circle. Apply it to our polynomial $x^d - x^{d-1}-1$. And because of the equation

$$1=|x|^{d-1}|x-1|,$$
a root of $1+x^{d-1}=x^{d}$ has absolute value greater than $1$ exactly when $|x-1|<1$. But for $x$ close to the unit circle, this happens when $x$ belongs to the right third of the unit circle (notice that $x = e^{\pm \frac{\pi i}{3}}$ satisfy $|x-1| = 1$). From equidistribution, around $\frac{d}{3}$ of the roots lie there, as desired. 

This proposition is also related to Lehmer's conjecture. This conjecture states that if $P\in \mathbb{Z}[x]$ is not a product of cyclotomic polynomials, then the product of the absolute values of roots outside the unit disk is at least $c$, where $c>1$ is independent of $P$. Our polynomial is not cyclotomic (since it has a real root greater than 1), and each of its roots satisfy $|\alpha| \le 2^{\frac{1}{d}} \rightarrow 1$, so Lehmer's conjecture would imply that our polynomial has many roots of absolute value greater than $1$ for large $d$. And in fact Smyth \cite{smythfact} proved that Lehmer's conjecture holds for any non reciprocal polynomial for $c = \theta \sim 1.324$ the real root of $x^3=x+1$. This applies to our polynomial, hence it suffices to give a complete alternative proof of the proposition, as for $d \ge 6$ the positive real root of $x^d - x^{d-1}-1$ is smaller than $\theta$, therefore at least one other root of absolute value larger than $1$ must exist. A funny observation is that this $\theta$ is exactly the real root of $x^5-x^4-1$, so by very little the polynomial for $d=5$ is not forced to have another root greater than 1, which would erase the hidden signal of the $5$-th Hofstadter sequence.

\end{proofsketch}

And now the Galois group of this polynomial.

\begin{proposition}
\begin{itemize}

\item For $d \neq 5 \mod 6$, the polynomial $x^{d}-x^{d-1}-1$ is irreducible and its Galois group is isomorphic to the full symmetric group $S_d$. 

\item For $d = 5 \mod 6$, the polynomial $x^{d}-x^{d-1}-1$ splits as a product of $x^2-x+1$ and an irreducible factor of degree $d-2$. The Galois group of the irreducible factor is isomorphic to the full symmetric group $S_{d-2}$.

\end{itemize}

\end{proposition}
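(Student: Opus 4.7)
The plan is to prove the three assertions in sequence: the cyclotomic factorization when $d \equiv 5 \pmod{6}$, the irreducibility of the non-cyclotomic factor, and the identification of the Galois group as the full symmetric group.

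First, for the factorization claim, I would work directly with $\zeta = e^{i\pi/3}$, the primitive sixth root of unity with minimal polynomial $x^2 - x + 1$, satisfying $\zeta^2 = \zeta - 1$. Substituting into $P(x) = x^d - x^{d-1} - 1$ gives $P(\zeta) = \zeta^{d-1}(\zeta - 1) - 1 = \zeta^{d-1}\zeta^2 - 1 = \zeta^{d+1} - 1$, which vanishes exactly when $6 \mid d+1$, i.e., $d \equiv 5 \pmod{6}$. This produces the factor $x^2 - x + 1$ precisely in that congruence class, and the complementary factor has the correct degree $d - 2$.

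For irreducibility, I would pass to the reciprocal polynomial $\tilde P(x) = x^d P(1/x) = -(x^d + x - 1)$, which has the same Galois group and mirrors the factorization pattern of $P$ (since $x^2 - x + 1$ is self-reciprocal). I would then adapt Selmer's argument for $x^n - x - 1$: if $x^d + x - 1 = f(x) g(x)$ is a nontrivial monic factorization over $\mathbb{Z}$, the constant term $-1$ forces $|f(0)| = |g(0)| = 1$, so the product of the roots of $f$ has absolute value exactly $1$. A root $z$ of $x^d + x - 1$ with $|z| = 1$ must satisfy $|1 - z| = |z|^d = 1$, hence $z = e^{\pm i \pi/3}$; and these are genuine roots only when $d \equiv 5 \pmod{6}$. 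For the remaining roots (off the unit circle), I would use the equation $|z|^d = |1 - z|$ together with the explicit shape of the other coefficients of $x^d + x - 1$ (for instance the vanishing of the $x^{d-1}$ coefficient, giving $\sum \alpha_i = 0$) to show that no nontrivial subset of the non-cyclotomic roots has product of absolute value $1$, excluding any further factorization.

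For the Galois group, once the degree-$n$ irreducible factor is identified ($n = d$ or $n = d - 2$), the Galois group $G$ is a transitive subgroup of $S_n$. I would locate a transposition in $G$ by finding a prime $p$ at which the reduction modulo $p$ factors as one irreducible quadratic times $n - 2$ distinct linear factors, so that the Frobenius at $p$ is a transposition. I would then establish primitivity, for instance by showing the discriminant is not a square (so $G \not\subseteq A_n$) and by exhibiting a Frobenius of prime cycle length greater than $n/2$, which precludes any proper block decomposition. Jordan's classical theorem, that a primitive subgroup of $S_n$ containing a transposition equals $S_n$, then finishes the proof. The main obstacle is uniformity over all $d \ge 6$: verifying the required Frobenius cycle types for individual $d$ is routine, but a uniform argument likely requires either a Chebotarev-style density input using information about the splitting field, or an appeal to general results on Galois groups of trinomials (e.g., Osada or Schinzel) adapted to this specific family.
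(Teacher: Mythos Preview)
Your treatment of the cyclotomic factor and the reduction to Selmer for irreducibility matches the paper's approach; the paper simply cites Selmer rather than sketching the unit-circle argument, but the content is the same.

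For the Galois group, however, your primary strategy diverges from the paper and runs into exactly the uniformity obstacle you flag. You propose to locate a transposition by finding, for each $d$, a prime $p$ at which the reduction has cycle type $(2,1,\ldots,1)$, and separately a prime giving a long cycle for primitivity. The paper avoids this search entirely: it invokes Osada's criterion (his Lemmas 1--6), which says that if an irreducible $f$ of degree $n$ has the property that at every prime $p$ where $f$ has a repeated root mod $p$, that repeated root is unique and has multiplicity exactly $2$, then $\mathrm{Gal}(f) \cong S_n$. The transposition is produced by \emph{inertia} at a ramified prime, not by Frobenius at an unramified one, so no search is needed. Verifying the criterion for $g(x)=x^d-x^{d-1}-1$ (hence for its irreducible factor) is a two-line computation: a double root mod $p$ must satisfy $g'(\theta)=0$, forcing $\theta \equiv (d-1)/d$, so it is unique; and $g''(\theta) \not\equiv 0$, so the multiplicity is exactly $2$. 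This argument is uniform in $d$.

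So your fallback suggestion (``appeal to Osada, adapted'') is in fact the paper's actual proof, and the adaptation needed for the reducible case $d\equiv 5 \pmod 6$ is precisely the double-root computation above applied to $g$ and inherited by its degree-$(d-2)$ factor. Your proposed route through explicit Frobenius cycle types and Chebotarev is not wrong in principle, but it is strictly harder and you correctly identify that it does not yield a uniform proof without further input.
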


\begin{proof}

For a proof of irreducibility, we refer to a paper of Selmer \cite{Selmer_1956} on the irreducibility of trinomials. When $d \neq 5 \mod 6$, our  polynomials are equal to the reciprocal of $f(-x)$, where $f$ is one of the polynomials in his Thereom 1 (which one depends on the parity of $d$), which Selmer proves are irreducible if $d \neq 2 \mod 3$. When $d = 5 \mod 6$, our polynomial is the reciprocal of $f(-x)$ where $x$ is the second polynomial of this theorem, which he proves is a product of $(x^2+x+1)$ and an irreducible factor whenever $d \equiv 2 \mod 3$. This completes the proof of the ``irreducibility" part of our statement.

For the the Galois group computations, we refer to a paper of Osada \cite{osada1987} on the Galois groups of trinomials. Whenever $x^d-x^{d-1}-1$ is irreducible, it satisfies the assumptions of his Theorem 1, which implies its Galois group is $S_d$. 

Osada's paper doesn't study the case in which the trinomial is reducible, but his results can be adapted to our needs. Together, his Lemmas 1 to 6 prove that if an irreducible polynomial $f$ of degree $n$ has the property that for each prime $p$ for which $f$ has a double root $\mod p$ this root is the only double root and also has multiplicity exactly $2$, then the Galois group of $f$ is isomorphic to $S_n$. So it suffices to prove the irreducible factor $f$ of $g(x) = x^d-x^{d-1}-1$ has this property.

Indeed, let $\theta$ be a double root of $f$. Then it is a double root of $g(x) = x^d-x^{d-1}-1$, so it is a root of $g'(x) = d x^{d-1} - (d-1)x^{d-2}$, hence the root must be $\theta = \frac{d-1}{d} \mod p$ (since $0$ is not a root of the original polynomial). It already follows that $f$ may have no other double roots mod p. And because this not a root $\mod p$ of the second derivative of $g$, it follows $\theta$ has multiplicity exactly two in $g$. This means $\theta$ it is a root of multiplicity exactly $2$ of $f$ as well, which completes our proof.

\end{proof}

From these inputs we may deduce that the big automorphisms of the splitting field $K$ of $x^d-x^{d-1}-1$ (or of its irreducible factor in the case it is reducible) generate its full Galois group for $d \ge 6$.

Indeed, if $d \ge 6$ and $d\neq 5 \mod 6$ there are at least two roots $\alpha_1$ and $\alpha_2$ outside the unit disk. But the set of permutations that fix $\alpha_1$ or $\alpha_2$, or send one to another generate $S_d$:  we can generate any permutation by first flipping $\alpha_1$ with its final position (if it goes to something other than $\alpha_2$ this automorphism is big because it fixes $\alpha_2$, and if it goes to $\alpha_2$ it is big because it takes $\alpha_1$ to $\alpha_2$), and once $\alpha_1$ is in its target position we can the rest as necessary (this is a big automorphism because it fixes $\alpha_1$).

In the case $d \equiv 5 \mod 6$ with $d\ge 6$ we get at least two roots of absolute value strictly larger than $1$, which must both be roots of the irreducible factor (since $x^2-x+1$ has roots of absolute value $1$). The same argument as in the $d\neq 5 \mod 6$ for the roots of the irreducible factor shows that we the big automorphisms generate $S_{d-2}$ subgroup of the Galois group. With this we may conclude:

\begin{proposition}

Let $d\ge 6$ and let $a_n$ be the $d$-th generalization of the Narayana sequence. If $\alpha$ is a real number with $\|\alpha a_n\| \rightarrow 0$, then $\alpha$ is an integer.

\end{proposition}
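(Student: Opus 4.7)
The plan is to use Theorem \ref{alpharecurence} to first pin $\alpha$ down to $\mathbb{Q}$, and then to upgrade $\alpha \in \mathbb{Q}$ to $\alpha \in \mathbb{Z}$ by a direct divisibility argument exploiting the invertibility of the recurrence. If $d \not\equiv 5 \pmod 6$, the characteristic polynomial $P(x) = x^d - x^{d-1} - 1$ is irreducible with Galois group $S_d$, and the preceding discussion shows that its big automorphisms generate all of $S_d$. Since $a_n$ is a nonzero integer sequence with characteristic polynomial $P$, Theorem \ref{alpharecurence} immediately forces $\alpha$ to lie in the fixed field $K^{S_d} = \mathbb{Q}$.

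For the case $d \equiv 5 \pmod 6$, the polynomial factors as $P = (x^2 - x + 1)\, g(x)$ with $g$ irreducible of degree $d-2$, and Theorem \ref{alpharecurence} does not apply to $P$ directly. I would kill the extraneous quadratic factor by setting $b_n = a_{n+2} - a_{n+1} + a_n$. Since the shift operator satisfies $g(S)(S^2 - S + 1) = P(S)$ and $P(S) a_n = 0$, the sequence $b_n$ is an integer sequence with characteristic polynomial $g$. The triangle inequality gives $\|\alpha b_n\| \to 0$, and the initial values $a_1 = 1, a_2 = 2, a_3 = 3$ give $b_1 = 2 \ne 0$, so $b_n$ is nonzero. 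Applying Theorem \ref{alpharecurence} to $b_n$ with the irreducible polynomial $g$, and using that the big automorphisms of $g$ generate $S_{d-2}$, again forces $\alpha \in \mathbb{Q}$.

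To finish, I would write $\alpha = p/q$ in lowest terms. Since $\alpha a_n$ is rational with denominator dividing $q$, the condition $\|\alpha a_n\| \to 0$ forces $\alpha a_n \in \mathbb{Z}$ for all $n \ge N$, i.e.\ $q \mid a_n$ for $n \ge N$. Rewriting the recurrence as $a_{n-d} = a_n - a_{n-1}$ lets me propagate this divisibility backward: from $q$ dividing every $a_n$ with $n \ge N$ I get $q \mid a_m$ for every $m \ge N+1-d$, and iterating this step reaches all the way down to $a_1 = 1$, forcing $q = 1$.

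The main obstacle is the reducible case $d \equiv 5 \pmod 6$, since Theorem \ref{alpharecurence} is only stated for irreducible characteristic polynomials. The shift-operator trick bypasses this cleanly by projecting $a_n$ onto its component attached to the irreducible factor $g$; the only nontrivial check is that the resulting sequence $b_n$ is genuinely nonzero, which is immediate from the explicit initial data. A minor alternative to the final step would be to invoke Theorem \ref{integrality} (using $P(0) = -1$) to conclude $\alpha$ is an algebraic integer and hence an integer, but the direct denominator argument is more transparent and avoids worrying about whether $\alpha$ satisfies the property for every sequence satisfying $P$ or just for ours.
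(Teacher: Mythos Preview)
Your proof is correct and follows the same approach as the paper. The only difference is cosmetic: in the reducible case $d \equiv 5 \pmod 6$ you kill the quadratic factor with the operator $S^2 - S + 1$ (setting $b_n = a_{n+2} - a_{n+1} + a_n$), whereas the paper uses $S^3 + 1$ (setting $b_n = a_n + a_{n+3}$, noting that the roots of $x^2 - x + 1$ satisfy $x^3 = -1$); both annihilate the spurious roots and leave a nonzero integer sequence with characteristic polynomial $g$, and the remaining steps---applying Theorem~\ref{alpharecurence} to $g$ and then the backward-recurrence denominator argument to pass from $\mathbb{Q}$ to $\mathbb{Z}$---are identical.
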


\begin{proof}

In the case $d\neq 5 \mod 6$ we simply apply Theorem \ref{alpharecurence} to obtain that $\alpha$ belongs to the splitting field $K$ of $x^d-x^{d-1}-1$ and that $\sigma(\alpha) = \alpha$ for each big automorphism of $\alpha$, which implies $\alpha \in \mathbb{Q}$ since we argued these automorphisms generate $Aut(K/\mathbb{Q})$. And $\alpha$ actually has to be an integer, because if $\alpha$ is a rational with denominator $q$ and $\|\alpha a_n\|  \rightarrow 0$ this implies that eventually every term of the sequence $a_n$ is a multiple of $q$. Working backwards, this implies that every term of the sequence is a multiple of $q$, including $a_1=1$. It follows $\alpha \in \mathbb{Z}$.

In the case $d = 5 \mod 6$, notice that the roots of the factor $(x^2-x+1)$ each satisfy $x^3+1 = 0$. So the sequence $b_n = (a_n+a_{n+3})$ neutralizes those roots, hence it satisfies a linear recurrence whose characteristic polynomial is the remaining factor, which is irreducible. The $b_n$ are also non-zero since the $a_n$ are eventually positive. We may then apply Theorem \ref{alpharecurence} to $b_n$, which must also satisfy $\|\alpha b_n\|\rightarrow 0$ , which yields $\sigma(\alpha) = \alpha$ for each big automorphism of its splitting field. Again, since they generate the whole automorphism group this implies $\alpha \in \mathbb{Q}$, and then $\alpha \in \mathbb{Z}$.

\end{proof}

With this lemma it follows that if $\alpha$ is irrational, then for each integer $d \neq 0$ sequence $\|d \alpha a_i \|$ doesn't converge to zero. An application of Theorem \ref{uniform} completes the proof.

\printbibliography{}

\textit{E-mail address:} \texttt{rsangelo@stanford.edu}

\end{document}